\renewcommand*\env@matrix[1][*\c@MaxMatrixCols c]{%
 \hskip -\arraycolsep
 \let\@ifnextchar\new@ifnextchar
 \array{#1}}
\newtheorem{coro}{Corollary}
\newtheorem{defi}{Definition}
\newtheorem{teo}{Theorem}
\newtheorem{pro}{Proposition}
\newcommand{\ii}{\operatorname{i}}
\renewcommand{\d}{\operatorname{d}}
\newcommand{\Exp}[1]{\operatorname{e}^{#1}}
\newcommand{\C}{\mathbb{C}}
\newcommand{\N}{\mathbb{N}}
\newcommand{\R}{\mathbb{R}}
\def\@settitle{\begin{center}%
 \baselineskip14\p@\relax
 \bfseries
 \uppercasenonmath\@title
 \@title
 \ifx\@subtitle\@empty\else
 \\[1ex]\uppercasenonmath\@subtitle
 \footnotesize\mdseries\@subtitle
 \fi
 \end{center}%
}
\def\subtitle#1{\gdef\@subtitle{#1}}
\def\@subtitle{}
\title[Matrix Biorthogonality]{Riemann--Hilbert Problem for the \\
Matrix Laguerre Biorthogonal Polynomials: Eigenvalue Problems and the Matrix Discrete Painlevé IV}
\subjclass[2010]{33C45, 33C47, 42C05, 47A56.}
 \keywords{Riemann--Hilbert problems; matrix Pearson equations; 
 matrix biorthogonal polynomials; discrete integrable systems; non--Abelian discrete Painlev\'e IV equation}
\author[A Branquinho]{Amílcar Branquinho$^\dag$}
\address{$^\dag$Departamento de Matem\'atica,
Universidade de Coimbra, 3001-454 Coimbra, Portugal}
\email{ajplb@mat.uc.pt}
\author[A Foulqué]{Ana Foulqui\'e Moreno$^\maltese$}
\address{$^\maltese$Departamento de Matemática, Universidade de Aveiro, 3810-193 Aveiro, Portugal}
\email{foulquie@ua.pt}
\author[M Mañas]{Manuel Mañas$^\ddag$}
\address{$^\ddag$Departamento de Física Teórica, Universidad Complutense de Madrid, 28040-Madrid, Spain \&
Instituto de Ciencias Matematicas (ICMAT), Campus de Cantoblanco UAM, 28049-Madrid, Spain}
\email{manuel.manas@ucm.es}
\thanks{$^\dag$Acknowledges Centro de Matemática da Universidade de Coimbra (CMUC) -- UID/MAT/00324/2019, funded by the Portuguese Government through FCT/MEC and co-funded by the European Regional Development Fund through the Partnership Agreement PT2020}
\thanks{$^\maltese$Acknowledges CIDMA Center for Research and Development in Mathematics and Applications (University of Aveiro) and the 
Portuguese Foundation for Science and Technology (FCT)
within project UID/MAT/04106/2019}
\thanks{$^\ddag$Thanks financial support from the Spanish ``Agencia Estatal de Investigación'' research project [PGC2018-096504-B-C33], \emph{Ortogonalidad y Aproximación: Teoría y Aplicaciones en Física Matemática}.}
\begin{document}

\maketitle

\begin{abstract}
In this paper the Riemann--Hilbert problem, with jump supported on a appropriate curve on the complex plane with a finite endpoint at the origin, is used for the study of corresponding matrix biorthogonal polynomials associated with Laguerre type matrices of weights ---which are constructed in terms of a given matrix Pearson equation. First and second order differential systems for the fundamental matrix, solution of the mentioned Riemann--Hilbert problem are derived.
An explicit and general example is presented to illustrate the theoretical results of the work.
Related matrix eigenvalue problems for second order matrix differential operators and non-Abelian extensions of a family of discrete Painlev\'e~IV equations are discussed.
\end{abstract}

\section{Introduction}

Krein~\cite{Krein1,Krein2} was the first to discuss matrix extensions of real orthogonal polynomials, some relevant papers that appear afterwards on this subject are~\cite{bere},~\cite{geronimo} and more recently~\cite{nikishin}. 
The Russian mathematicians Aptekarev and Nikishin~\cite{nikishin} made a remarkable finding: 
for a kind of discrete Sturm--Liouville operators they solved the scattering problem and proved that the
polynomials that satisfy 
\begin{align*}
xP_{k}(x)&=A_{k}P_{k+1}(x)+B_{k}P_{k}(x)+A_{k-1}^{*}P_{k-1}(x),& k&=0,1,\ldots,
\end{align*}
are orthogonal with respect to a positive definite matrix measure,~i.e. they derived a matrix Favard theorem.
Later, it was found that matrix orthogonal polynomials (MOP) sometimes satisfy properties as do the classical orthogonal polynomials.
For example, for matrix versions of Laguerre, Hermite and Jacobi polynomials, the scalar-type Rodrigues' formula
\cite{duran_1,duran20052} and a second order differential equation~\cite{borrego,duran_3,duran2004} has been discussed. It also has been proven in~\cite{duran_5} that operators of the form
$D={\partial}^{2}F_{2}(t)+{\partial}^{1}F_{1}(t)+{\partial}^{0}F_{0}$ have as eigenfunctions different infinite families of~MOP's. 
In ~\cite{cum,carlos2} matrix extensions of the generalized polynomials considered in~\cite{adler-van-moerbeke,adler-vanmoerbeke-2} were studied.
Recently, in~\cite{nuevo}, the Christoffel transformation to matrix orthogonal polynomials in the real line (MOPRL)
have been extended and a new matrix Christoffel formula was obtained. 
Finally, in~\cite{nuevo2,nuevo3} more general transformations ---of Geronimus and Uvarov type--- where also considered.

Fokas, Its and Kitaev~\cite{FIK} found, in the context of 2D quantum gravity, that certain Riemann--Hilbert problem was solved in terms of orthogonal polynomials in the real line (OPRL). They found that the solution of a $2\times 2$
Riemann--Hilbert problem can be expressed in terms of orthogonal polynomials in the real line and its Cauchy transforms. Later, Deift and Zhou combined these ideas with a non-linear steepest descent analysis in a series of papers~\cite{deift1,deift2,deift3,deift4} which was the seed for a large activity in the field. To mention just a few relevant results let us cite the study of strong asymptotic with applications in random matrix theory~\cite{deift1,deift5}, the analysis of determinantal point processes~\cite{daems2,daems3,kuijlaars2,kuijlaars3}, orthogonal Laurent polynomials~\cite{McLaughlin1, McLaughlin2} and Painlev\'e equations~\cite{dai,kuijlaars4}.

 Recursion coefficients for orthogonal polynomials and its properties is a subject of current interest. See~\cite{VAssche,VAssche2} for a review on how the form of the weight and its properties translates to the recursion coefficients. Freud~\cite{freud0} has studied weights in $\R$ of exponential variation $w(x)=|x|^\rho\exp(-|x|^m)$, $\rho>-1$ and $m>0$. When $m=2,4,6$ he constructed relations among them as well as determined its asymptotic behavior. The role of the discrete Painlev\'e~I in this context was discovered later by Magnus~\cite{magnus}. For a weight of the form $w(\theta)=\exp(k\cos\theta)$, $k\in\mathbb R$, on the unit circle it was 
 found~\cite{Periwal0,Periwal} the discrete Painlev\'e~II equation for the recursion relations of the corresponding orthogonal polynomials, see also~\cite{hisakado} for a connection with the Painlev\'e~III equation.
 The discrete Painlev\'e~II was found in~\cite{baik0} using the Riemann--Hilbert problem given in~\cite{baik}, see also~\cite{tracy}. For a nice account of the relation of these discrete Painlev\'e equations and integrable systems see~\cite{cresswell}, and for a survey on the subject of differential and discrete Painlev\'e equations (cf.~\cite{Clarkson_1}). We also mention the recent paper~\cite{clarkson} where
a discussion on the relationship between the recurrence coefficients of orthogonal polynomials with respect to a semiclassical Laguerre weight and classical solutions of the fourth Painlev\'e equation can be found. Also, in~\cite{Clarkson_2} the solution of the discrete alternate Painlev\'e equations is presented in terms of the Airy~function.

In~\cite{CM} the Riemann--Hilbert problem
for this matrix situation and the appearance of non-Abelian discrete versions of Painlev\'e~I were explored, showing singularity confinement~\cite{CMT}, see also~\cite{GIM}. The
singularity analysis for a matrix discrete version of the Painlev\'e~I
equation was performed. It was found that the singularity 
confinement holds 
generically, i.e. in the whole space of parameters except possibly for algebraic subvarieties. 
The situation was considered in~\cite{Cassatella_3} for the matrix extension of the Szeg\H{o} polynomials in the unit circle and corresponding non-Abelian versions discrete Painlev\'e~II equations.


In~\cite{BFM} we have discussed matrix biorthogonal polynomials with matrix of weights $W(z)$ such that
\begin{itemize}
 \item The support of $W(z)$ is a non-intersecting smooth curve on the complex plane with no finite end points,~i.e. its end points occur at $\infty$.
 \item Weight matrix entries were, in principle, H\"older
continuous, and eventually requested to have holomorphic extensions to the complex plane.
 \item The matrix of weights $ W(z)$ is regular,~i.e., 
 $\det \big[ W_{j+k} \big]_{j,k=0, \ldots n}\not = 0 $, $n \in \N:=\{0,1,\ldots\} $, where the \emph{moment of order $n $}, $W_n$, associated with $ W$ is, for each $n\in\N$, given~by,
$\displaystyle W_n := \frac{1} {2\pi\ii} \int_\gamma z^n W (z) \d z$.
\end{itemize}
We obtained Sylvester systems of differential equations for the orthogonal polynomials and its second kind functions, directly from a Riemann--Hilbert problem, with jumps supported on appropriate curves on the complex plane. We considered a Sylvester type differential Pearson equation for the matrix of weights.
We also studied whenever the orthogonal polynomials and its second kind functions are solutions of a second order linear differential operators with matrix eigenvalues. This was done by stating an appropriate boundary value problem for the matrix of weights.
In particular, special attention was paid to non-Abelian Hermite biorthogonal polynomials in the real line, understood as those whose matrix of weights is a solution of a Sylvester type Pearson equation with given matrices of degree one polynomials coefficients. We also found nonlinear equations for the matrix coefficients of the corresponding three term relations, which extend to the non-commutative case the discrete Painlevé I and the alternate discrete Painlevé I equations.

In this paper we do a similar study but with more relaxed conditions, namely of Laguerre~type.
\begin{defi}[Laguerre type Matrix of weights] \label{def:laguerre weights}
We say that a regular matrix of weights $W= 
\begin{bsmallmatrix}
W^{(1,1)} & \cdots & W^{(1,N)} \\
\vdots & \ddots & \vdots \\
W^{(N,1)} & \cdots & W^{(N,N)} 
\end{bsmallmatrix}
\in \C^{ N\times N}$ is of Laguerre type if
\begin{itemize}
\item The support of 
 $W(z)$ 
is a non self-intersecting smooth curve on the complex plane with an end point at $0$ and the other end point at $\infty$, and such that it intersects the circles $|z|=R$, $R \in \R^+$, once and only once ({i.e.}, it can be taken as a determination curve for $\arg z$).
\item The entries $W^{(j,k)}$ of the matrix measure $W$ can be written as
\begin{align}\label{eq:the_weights}
W^{(j,k)}(z)& = \sum_{m \in I_{j,k}} h_m(z) z^{\alpha_{m}} \log^{p_{m}} z,& z&\in\gamma,
\end{align} 
where $I_{j,k}$ denotes a finite set of indexes, $ \alpha_{m} > -1$, $p_{m} \in \mathbb{N} \cup \{ 0 \} $ and $h_m(z)$ is H\"older continuous, bounded and non-vanishing on $\gamma$. 
Here the determination of logarithm and the powers are taken along $\gamma$. We will request, in the development of the theory, that the functions $h_m$ have a holomorphic extension to the whole complex plane.
\end{itemize}
\end{defi}

In this work, for the sake of simplicity, the finite end point of the curve $\gamma$ is taken at the origin, $c=0$, with no loss of generality, as a similar arguments apply for $c\neq 0$. In~\cite{duran2004} different examples of Laguerre matrix weights for the matrix orthogonal polynomials on the real line are studied.

\subsection{Matrix biorthogonal polynomials }

Given a Laguerre type matrix of weights as in Definition~\ref{def:laguerre weights} we introduce corresponding \emph{sequences of matrix monic polynomials}, the \emph{sequence of left matrix orthogonal polynomials}
$\big\{ {P}_n^{\mathsf L} (z)\big\}_{n\in\N} $ and the \emph{sequence of right matrix orthogonal polynomials} $\big\{ P_n^{\mathsf R}(z)\big\}_{n\in\N} $ characterized by the conditions,
\begin{align} \label{eq:ortogonalidadL}
\frac 1 {2\pi\ii} \int_\gamma {P}_n^{\mathsf L} (z) W (z) z^k \d z
= \delta_{n,k} C_n^{-1} , \\
\label{eq:ortogonalidadR}
\frac 1 {2\pi\ii} \int_\gamma z^k W (z) {P}_n^{\mathsf R} (z) \d z
= \delta_{n,k} C_n^{-1} , 
\end{align}
for $k = 0, 1, \ldots , n $ and $n \in \mathbb N$,
where $C_n $ is an nonsingular matrix.
The matrix of weights $W(z)$ induces a sesquilinear form in the set of matrix 
polynomials $\C^{N\times N}[z]$ given by
\begin{align}\label{eq:sesquilinear}
\langle P,Q \rangle_W :=\frac 1 {2\pi\ii} \int_\gamma {P} (z) W (z) Q(z) \d z,
\end{align}
for which $\big\{ P_n^{\mathsf L}(z)\big\}_{n\in\N}$ and $\big\{ P_n^{\mathsf R}(z)\big\}_{n\in\N}$ are biorthogonal 
\begin{align*} 
\big\langle P_n^{\mathsf L}, {P}_m^{\mathsf R} \big\rangle_ W
& = \delta_{n,m} C_n^{-1}, & n , m & \in \mathbb N .
\end{align*}
As the polynomials are chosen to be monic, we can write
\begin{align*}
{P}_n^{\mathsf L} (z) &= I_N z^n + p_{\mathsf L,n}^1 z^{n-1} + p_{\mathsf L,n}^2 z^{n-2} + \cdots + p_{\mathsf L,n}^n , \\
{P}_n^{\mathsf R} (z) &= I_N z^n + p_{\mathsf R,n}^1 z^{n-1} + p_{\mathsf R,n}^2 z^{n-2} + \cdots + p_{\mathsf R,n}^n , 
\end{align*}
with matrix coefficients $p_{\mathsf L, n}^k , p_{\mathsf R, n}^k \in 
\C^{N\times N} $, $k = 0, \ldots, n $ and $n \in \mathbb N $ (imposing that $p_{\mathsf L,n}^0 = p_{\mathsf R,n}^0=I $, $n \in \mathbb N $). Here $I_N\in\C^{N\times N}$ denotes the identity matrix.

We define the \emph{sequence of second kind matrix functions} by
\begin{align*} 
Q^{\mathsf L}_n (z) &
:
= \frac1{ 2 \pi \ii} \int_\gamma \frac{P^{\mathsf L}_n (z')}{z'-z} { W (z')} \d z^\prime , 
\\ 
{Q}_n^{\mathsf R} (z) &
:
= \frac{1}{2\pi \ii} \int_\gamma W (z') \frac{P^{\mathsf R}_{n} (z')}{z'-z} \d z' ,
\end{align*} 
for $ n \in \mathbb N$.
From the orthogonality conditions~\eqref{eq:ortogonalidadL} 
and~\eqref{eq:ortogonalidadR} we have, for all $n \in \N$, the fol\-low\-ing asymptotic expansion near infinity 
\begin{align*} 
Q^{\mathsf L}_n (z) 
& = - C_n^{-1} \big( I_N z^{-n-1} + q_{\mathsf L,n}^1 z^{-n-2} + \cdots \big), & |z|&\to\infty,
\\
Q^{\mathsf R}_n (z) 
& = - \big( I_N z^{-n-1} + q_{\mathsf R,n}^1 z^{-n-2} + \cdots \big)C_n^{-1} , & |z|&\to\infty . 
\end{align*}


The layout of the paper is as follows. In \S 2 we give a brief introduction 
to Riemann--Hilbert problem for matrix biorthogonal polynomials deriving  the three term recurrence relation, discussing the  Pearson--Laguerre matrix weights with a finite end point and introducing  constant jump fundamental matrix and the important  structure matrix. Then, in \S 3 we give an explicit example of Laguerre matrix weight and in \S 4 we apply these ideas to  differential relations and eigenvalue problems for second order matrix differential operators of Laguerre type. Then, in \S 5 we end the paper with the finding of a matrix extension of an instance of the discrete Painlevé IV equation.

\section{Riemann--Hilbert problem for Matrix Biorthogonal Polynomials} \label{sec:2}

\subsection{The Riemann--Hilbert problem}


We begin this section stating a general theorem on Riemann--Hilbert problem for the Laguerre general weights. A preliminary version of this can be found in~\cite{BFM2}.
\begin{teo} \label{teo:LRHP}
 Given a regular Laguerre type matrix of weights $W(x)$ with support on $\gamma$ we have:
\begin{itemize}
\item[i)] The matrix function 
\begin{align*} 
Y^{\mathsf L}_n (z) & : = 
\begin{bmatrix}
 {P}^{\mathsf L}_{n} (z) & Q^{\mathsf L}_{n} (z) \\[.05cm]
-C_{n-1} {P}^{\mathsf L}_{n-1} (z) & -C_{n-1} Q^{\mathsf L}_{n-1} (z)
\end{bmatrix}
\end{align*}
is, for each $n \in \N$, the unique solution of the Riemann--Hilbert problem, which consists in the determination of a $2N \times 2 N$ complex matrix function such~that:

\hangindent=.9cm \hangafter=1
{\noindent}$\phantom{ol}${\rm (RHL1):} $ Y_n^{\mathsf L} (z)$ is holomorphic in $\C \setminus \gamma $.

\hangindent=.9cm \hangafter=1
{\noindent}$\phantom{ol}${\rm (RHL2):} Has the following asymptotic behavior near infinity,
\begin{align*} 
Y_n^{\mathsf L} (z) = \Big( I_N + \sum_{j=1}^{\infty} (z^{-j})Y_n^{j,\mathsf L} \Big)
\begin{bmatrix}
I_N z^n & {0}_N \\
{0}_N & I_N z^{-n} 
\end{bmatrix} .
\end{align*}

\hangindent=.9cm \hangafter=1
{\noindent}$\phantom{ol}${\rm (RHL3):} Satisfies the jump condition
\begin{align*} 
\big( Y^{\mathsf L}_n (z) \big)_+ &
= \big( Y^{\mathsf L}_n (z) \big)_- \,
\begin{bmatrix}
I_N & W (z) \\ 
{0}_N & I_N 
\end{bmatrix}, &z &\in \gamma .
\end{align*}

\hangindent=.9cm \hangafter=1
{\noindent}$\phantom{ol}${\rm (RHL4):} 
$
Y^{\mathsf L}_n (z) 
= \begin{bmatrix}
\operatorname{O} (1) & s^{\mathsf L}_{1}(z) \\[.05cm]
\operatorname{O} (1) & s^{\mathsf L}_{2}(z) 
\end{bmatrix} $, 
as $ z \to 0$, and
$
\displaystyle 
\lim_{z \to 0} z s^{\mathsf L}_j(z) = 0_N
$, 
$j=1,2$ and the $ \operatorname{O} $ conditions are understood entrywise.

\item[ii)] The matrix function 
\begin{align*}
{Y}^{\mathsf R}_n (z) &
 : = 
\begin{bmatrix}
P^{\mathsf R}_{n} (z) & - P^{\mathsf R}_{n-1} (z) C_{n-1} \\[.05cm]
{Q}^{\mathsf R}_{n} (z) & - {Q}^{\mathsf R}_{n-1} (z) C_{n-1}
\end{bmatrix}
\end{align*}
is, for each $n \in \N$, the unique solution of the Riemann--Hilbert problem, which consists in the determination of a $2N \times 2 N$ complex matrix function such~that:
 
\hangindent=.9cm \hangafter=1
{\noindent}$\phantom{ol}${\rm (RHR1):} $ Y_n^{\mathsf R} (z) $ is holomorphic in $\C \setminus \gamma $.
 
\hangindent=.9cm \hangafter=1
{\noindent}$\phantom{ol}${\rm (RHR2):} Has the following asymptotic behavior near infinity,
\begin{align*} 
Y_n^{\mathsf R} (z) = 
\begin{bmatrix}
I_N z^n & {0}_N \\ 
{0}_N & I_N z^{-n} 
\end{bmatrix}
\Big( I_N + \sum_{j=1}^{\infty} (z^{-j})Y_n^{j,\mathsf R} \Big) .
\end{align*}
 
\hangindent=.9cm \hangafter=1
{\noindent}$\phantom{ol}${\rm (RHR3):} Satisfies the jump condition
\begin{align*} 
\big( Y^{\mathsf R}_n (z) \big)_+ &
=
\begin{bmatrix}
 I_N & {0}_N \\ 
 W (z) & I_N 
\end{bmatrix} \big( Y^{\mathsf R}_n (z) \big)_- , &z &\in \gamma .
\end{align*} 

\hangindent=.9cm \hangafter=1
{\noindent}$\phantom{ol}${\rm (RHR4):} $
Y^{\mathsf R}_n (z) 
= 
\begin{bmatrix}
\operatorname{O} (1) & \operatorname{O} (1) \\ 
s^{\mathsf R}_1(z) & s^{\mathsf R}_2(z) 
\end{bmatrix} $, 
as $ z \to 0$, and
$\displaystyle \lim_{z \to 0} z s^{\mathsf R}_j (z) =0_N$, 
$j=1,2$ and the $ \operatorname{O} $ 
conditions are understood entrywise.

\item[iii)] The determinant of $ Y^{\mathsf L}_n (z) $ and $ Y^{\mathsf R}_n (z) $ are both equal to $1$, for every $ z \in \mathbb{C}$.
\end{itemize}
\end{teo}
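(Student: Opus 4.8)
The plan is to run the Fokas--Its--Kitaev construction adapted to the finite endpoint at the origin, carrying out the left case $Y^{\mathsf L}_n$ in detail; the right case $Y^{\mathsf R}_n$ is then obtained by interchanging the roles of rows and columns throughout, and iii) is read off from the explicit expressions. First I would check that the matrix $Y^{\mathsf L}_n$ displayed in i) satisfies (RHL1)--(RHL4). Holomorphy off $\gamma$ is clear, since $P^{\mathsf L}_{n},P^{\mathsf L}_{n-1}$ are entire and $Q^{\mathsf L}_{n},Q^{\mathsf L}_{n-1}$ are Cauchy transforms supported on $\gamma$. The behaviour at infinity (RHL2) follows from monicity, which gives $P^{\mathsf L}_n(z)=z^n(I_N+\operatorname{O}(z^{-1}))$ and $-C_{n-1}P^{\mathsf L}_{n-1}(z)=\operatorname{O}(z^{n-1})$, together with the expansions of $Q^{\mathsf L}_n,Q^{\mathsf L}_{n-1}$ at infinity recorded just before \S\ref{sec:2}, which give $-C_{n-1}Q^{\mathsf L}_{n-1}(z)=z^{-n}(I_N+\operatorname{O}(z^{-1}))$ and $Q^{\mathsf L}_n(z)=\operatorname{O}(z^{-n-1})$; multiplying $Y^{\mathsf L}_n(z)$ on the right by $\operatorname{diag}(z^{-n}I_N,z^{n}I_N)$ turns these four expansions into $I_{2N}+\operatorname{O}(z^{-1})$ with a full series in $z^{-1}$, which is the claim. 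The jump (RHL3) is exactly the Sokhotski--Plemelj relation $(Q^{\mathsf L}_n)_+-(Q^{\mathsf L}_n)_-=P^{\mathsf L}_nW$ on $\gamma$ (and the same with $n-1$), the polynomial columns being continuous across $\gamma$.

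The only genuinely new ingredient, compared with the no-endpoint situation of~\cite{BFM}, is (RHL4), and this is where Definition~\ref{def:laguerre weights} enters. The first $N$ columns of $Y^{\mathsf L}_n$ are polynomials, hence $\operatorname{O}(1)$ as $z\to0$. For the last $N$ columns one has to control a Cauchy transform at the endpoint: near $0$ the integrand $P^{\mathsf L}_n(z')W(z')$ is a finite sum of terms $\tilde h_m(z')(z')^{\alpha_m}\log^{p_m}z'$ with $\alpha_m>-1$ and $\tilde h_m$ H\"older near $0$, so the classical estimates for Cauchy integrals with a H\"older--power endpoint yield a bound of the form $\operatorname{O}\big(|z|^{\alpha_{\min}}|\log z|^{p_{\max}}\big)+\operatorname{O}(1)$ with $\alpha_{\min}=\min_m\alpha_m>-1$; in particular $zQ^{\mathsf L}_n(z)\to0_N$ as $z\to0$, and likewise for $-C_{n-1}Q^{\mathsf L}_{n-1}$, which is precisely (RHL4). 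I expect the bulk of the work to sit here: splitting $\gamma$ into a short sub-arc at $0$ and its complement, and keeping track of the logarithmic powers. The same estimates give that $(\mathcal C W)(z):=\tfrac{1}{2\pi\ii}\int_\gamma\tfrac{W(z')}{z'-z}\,\d z'$ satisfies $z(\mathcal C W)(z)\to0_N$ as $z\to0$ (equivalently, $W$ is integrable at the endpoint), a fact I will use twice below.

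For iii) I would use the explicit form. Adding and subtracting $P^{\mathsf L}_n(z)$ inside the Cauchy kernel gives, in a punctured neighbourhood of $0$, $Q^{\mathsf L}_n(z)=P^{\mathsf L}_n(z)(\mathcal C W)(z)+H_1(z)$ with $H_1(z):=\tfrac{1}{2\pi\ii}\int_\gamma\tfrac{P^{\mathsf L}_n(z')-P^{\mathsf L}_n(z)}{z'-z}W(z')\,\d z'$ a matrix polynomial in $z$ (the difference quotient is a polynomial in $z$ and $z'$ and all moments of $W$ are finite), and similarly $-C_{n-1}Q^{\mathsf L}_{n-1}(z)=-C_{n-1}P^{\mathsf L}_{n-1}(z)(\mathcal C W)(z)+H_2(z)$ with $H_2$ a matrix polynomial. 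Hence $Y^{\mathsf L}_n(z)=M_n(z)\begin{bsmallmatrix}I_N&(\mathcal C W)(z)\\0_N&I_N\end{bsmallmatrix}$ near $0$, with $M_n(z)=\begin{bsmallmatrix}P^{\mathsf L}_n(z)&H_1(z)\\-C_{n-1}P^{\mathsf L}_{n-1}(z)&H_2(z)\end{bsmallmatrix}$ holomorphic at $0$; therefore $\det Y^{\mathsf L}_n=\det M_n$ is holomorphic at $0$. On the other hand $\det Y^{\mathsf L}_n$ is holomorphic in $\C\setminus\gamma$ by (RHL1), and (RHL3) together with $\det\begin{bsmallmatrix}I_N&W\\0_N&I_N\end{bsmallmatrix}=1$ gives $(\det Y^{\mathsf L}_n)_+=(\det Y^{\mathsf L}_n)_-$ on $\gamma\setminus\{0\}$, so $\det Y^{\mathsf L}_n$ extends across $\gamma\setminus\{0\}$ and, with the previous remark, is entire; since (RHL2) forces $\det Y^{\mathsf L}_n(z)\to1$ as $z\to\infty$, Liouville's theorem gives $\det Y^{\mathsf L}_n\equiv1$. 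The same computation handles $Y^{\mathsf R}_n$.

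Finally, uniqueness. Let $Z$ be any solution of (RHL1)--(RHL4). The matrix $Z(z)\begin{bsmallmatrix}I_N&-(\mathcal C W)(z)\\0_N&I_N\end{bsmallmatrix}$ has no jump across $\gamma$ near $0$ (the jump of $\mathcal C W$ is exactly $W$), and by (RHL4) for $Z$ together with $z(\mathcal C W)(z)\to0_N$ it is $\operatorname{o}(z^{-1})$ there, hence holomorphic at $0$; thus $Z$ admits the same local factorisation $Z(z)=\tilde M_n(z)\begin{bsmallmatrix}I_N&(\mathcal C W)(z)\\0_N&I_N\end{bsmallmatrix}$ with $\tilde M_n$ holomorphic at $0$. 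Then $X:=Z(z)\big(Y^{\mathsf L}_n(z)\big)^{-1}=\tilde M_n(z)M_n(z)^{-1}$ near $0$, and $M_n$ is invertible there because $\det M_n\equiv1$, so $X$ is bounded at $0$; away from $0$ the common jump of $Z$ and $Y^{\mathsf L}_n$ cancels, so $X$ is holomorphic on $\C\setminus\{0\}$, hence entire, while (RHL2) for $Z$ and for $Y^{\mathsf L}_n$ gives $X(z)\to I_{2N}$ as $z\to\infty$; Liouville's theorem then yields $X\equiv I_{2N}$, i.e.\ $Z=Y^{\mathsf L}_n$. The case $Y^{\mathsf R}_n$ is the transpose of this argument. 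In summary, the sole real obstacle is the endpoint analysis behind (RHL4) and the integrability estimate for $\mathcal C W$ at $0$; once those H\"older--power Cauchy-integral estimates are secured, the rest is a determinant computation and two applications of Liouville's theorem.
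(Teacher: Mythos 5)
Your proposal is correct, and for (RHL1)--(RHL4) it follows the same path as the paper: standard Fokas--Its--Kitaev verification for the first three conditions, and the classical endpoint estimates for Cauchy integrals of H\"older--power--logarithmic densities (the paper simply cites Gakhov for the fact that $\lim_{z\to 0}z\phi_m(z)=0$) for the fourth. Where you genuinely diverge is in the proof of iii) and of uniqueness. The paper forms the single auxiliary function
\begin{align*}
G(z) = {Y}^{\mathsf L}_n (z)
\begin{bmatrix}
0_N&I_N\\
-I_N &0_N
\end{bmatrix}
{Y}^{\mathsf R}_n(z)
\begin{bmatrix}
0_N&-I_N\\
I_N &0_N
\end{bmatrix},
\end{align*}
shows it has no jump, a removable singularity at the origin (via $zG(z)\to 0$), and value $I_{2N}$ at infinity, so $G\equiv I_{2N}$ by Liouville; this one computation simultaneously yields the explicit inverse of $Y^{\mathsf L}_n$ in terms of $Y^{\mathsf R}_n$ (recycled as Corollary~\ref{teo:RRHPinverse}) and uniqueness of both problems, but it couples the left and right problems and, strictly speaking, only gives $\det Y^{\mathsf L}_n\cdot\det Y^{\mathsf R}_n=1$ rather than each determinant separately. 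You instead prove $\det Y^{\mathsf L}_n\equiv 1$ directly via the local factorisation $Y^{\mathsf L}_n=M_n\cdot\begin{bsmallmatrix}I_N&\mathcal C W\\0_N&I_N\end{bsmallmatrix}$ near the origin (with $M_n$ a matrix of polynomials, which requires finiteness of all moments of $W$ --- guaranteed here by regularity), and then run the standard $X=Z\,(Y^{\mathsf L}_n)^{-1}$ Liouville argument, again using the local factorisation to tame the origin. Your route treats the left problem autonomously and actually delivers part iii) more completely than the paper's argument does, at the cost of the extra factorisation lemma and of not producing the inverse formula of Corollary~\ref{teo:RRHPinverse} as a by-product. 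Both arguments are sound; just make sure the endpoint estimate $z\,(\mathcal C W)(z)\to 0_N$ and the Plemelj jump for $\mathcal C W$ are justified for densities of the form \eqref{eq:the_weights}, exactly as the paper does by appeal to Gakhov and Muskhelishvili.
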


\begin{proof}
Using the standard calculations from the scalar case it follows that the matrices $Y^{\mathsf L}_n$ and $Y^{\mathsf R}_n$ satisfy
${\rm (RHL1)}$--${\rm (RHL3)} $ and ${\rm (RHR1)}$--${\rm (RHR3)} $ respectively. 

The entries $W^{j,k}$ of the matrix measure $W$ are given in~\eqref{eq:the_weights}.
It holds (cf.~\cite{gakhov}) that in a neighborhood of the origin the Cauchy transform
\begin{align*}
\phi_{m}(z) = \frac{1}{2 \pi \ii} \int_{\gamma} \frac{ p(\zeta) h_m(\zeta) \zeta^{\alpha_{m}} \log^{p_{m}} \zeta}{\zeta-z} \d \zeta,
\end{align*}
where $p(\zeta)$ denotes any polynomial in $\zeta$,
satisfies
$
\displaystyle
\lim_{z \to 0} z \phi_{m}(z) =0
$.
Then, ${\rm (RHL4)}$ and ${\rm (RHR4)}$ are fulfilled by the matrices $Y^{\mathsf L}_n,Y^{\mathsf R}_n$, respectively.
Now, let us consider the matrix~function
\begin{align*}
G(z) = {Y}^{\mathsf L}_n (z) 
\begin{bmatrix}
0_N&I_N\\[.05cm]
-I_N &0_N
\end{bmatrix}
{Y}^{\mathsf R}_n(z)
\begin{bmatrix}
0_N&-I_N\\[.05cm]
I_N &0_N
\end{bmatrix}
 .
\end{align*}
It can easily be proved that $G(z)$ has no jump or discontinuity on the curve $\gamma$ and that its behavior at the end point $c$ is given by
\begin{align*}
G(z) &
= \begin{bmatrix}
 \operatorname{O}(1) s^{\mathsf L}_1(z) + \operatorname{O} (1)s^{\mathsf R}_2(z) & \operatorname{O}(1) s^{\mathsf L}_1(z) + \operatorname{O}(1) s^{\mathsf R}_1(z) \\[.15cm]
 \operatorname{O} (1)s^{\mathsf L}_2(z) + \operatorname{O} (1) s^{\mathsf R}_2(z) & \operatorname{O} (1) s^{\mathsf L}_2(z) + \operatorname{O} (1) s^{\mathsf R}_1(z)
\end{bmatrix} , & 
z&\to 0,
\end{align*}
so it holds that
$
\displaystyle \lim_{z \to 0} z G(z) = 0
$
and we conclude that the origin is a removable singularity of $G$. 
Now, from the behavior for $ z \to \infty$,
\begin{align*}
G(z) = 
\begin{bmatrix}
I_N z^n & {0}_N \\
{0}_N & I_N z^{-n} 
\end{bmatrix}
\begin{bmatrix}
0_N&I_N\\[.05cm]
-I_N &0_N
\end{bmatrix}
 \begin{bmatrix}
 I_N z^n & {0}_N \\ 
 {0}_N & I_N z^{-n} 
 \end{bmatrix}
\begin{bmatrix}
0_N&-I_N\\[.05cm]
I_N &0_N
\end{bmatrix} = \begin{bmatrix}
I_N&0_N\\[.05cm]
0_N &I_N
\end{bmatrix},
\end{align*}
hence the Liouville theorem implies that $G(z)=I_{2N}$, and the uniqueness of the solution of these Riemann--Hilbert follow.
\end{proof}

Consequences of the previous result and the proof given for it follow.
\begin{coro}
 \label{teo:RRHPinverse}
It holds~that
\begin{align*}
\big( {Y}^{\mathsf L}_n (z)\big)^{-1} = 
\begin{bmatrix}
0_N&I_N\\[.05cm]
-I_N &0_N
\end{bmatrix}
 {Y}^{\mathsf R}_n(z)
\begin{bmatrix}
0_N&-I_N\\[.05cm]
I_N &0_N
\end{bmatrix},
\end{align*}
%
%
%
that entrywise read as follows
\begin{align}
Q_{n}^{\mathsf L} (z) P_{n-1}^{\mathsf R} (z)
 - P_{n}^{\mathsf L} (z) Q_{n-1}^{\mathsf R} (z) & = C_{n-1}^{-1} , 
 \\
P_{n-1}^{\mathsf L} (z) Q_{n}^{\mathsf R} (z)
 - Q_{n-1}^{\mathsf L} (z) P_{n}^{\mathsf R} (z) & = C_{n-1}^{-1} , 
 \\
 Q_{n}^{\mathsf L} (z) P_{n}^{\mathsf R} (z)
 - P_{n}^{\mathsf L} (z) Q_{n}^{\mathsf R} (z) & = 0 
 .
\end{align}
\end{coro}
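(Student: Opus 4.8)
The plan is to read everything off the identity $G(z)=I_{2N}$ that was already established inside the proof of Theorem~\ref{teo:LRHP}. Recall that there one introduces
\begin{align*}
G(z) = {Y}^{\mathsf L}_n (z)
\begin{bmatrix}
0_N&I_N\\
-I_N &0_N
\end{bmatrix}
{Y}^{\mathsf R}_n(z)
\begin{bmatrix}
0_N&-I_N\\
I_N &0_N
\end{bmatrix},
\end{align*}
checks that it has no jump across $\gamma$, that the origin is a removable singularity, and that $G(z)\to I_{2N}$ as $z\to\infty$, so that Liouville's theorem forces $G\equiv I_{2N}$. By part~iii) of the theorem, $\det Y^{\mathsf L}_n(z)=1$ for every $z$, hence $Y^{\mathsf L}_n(z)$ is invertible; reading $G(z)=I_{2N}$ as a factorization with $Y^{\mathsf L}_n(z)$ on the left and multiplying through by $\big(Y^{\mathsf L}_n(z)\big)^{-1}$ gives immediately the claimed closed form for the inverse. (Equivalently, since $\big[\begin{smallmatrix} 0_N&I_N\\ -I_N&0_N\end{smallmatrix}\big]^{-1}=\big[\begin{smallmatrix} 0_N&-I_N\\ I_N&0_N\end{smallmatrix}\big]$, the two symplectic-type blocks are mutually inverse and $G\equiv I_{2N}$ is literally the statement $\big(Y^{\mathsf L}_n\big)^{-1}=\big[\begin{smallmatrix} 0_N&I_N\\ -I_N&0_N\end{smallmatrix}\big]Y^{\mathsf R}_n\big[\begin{smallmatrix} 0_N&-I_N\\ I_N&0_N\end{smallmatrix}\big]$.)

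For the entrywise relations I would simply expand this in $N\times N$ blocks. Using the explicit block form of $Y^{\mathsf R}_n$ one finds
\begin{align*}
\begin{bmatrix}
0_N&I_N\\
-I_N &0_N
\end{bmatrix}
 {Y}^{\mathsf R}_n(z)
\begin{bmatrix}
0_N&-I_N\\
I_N &0_N
\end{bmatrix}
=
\begin{bmatrix}
-Q^{\mathsf R}_{n-1}(z)\,C_{n-1} & -Q^{\mathsf R}_{n}(z)\\
P^{\mathsf R}_{n-1}(z)\,C_{n-1} & P^{\mathsf R}_{n}(z)
\end{bmatrix},
\end{align*}
and then I would impose $Y^{\mathsf L}_n(z)\big(Y^{\mathsf L}_n(z)\big)^{-1}=I_{2N}$ after substituting the block form of $Y^{\mathsf L}_n$. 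The $(1,1)$ block yields $\big(Q^{\mathsf L}_{n}P^{\mathsf R}_{n-1}-P^{\mathsf L}_{n}Q^{\mathsf R}_{n-1}\big)C_{n-1}=I_N$, which is the first identity after cancelling the invertible $C_{n-1}$ on the right; the $(2,2)$ block yields $C_{n-1}\big(P^{\mathsf L}_{n-1}Q^{\mathsf R}_{n}-Q^{\mathsf L}_{n-1}P^{\mathsf R}_{n}\big)=I_N$, i.e.\ the second identity; the $(1,2)$ block gives $Q^{\mathsf L}_{n}P^{\mathsf R}_{n}-P^{\mathsf L}_{n}Q^{\mathsf R}_{n}=0_N$, the third; and the $(2,1)$ block gives the third identity with $n$ replaced by $n-1$, so it is automatically consistent.

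I do not expect any genuine obstacle: the statement is a purely algebraic rearrangement of the fact $G\equiv I_{2N}$ proved for Theorem~\ref{teo:LRHP}, and the only points demanding care are the order of the matrix factors and the side on which the constants $C_{n-1}$ appear, since $\C^{N\times N}$ is noncommutative. As a consistency cross-check one may instead expand $\big(Y^{\mathsf L}_n\big)^{-1}Y^{\mathsf L}_n=I_{2N}$; both routes use only the two block representations together with $G\equiv I_{2N}$ and must produce the same three relations.
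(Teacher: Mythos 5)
Your proposal is correct and is exactly the route the paper intends: the corollary is stated as a consequence of the identity $G(z)\equiv I_{2N}$ established via Liouville's theorem in the proof of Theorem~\ref{teo:LRHP}, and the entrywise relations follow from the block expansion of $Y^{\mathsf L}_n(z)\big(Y^{\mathsf L}_n(z)\big)^{-1}=I_{2N}$, with the placement of the $C_{n-1}$ factors as you computed. No discrepancy with the paper's (implicit) argument.
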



\subsection{Three term recurrence relation}

Following standard arguments we find
\begin{align*}
Y_{n+1}^{\mathsf L} (z) & 
 = T^{\mathsf L}_n (z) Y^{\mathsf L}_n (z),&
 T^{\mathsf L}_n (z) &
 := 
\begin{bmatrix}
z I_N - \beta_n^{\mathsf L} & C_{n}^{-1} \\[.05cm]
-C_{n} & {0}_N
\end{bmatrix},
& 
n &\in \mathbb N,
\end{align*}
where $ T^{\mathsf L}_n$
denotes 
the left \emph{transfer matrix}.
For the right orthogonality, we similarly obtain,
\begin{align*}
Y_{n+1}^{\mathsf R} (z)&
= Y^{\mathsf R}_n (z) T^{\mathsf R}_n (z) , &
T^{\mathsf R}_n (z)&
:= 
\begin{bmatrix}
z I_N - \beta_n^{\mathsf R} & -C_{n} \\[.05cm]
C_{n}^{-1} & {0}_N
\end{bmatrix}, &  n &\in \mathbb N,
\end{align*}
where $ T^{\mathsf L}_n$
denotes 
the right \emph{transfer matrix}.

Hence, we conclude that the sequence of monic polynomials $\big\{ {P}^{\mathsf L}_n (z)\big\}_{n\in\N} $ satisfies the three term recurrence relations
\begin{align*} 
z {P}^{\mathsf L}_n (z) = {P}^{\mathsf L}_{n+1} (z) + \beta^{\mathsf L}_n {P}^{\mathsf L}_{n} (z) + \gamma^{\mathsf L}_n {P}^{\mathsf L}_{n-1} (z), 
&& n \in \mathbb N, 
\end{align*}
with recursion coefficients given by
$\beta^\mathsf L_n :=
p_{\mathsf L,n}^1 - p_{\mathsf L,n+1}^1$ and $\gamma^{\mathsf L}_n := C_n^{-1} C_{n-1}$,
 with initial conditions, $ {P}^{\mathsf L}_{-1} = 0_N, {P}^{\mathsf L}_{0} = I_N $.
 Analogously, 
 \begin{align*} 
 z P^{\mathsf R}_n (z) = P^{\mathsf R}_{n+1} (z) + P^{\mathsf R}_{n} (z) \beta^{\mathsf R}_n + P^{\mathsf R}_{n-1} (z) \gamma^{\mathsf R}_n ,
 && n \in\N,
\end{align*}
where $\beta_n^{\mathsf R} := C_n \beta^{\mathsf L}_n C_n^{-1}$ and $\gamma^{\mathsf R}_n := C_n \gamma^{\mathsf L}_nC_n^{-1} = C_{n-1} C_n^{-1}$.

\subsection{Pearson--Laguerre matrix weights with a finite end point}

Instead of a given matrix of weights we consider two matrices of entire functions, say $h^{\mathsf L}(z)$ and $h^{\mathsf R}(z)$, such that the following matrix Pearson equations are satisfied
\begin{align}\label{eq:partial_Pearson_L}
z (W^\mathsf L)^{\prime}(z) = h^{\mathsf L}(z) W^{\mathsf L}(z), &&
 z (W^\mathsf R)^{\prime}(z)
 = W^{\mathsf R}(z) h^{\mathsf R}(z) , 
\end{align}
and, given solutions to them, we construct the corresponding matrix of weights as $ W= W^\mathsf L W^\mathsf R$. This matrix of weights is also characterized by a Pearson equation,

\begin{pro}[Pearson differential equation] 
\label{prop:Pearson}
Given two matrices of entire functions $h^{\mathsf L}(z)$ and $h^{\mathsf R}(z)$, any solution of the Sylvester type matrix differential equation, which we call Pearson equation for the~weight,
\begin{align}\label{eq:Pearson}
z W^{\prime}(z)=h^{\mathsf L} (z) W(z)+ W(z) h^{\mathsf R} (z),
\end{align} 
is of the form $ W= W^\mathsf L W^\mathsf R$ where the factor matrices $ W^\mathsf L$ and $ W^\mathsf R$ are solutions of~\eqref{eq:partial_Pearson_L}.
\end{pro}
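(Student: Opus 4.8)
The plan is to show two things: first, that any solution $W$ of the Sylvester–Pearson equation \eqref{eq:Pearson} can be factored as $W=W^{\mathsf L}W^{\mathsf R}$ with the factors satisfying \eqref{eq:partial_Pearson_L}; and conversely (implicitly, since the proposition asserts "is of the form"), that such a product always solves \eqref{eq:Pearson}. The converse is the easy direction: given $W^{\mathsf L},W^{\mathsf R}$ solving \eqref{eq:partial_Pearson_L}, the Leibniz rule gives $z(W^{\mathsf L}W^{\mathsf R})' = z(W^{\mathsf L})'W^{\mathsf R} + W^{\mathsf L}z(W^{\mathsf R})' = h^{\mathsf L}W^{\mathsf L}W^{\mathsf R} + W^{\mathsf L}W^{\mathsf R}h^{\mathsf R} = h^{\mathsf L}W + Wh^{\mathsf R}$, which is \eqref{eq:Pearson}. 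I would state this first as the trivial half.

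For the main direction, I would start by fixing one factor. Pick any invertible solution $W^{\mathsf L}$ of the left equation $z(W^{\mathsf L})' = h^{\mathsf L}W^{\mathsf L}$ — existence away from $z=0$ follows from standard linear ODE theory (the equation $(W^{\mathsf L})' = z^{-1}h^{\mathsf L}(z)W^{\mathsf L}$ has holomorphic coefficients on $\gamma$ minus the origin, and a fundamental matrix solution is invertible). Then, given a solution $W$ of \eqref{eq:Pearson}, define $W^{\mathsf R} := (W^{\mathsf L})^{-1}W$. The claim is that this $W^{\mathsf R}$ solves the right Pearson equation. I would compute $z(W^{\mathsf R})'$ using the product rule on $(W^{\mathsf L})^{-1}W$, together with the identity $z\big((W^{\mathsf L})^{-1}\big)' = -(W^{\mathsf L})^{-1}\big(z(W^{\mathsf L})'\big)(W^{\mathsf L})^{-1} = -(W^{\mathsf L})^{-1}h^{\mathsf L}$. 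This yields
\begin{align*}
z(W^{\mathsf R})' &= z\big((W^{\mathsf L})^{-1}\big)'W + (W^{\mathsf L})^{-1}zW' \\
&= -(W^{\mathsf L})^{-1}h^{\mathsf L}W + (W^{\mathsf L})^{-1}\big(h^{\mathsf L}W + Wh^{\mathsf R}\big) \\
&= (W^{\mathsf L})^{-1}Wh^{\mathsf R} = W^{\mathsf R}h^{\mathsf R},
\end{align*}
so $W^{\mathsf R}$ indeed satisfies the right equation in \eqref{eq:partial_Pearson_L}, and by construction $W = W^{\mathsf L}W^{\mathsf R}$.

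The main obstacle is not algebraic — the computation above is a two-line manipulation — but rather making precise the regularity and domain issues: the factors $W^{\mathsf L},W^{\mathsf R}$ need not be entire even when $h^{\mathsf L},h^{\mathsf R}$ are, because of the $z^{-1}$ singularity at the origin, so one should be careful to state that the factorization holds on $\gamma$ (or on a simply connected domain containing $\gamma\setminus\{0\}$) and that invertibility of $W^{\mathsf L}$ is available there. One should also note the non-uniqueness of the factorization: replacing $W^{\mathsf L}\mapsto W^{\mathsf L}A$ and $W^{\mathsf R}\mapsto A^{-1}W^{\mathsf R}$ for any constant invertible $A$ (more generally, any $A$ with $zA'=0$) preserves both the product and the two Pearson equations, so the proposition should be read as an existence statement for the factors. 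I would close by remarking that in the applications one chooses the factors so that $W=W^{\mathsf L}W^{\mathsf R}$ reproduces the prescribed Laguerre-type form \eqref{eq:the_weights}.
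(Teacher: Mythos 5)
Your proposal is correct and follows essentially the same route as the paper: the Leibniz rule for the easy direction, and for the converse picking an invertible solution $W^{\mathsf L}$ of the left equation and verifying that $(W^{\mathsf L})^{-1}W$ satisfies the right equation. You simply spell out the computation and the domain/invertibility caveats that the paper leaves implicit.
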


\begin{proof}
 Given solutions $ W^\mathsf L$ and $ W^\mathsf R$ of~\eqref{eq:partial_Pearson_L}, 
 it follows immediately, just using the Leibniz law for derivatives, that $ W= W^\mathsf L W^\mathsf R$ fulfills~\eqref{eq:Pearson}. Moreover, given a solution $ W$ of~\eqref{eq:Pearson} we pick a solution $ W^\mathsf L$ of the first equation in~\eqref{eq:partial_Pearson_L}, then it is easy to see that $( W^\mathsf L)^{-1} W$ satisfies the second equation in~\eqref{eq:partial_Pearson_L}.
\end{proof}

We can give the fol\-low\-ing result from the literature~\cite{wasow}.
\begin{teo}[Solution at a regular singular point] 
Let the matrix function $h^{\mathsf L}(z) $ be entire. Then, for the solutions of the Pearson equation \eqref{eq:partial_Pearson_L} we have:
\begin{enumerate}
	\item If~$A^{\mathsf L}:=h^{\mathsf L}(0)$ has no eigenvalues that differs from each other by positive integers then, the solution of the left matrix differential equation in~\eqref{eq:partial_Pearson_L}
can be written as 
\begin{align*}
 W^{\mathsf L}(z) = H^{\mathsf L}(z) z^{A^{\mathsf L}} W_0^{\mathsf L},
\end{align*}
where 
$
H^{\mathsf L}(z)$ is an entire and nonsingular matrix function such that
$H^{\mathsf L}(0) =I_N$, and $W_0^{\mathsf L}$ is a constant nonsingular matrix.
 \\

\item If the matrix function
$A^{\mathsf L}$ has eigenvalues that differs from each other by positive integers, then the solution of the left matrix differential equation in~\eqref{eq:partial_Pearson_L} can be written as
\begin{align*}
 W^{\mathsf L}(z) = H^{\mathsf L}(z) z^{\tilde A^{\mathsf L}} W_0^{\mathsf L},
\end{align*}
where, in this case,
\begin{align*}
H^{\mathsf L}(z) = \tilde{S}^{\mathsf L}(z) \Pi^{\mathsf L}(z) ,
\end{align*}
and $ \tilde{S}^{\mathsf L}(z) $ is a finite product of factors of the form $T_i S_i^{\mathsf L}(z)$, with $T_i$
a nonsingular matrix and~$S_i^{\mathsf L}(z)$ 
is a shearing matrix,~{i.e.}, a matrix given by blocks as
\begin{align*}
S_i^{\mathsf L}(z) = 
\begin{bmatrix}
 I_{n_i} & 0\\
 0 & z I_{m_i}
\end{bmatrix} , 
\end{align*}
for some positive integers $n_i, m_i$, and
$\Pi^{\mathsf L}(z)$ is an entire and non singular matrix function such that
$\Pi^{\mathsf L}(0) =I$, $\tilde A^{\mathsf L}$ is a constant matrix built from the matrix $A^{\mathsf L}$, where the eigenvalues of this matrix are decreased in such a way that the eigenvalues of the resulting matrix do not differ by a positive integer and $W_0^{\mathsf L}$ is a constant nonsingular matrix.
\end{enumerate}
\end{teo}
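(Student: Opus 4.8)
The plan is to treat this as the classical Frobenius--Fuchs theory of a first order linear system at a regular singular point, following Wasow~\cite{wasow}; since only the superscript $\mathsf L$ is involved I suppress it below. Because $h^{\mathsf L}$ is entire, write $h^{\mathsf L}(z)=\sum_{k\ge 0}A_kz^k$ with $A_0=A^{\mathsf L}$, and look for a solution of the left equation in~\eqref{eq:partial_Pearson_L} of the form $W(z)=H(z)\,z^{A^{\mathsf L}}W_0^{\mathsf L}$, $H(z)=\sum_{k\ge 0}H_kz^k$, $H_0=I_N$. Using that $z^{A^{\mathsf L}}$ commutes with $A^{\mathsf L}$, the equation for $W$ is equivalent to $zH'(z)=h^{\mathsf L}(z)H(z)-H(z)A^{\mathsf L}$, and comparing coefficients of $z^k$ gives, for $k\ge 1$,
\begin{align*}
kH_k-\ad_{A^{\mathsf L}}(H_k)=\sum_{j=1}^{k}A_jH_{k-j}.
\end{align*}
The first point to record is that the linear map $X\mapsto kX-\ad_{A^{\mathsf L}}(X)$ on $\C^{N\times N}$ is invertible exactly when $k$ is not an eigenvalue of $\ad_{A^{\mathsf L}}$, that is, when no two eigenvalues of $A^{\mathsf L}$ differ by the positive integer $k$.

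Under the hypothesis of part~(1) all these maps are invertible for $k\ge 1$, so the matrices $H_k$ are determined recursively and uniquely; convergence of $\sum_k H_kz^k$ on all of $\C$ then follows from a standard majorant estimate, using that the inverse maps have operator norm $\operatorname{O}(1/k)$ while $\sum_k\|A_k\|R^k<\infty$ for every $R>0$ since $h^{\mathsf L}$ is entire. This produces an entire $H^{\mathsf L}$ with $H^{\mathsf L}(0)=I_N$. To see $H^{\mathsf L}$ is nonsingular I would use the Abel--Liouville identity $z(\det W)'=\operatorname{tr}\!\big(h^{\mathsf L}(z)\big)\det W$: integrating near the origin yields $\det W(z)=z^{\operatorname{tr}A^{\mathsf L}}\big(\det W_0^{\mathsf L}\big)\Exp{g(z)}$ with $g$ entire, and comparing with $\det\!\big(H^{\mathsf L}(z)z^{A^{\mathsf L}}W_0^{\mathsf L}\big)=\det H^{\mathsf L}(z)\,z^{\operatorname{tr}A^{\mathsf L}}\det W_0^{\mathsf L}$ shows $\det H^{\mathsf L}(z)=\Exp{g(z)}$, which never vanishes. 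That settles part~(1).

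For part~(2) the obstruction is exactly the finite set of positive integers arising as differences of eigenvalues of $A^{\mathsf L}$, and the remedy is a finite chain of shearing transformations. At each stage I would choose a constant $T_i$ that block--diagonalises the current leading matrix so as to put in the lower block an eigenvalue that is maximal in its residue class modulo $\Z$ while still exceeding another eigenvalue of that class; the substitution $W=T_iS_i^{\mathsf L}(z)\widehat W$ with $S_i^{\mathsf L}(z)=\diag(I_{n_i},zI_{m_i})$ then transforms the system into one of the same type for $\widehat W$ whose leading coefficient is $S_i^{\mathsf L}(z)^{-1}T_i^{-1}h^{\mathsf L}(z)T_iS_i^{\mathsf L}(z)-\diag(0_{n_i},I_{m_i})$. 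The block--diagonalisation is precisely what keeps this coefficient holomorphic at $0$ (hence entire), and its effect at $z=0$ is to lower the isolated block of eigenvalues by one. Since the nonnegative integer $\sum_\lambda m_\lambda\big(\lambda-\mu(\lambda)\big)$ --- where $m_\lambda$ is the multiplicity of $\lambda$ and $\mu(\lambda)$ the smallest eigenvalue congruent to $\lambda$ modulo $\Z$ --- strictly decreases at each step, after finitely many steps the leading matrix is a $\tilde A^{\mathsf L}$ no two of whose eigenvalues differ by a positive integer. Applying part~(1) to this system and unwinding the substitutions gives $W^{\mathsf L}(z)=\tilde S^{\mathsf L}(z)\Pi^{\mathsf L}(z)z^{\tilde A^{\mathsf L}}W_0^{\mathsf L}$ with $\tilde S^{\mathsf L}$ a product of factors $T_iS_i^{\mathsf L}(z)$ and $\Pi^{\mathsf L}$ entire, nonsingular, $\Pi^{\mathsf L}(0)=I_N$, as claimed.

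I expect the main obstacle to be the bookkeeping of part~(2): verifying that the chosen invariant splitting really keeps the conjugated coefficient holomorphic at the origin, and checking that the monotone quantity above bounds the number of shearing steps. As this is entirely classical I would not reproduce it in full but cite Wasow~\cite{wasow}; the remaining pieces --- the recursion for $H_k$, the norm estimates ensuring convergence, and the determinant computation --- are routine.
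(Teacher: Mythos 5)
Your proposal is correct, and it reconstructs exactly the classical argument (Frobenius recursion with the operator $k\,I-\ad_{A^{\mathsf L}}$, majorant estimates, the Abel--Liouville determinant identity, and the shearing transformations $T_iS_i^{\mathsf L}(z)$ for resonant eigenvalue differences) that the paper itself does not reproduce but simply imports from Wasow~\cite{wasow}. Since the paper offers no proof of its own, there is nothing to contrast with; your sketch is the standard one and its steps are sound.
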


We can get analogous results for the 
right matrix differential equation in~\eqref{eq:partial_Pearson_L}
and we will denote the solution as
\begin{align*}
 W^{\mathsf R}(z) =W_0^{\mathsf R} z^{A^{\mathsf 
 R}} H^{\mathsf R}(z).
\end{align*}
 
The matrix of functions $z^A = \Exp{A \log z}$ is a matrix of holomorphic functions in $\mathbb{C} \setminus \gamma$, and
\begin{align*}
(z^A)_- &= (z^A)_+ \Exp{2 \pi i A} = \Exp{2 \pi i A} (z^A)_+ , & z &\in \gamma.
\end{align*}

We also adopt the convention that $(W^{\mathsf L}(z)W^{\mathsf R}(z) )_+= W(z)$,~i.e., the matrix of weight is obtained from the limit behavior of the right side of the curve $\gamma$ of the matrix function $W^{\mathsf L}(z)W^{\mathsf R}(z)$.

It is necessary, in other to consider the Riemann--Hilbert problem related to the matrix of weights $W$ satisfying~\eqref{eq:Pearson}, to study the behavior of $W(z)$ around the origin. For that aim, let us consider $J$, the Jordan matrix similar to the matrix $A$, so there exists an nonsingular matrix $P$ such that
$A = P J P^{-1}$.
It holds
$z^{A} = P z^{J} P^{-1}$
so if 
\begin{align*}
J = (\lambda_1I_{m_1}+ N_1) \oplus (\lambda_2 I_{m_2}+ N_2) \oplus \cdots \oplus (\lambda_s I_{m_s}+ N_s) 
\end{align*}
where $m_k$ is the order of the nilpotent matrix $N_k$, we have that
\begin{align*}
z^J = z^{ \lambda_1I_{m_1} + N_1 } \oplus z^{\lambda_2 I_{m_2} + N_2 } \oplus \cdots \oplus z^{\lambda_s I_{m_s}+ N_s }
\end{align*}
where
$z^{ \lambda_k I_{m_k} + N_k } = z^{ \lambda_kI_{m_k} } z^{ N_k }$.
It is straightforward that
$z^{ \lambda_k I_{m_k} } = z^{ \lambda_k} I_{m_k} $
and
\begin{align*}
z^{ N_k } = \Exp{ N_k \log z} = I_{m_k} + \log z N_k + \frac{ \log^2 z }{ 2! } N^2_k + \cdots + \frac{\log^{m_k-1} z}{(m_k-1)!}N^{m_k-1}_k ,
\end{align*}
where we have used the nilpotency of $ N^{j}_k = 0_N$ for $ j \geq m_k$, so we can conclude that the entries of $z^{A}$ are linear combinations of $z^{\lambda_j}$ 
with polynomials coefficients in the variable~$\log z$.
Hence, if we assume a real spectrum $\sigma(A)=\{\lambda_1,\ldots,\lambda_s\}\subset \R$ bounded from below by~$-1$, $\lambda_k>-1$, as well as the regularity of the matrix weight $W$, it holds that this matrix of weights is of Laguerre type and fulfills the conditions requested in Theorem~\ref{teo:LRHP}.

%

\subsection{Constant jump fundamental matrix}
According with the above notation and given a matrix of weights as described in~\eqref{eq:Pearson}, with spectra $\sigma(A^{\mathsf L})$ and $\sigma(A^{\mathsf R})$, both real and bounded from below by $-1$, we introduce the \emph{constant jump fundamental matrices} 
\begin{align} \label{eq:zn1} 
Z_n^{\mathsf L}(z) 
& : = Y^{\mathsf L}_n (z) 
\begin{bmatrix} 
W^{\mathsf L} (z) & 0_N \\ 
0_N & ( W^{\mathsf R} (z))^{-1} 
\end{bmatrix} , \\
\label{eq:zetan}
{Z}^{\mathsf R}_n (z)& : =
\begin{bmatrix} 
W^{\mathsf R} (z) & 0_N \\ 
0_N & ( W^{\mathsf L} (z))^{-1} 
\end{bmatrix}
{Y}^{\mathsf R}_n (z) , 
\end{align}
for $n \in \mathbb N$.

\begin{pro} \label{prop:zn}
The constant jump fundamental matrices~$Z^{\mathsf L}_n(z) $ and $Z^\mathsf R_n(z)$ satisfy, for each $n \in \N$, the fol\-low\-ing properties: 

\hangindent=.9cm \hangafter=1
{\noindent}$\phantom{ol}${\rm i)}
Are holomorphic on $\C \setminus \gamma$.

\hangindent=.9cm \hangafter=1
{\noindent}$\phantom{ol}${\rm ii)}
Present the fol\-low\-ing \emph{constant jump condition} on $\gamma$
\begin{align*} 
\big( Z^{\mathsf L}_n (z) \big)_+ &
= \big( Z^{\mathsf L}_n (z) \big)_- 
\begin{bmatrix} 
{(W_0^{\mathsf L}})^{-1}\Exp{- 2 \pi \ii A^{\mathsf L}} W_0^{\mathsf L}& {(W_0^{\mathsf L}})^{-1}\Exp{- 2 \pi \ii A^{\mathsf L}} W_0^{\mathsf L} \\ 
0_N & W_0^{\mathsf R}\Exp{2 \pi i A^{\mathsf R}} ({W_0^{\mathsf R}})^{-1}
\end{bmatrix},\\
\big( {Z}^{\mathsf R}_n (z) \big)_+ &
=
\begin{bmatrix} 
W_0^{\mathsf R}\Exp{-2 \pi i A^{\mathsf R}} ({W_0^{\mathsf R}})^{-1}& {0}_N \\ 
W_0^{\mathsf R}\Exp{-2 \pi i A^{\mathsf R}} ({W_0^{\mathsf R})}^{-1}& {W_0^{\mathsf L}}^{-1}\Exp{2 \pi \ii A^{\mathsf L}} W_0^{\mathsf L}
\end{bmatrix} 
\big( {Z}^{\mathsf R}_n (z) \big)_- ,
\end{align*}
for all $z\in\gamma$.
\end{pro}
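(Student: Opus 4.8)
The plan is to verify the three claimed properties of $Z_n^{\mathsf L}$ and $Z_n^{\mathsf R}$ directly from their definitions \eqref{eq:zn1}--\eqref{eq:zetan}, using the factorization $W = W^{\mathsf L} W^{\mathsf R}$ together with the explicit forms $W^{\mathsf L}(z) = H^{\mathsf L}(z) z^{A^{\mathsf L}} W_0^{\mathsf L}$ and $W^{\mathsf R}(z) = W_0^{\mathsf R} z^{A^{\mathsf R}} H^{\mathsf R}(z)$ provided by the solution-at-a-regular-singular-point theorem, and the monodromy relation $(z^A)_- = (z^A)_+ \Exp{2\pi\ii A}$. For part i), holomorphy of $Z_n^{\mathsf L}$ on $\C\setminus\gamma$ follows because $Y_n^{\mathsf L}$ is holomorphic there by (RHL1), the factors $H^{\mathsf L}$ and $H^{\mathsf R}$ are entire, $z^{A^{\mathsf L}}$ and $z^{A^{\mathsf R}}$ are holomorphic off $\gamma$, and $H^{\mathsf R}(z)$ is nonsingular (so $(W^{\mathsf R})^{-1} = (H^{\mathsf R})^{-1} z^{-A^{\mathsf R}} (W_0^{\mathsf R})^{-1}$ is also holomorphic off $\gamma$); the product of holomorphic matrix functions is holomorphic. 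The argument for $Z_n^{\mathsf R}$ is identical after transposing the order of factors.

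For part ii), I would compute $(Z_n^{\mathsf L})_+$ and $(Z_n^{\mathsf L})_-$ separately. Writing $Z_n^{\mathsf L}(z) = Y_n^{\mathsf L}(z)\,\mathrm{diag}\!\big(W^{\mathsf L}(z), (W^{\mathsf R}(z))^{-1}\big)$ and using the jump (RHL3), $\big(Y_n^{\mathsf L}\big)_+ = \big(Y_n^{\mathsf L}\big)_- \begin{bsmallmatrix} I_N & W \\ 0_N & I_N\end{bsmallmatrix}$, I get
\begin{align*}
\big(Z_n^{\mathsf L}\big)_+ &= \big(Y_n^{\mathsf L}\big)_- \begin{bmatrix} I_N & W_+ \\ 0_N & I_N\end{bmatrix} \begin{bmatrix} W^{\mathsf L}_+ & 0_N \\ 0_N & (W^{\mathsf R}_+)^{-1}\end{bmatrix}, \\
\big(Z_n^{\mathsf L}\big)_- &= \big(Y_n^{\mathsf L}\big)_- \begin{bmatrix} W^{\mathsf L}_- & 0_N \\ 0_N & (W^{\mathsf R}_-)^{-1}\end{bmatrix}.
\end{align*}
Thus $\big(Z_n^{\mathsf L}\big)_+ = \big(Z_n^{\mathsf L}\big)_- \begin{bsmallmatrix} W^{\mathsf L}_- & 0_N \\ 0_N & (W^{\mathsf R}_-)^{-1}\end{bsmallmatrix}^{-1} \begin{bsmallmatrix} W^{\mathsf L}_+ & W_+(W^{\mathsf R}_+)^{-1} \\ 0_N & (W^{\mathsf R}_+)^{-1}\end{bsmallmatrix}$, and I would show the resulting middle matrix equals the stated constant block matrix. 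The key substitutions are: $W^{\mathsf L}_+ = H^{\mathsf L} z^{A^{\mathsf L}}_+ W_0^{\mathsf L}$ and $W^{\mathsf L}_- = H^{\mathsf L} z^{A^{\mathsf L}}_+ \Exp{2\pi\ii A^{\mathsf L}} W_0^{\mathsf L}$, so $(W^{\mathsf L}_-)^{-1} W^{\mathsf L}_+ = (W_0^{\mathsf L})^{-1}\Exp{-2\pi\ii A^{\mathsf L}} W_0^{\mathsf L}$, with the $H^{\mathsf L}$ and $z^{A^{\mathsf L}}_+$ factors cancelling; similarly $W^{\mathsf R}_+ (W^{\mathsf R}_-)^{-1}$ manipulations give $W_0^{\mathsf R}\Exp{2\pi\ii A^{\mathsf R}}(W_0^{\mathsf R})^{-1}$ in the bottom-right, and the observation $W_+ (W^{\mathsf R}_+)^{-1} = W^{\mathsf L}_+$ (since $W = W^{\mathsf L}W^{\mathsf R}$ with the $+$ convention) feeds the off-diagonal block, producing the same $(W_0^{\mathsf L})^{-1}\Exp{-2\pi\ii A^{\mathsf L}} W_0^{\mathsf L}$ entry. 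The computation for $Z_n^{\mathsf R}$ mirrors this using (RHR3), with left and right multiplication interchanged.

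The main obstacle — really the only nonroutine point — is the careful bookkeeping of which boundary value ($+$ or $-$) each factor $H$, $z^A$ carries and of the order of matrix multiplication, so that all the $z$-dependent pieces ($H^{\mathsf L}(z)$, $H^{\mathsf R}(z)$, $z^{A^{\mathsf L}}_\pm$, $z^{A^{\mathsf R}}_\pm$) genuinely cancel and leave only constant matrices built from $W_0^{\mathsf L}$, $W_0^{\mathsf R}$, $\Exp{\pm 2\pi\ii A^{\mathsf L}}$, $\Exp{\pm 2\pi\ii A^{\mathsf R}}$. One subtlety to flag: in the shearing case of the theorem, $W^{\mathsf L}(z) = H^{\mathsf L}(z) z^{\tilde A^{\mathsf L}} W_0^{\mathsf L}$ with $H^{\mathsf L} = \tilde S^{\mathsf L}\Pi^{\mathsf L}$ not entire but still holomorphic and single-valued off $\gamma$ near the origin with the monodromy carried entirely by $z^{\tilde A^{\mathsf L}}$; since $\tilde A^{\mathsf L}$ and $A^{\mathsf L}$ differ by integer shifts, $\Exp{2\pi\ii \tilde A^{\mathsf L}}$ and $\Exp{2\pi\ii A^{\mathsf L}}$ coincide, so the stated jump matrices are unaffected — this should be remarked but needs no separate computation.
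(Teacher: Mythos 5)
Your proposal is correct and follows essentially the same route as the paper: holomorphy inherited from $Y_n^{\mathsf L}$, $z^{A}$ and the entire factors $H$, and the jump computed by conjugating (RHL3) with $\operatorname{diag}\big(W^{\mathsf L},(W^{\mathsf R})^{-1}\big)$ so that the $H$ and $(z^{A})_+$ factors cancel and only the monodromy conjugated by $W_0^{\mathsf L}$, $W_0^{\mathsf R}$ survives. The only (immaterial) divergence is that the paper obtains the jump of $Z_n^{\mathsf R}$ from the identity $Z^{\mathsf R}_n=\begin{bsmallmatrix}0&-I_N\\ I_N&0\end{bsmallmatrix}\big(Z_n^{\mathsf L}\big)^{-1}\begin{bsmallmatrix}0&I_N\\ -I_N&0\end{bsmallmatrix}$ rather than repeating the mirror computation with (RHR3), and your remark on the shearing case is a welcome addition the paper leaves implicit.
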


\begin{proof}
\hangindent=.9cm \hangafter=1
{\noindent}$\phantom{ol}${\rm i)}
 The holomorphic properties of $Z^{\mathsf L}_n$ are inherit from that of the fundamental matrices $Y_n^{\mathsf L}$ and $z^A$ and taking into account that $H^{\mathsf L}(z)$ is an entire matrix function.

\hangindent=.9cm \hangafter=1
{\noindent}$\phantom{ol}${\rm ii)} 
From the definition of $Z^{\mathsf L}_n (z)$ we have
\begin{align*}
\big(Z^{\mathsf L}_n (z) \big)_+ = \big( Y^{\mathsf L}_n (z) \big)_+ 
\begin{bmatrix}
(W^{\mathsf L} (z))_+ & 0_N \\ 
0_N & ( W^{\mathsf R} (z))_+^{-1} 
\end{bmatrix}
\end{align*}
and taking into account Theorem~\ref{teo:LRHP} we 
successively get
\begin{align*}
\small
\big(Z^{\mathsf L}_n (z) \big)_+ & 
\small 
= \big( Y^{\mathsf L}_n (z) \big)_-
\begin{bmatrix} 
I_N & (W^{\mathsf L} (z) W^{\mathsf R} (z))_+ \\ 
0_N & I_N
\end{bmatrix}
\begin{bmatrix} 
(W^{\mathsf L} (z))_+ & 0_N \\ 
0_N & ( W^{\mathsf R} (z))_+^{-1} 
\end{bmatrix}\\
&
\small = \big( Y^{\mathsf L}_n (z) \big)_-
\begin{bmatrix} 
(W^{\mathsf L} (z))_- & 0_N \\ 
0_N & ( W^{\mathsf R} (z))_-^{-1} 
\end{bmatrix}
\begin{bmatrix} 
(W^{\mathsf L} (z))_-^{-1} & 0_N \\ 
0_N & ( W^{\mathsf R} (z))_- 
\end{bmatrix}
\begin{bmatrix} 
(W^{\mathsf L} (z))_+ & (W^{\mathsf L} (z))_+ \\ 
0_N & ( W^{\mathsf R} (z))_+^{-1} 
\end{bmatrix}\\
&
\small
= 
\big(Z^{\mathsf L}_n (z) \big)_- 
\begin{bmatrix} 
(W^{\mathsf L} (z))_-^{-1} (W^{\mathsf L} (z))_+ &(W^{\mathsf L} (z))_-^{-1} (W^{\mathsf L} (z))_+ \\ 
0_N & W^{\mathsf R} (z)_- ( W^{\mathsf R} (z))_+^{-1} 
\end{bmatrix}
 \\
 &
\small 
=\big(Z^{\mathsf L}_n (z) \big)_-
\begin{bmatrix} 
({W_0^{\mathsf L}})^{-1}\Exp{- 2 \pi \ii A^{\mathsf L}} W_0^{\mathsf L}& ({W_0^{\mathsf L}})^{-1}\Exp{- 2 \pi \ii A^{\mathsf L}} C^{\mathsf L} \\ 
0_N & W_0^{\mathsf R}\Exp{2 \pi \ii A^{\mathsf L}} ({W_0^{\mathsf R}})^{-1}
\end{bmatrix},
\end{align*}
and we get the desired \emph{constant} jump condition for $Z^{\mathsf L}_n (z)$.

To complete the proof we only have to check that
\begin{align}\label{eq:ZLR}
Z^{\mathsf R}_n (z) =
\begin{bmatrix}
0 & -I_N \\ I_N & 0
\end{bmatrix}
(Z_n^{\mathsf L} (z))^{-1}
\begin{bmatrix}
0 & I_N \\ -I_N & 0
\end{bmatrix}.
\end{align}
Which is a consequence of~\eqref{eq:zetan}.
\end{proof}

 \subsection{Structure matrix and zero curvature formula}

In parallel to the matrices $Z^{\mathsf L}_n(z)$ and $Z^{\mathsf R}_n(z)$, for each factorization we introduce what we call \emph{structure matrices} given in terms of the \emph{left}, respectively \emph{right}, logarithmic derivatives by,
\begin{align} \label{eq:Mn}
M^{\mathsf L}_n (z) & 
 :
= \big(Z^{\mathsf L}_n\big)^{\prime} (z) \big(Z^{\mathsf L}_{n} (z)\big)^{-1},&
{M}^{\mathsf R}_n (z) & 
 :
= \big ({Z}^{\mathsf R}_{n} (z)\big)^{-1} \big(Z^{\mathsf R}_n\big)^{\prime} (z) .
\end{align}
It is not difficult to prove that
\begin{align*}
{M}^{\mathsf R}_{n}(z) &
 = -
\begin{bmatrix}
0 & -I_N \\ I_N & 0
\end{bmatrix}
M_n^{\mathsf L} (z)
\begin{bmatrix}
0 & I_N \\ -I_N & 0
\end{bmatrix} , &
n \in \mathbb N .
\end{align*}

\begin{pro}[\cite{BFM}] \label{prop:Mn}
\begin{enumerate}


\item 
The transfer matrices satisfy
 \begin{align*}
 T^{\mathsf L}_n (z)Z_n^{\mathsf L}(z) & = Z^{\mathsf L}_{n+1}(z) ,& {Z}^{\mathsf R}_n(z){T}^{\mathsf R}_n (z) &=
 {Z}^{\mathsf R}_{n+1}(z), & n \in \mathbb N .
 \end{align*}

\item 
The zero curvature formulas 
\begin{align*} 
\begin{bmatrix}
I_N & 0_N\\
0_N & 0_N
\end{bmatrix}
 & 
= M^{\mathsf L}_{n+1} (z) T^{\mathsf L}_n(z) - T^{\mathsf L}_n (z) M^{\mathsf L}_{n} (z),\\
\begin{bmatrix}
I_N & 0_N \\
0_N & 0_N
\end{bmatrix}
 & 
=
T^{\mathsf R}_n (z) \, M^{\mathsf R}_{n+1} (z) 
- M^{\mathsf R}_{n} (z) T^{\mathsf R}_n (z) , 
\end{align*}
$n \in \mathbb N $, are fulfilled.
\end{enumerate}
\end{pro}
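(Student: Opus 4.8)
The plan is to derive both items by direct manipulation, relying on the three term recurrence already recorded in \S\ref{sec:2} and on the elementary block structure of the transfer matrices; no heavy machinery is needed.

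For item (1) I would first note that the block--diagonal dressing factor entering the definition~\eqref{eq:zn1}, namely $D^{\mathsf L}(z):=\diag\big(W^{\mathsf L}(z),(W^{\mathsf R}(z))^{-1}\big)$, is independent of $n$, so that $Z^{\mathsf L}_n(z)=Y^{\mathsf L}_n(z)\,D^{\mathsf L}(z)$ for every $n\in\N$. Multiplying the recurrence $Y^{\mathsf L}_{n+1}(z)=T^{\mathsf L}_n(z)Y^{\mathsf L}_n(z)$ on the right by $D^{\mathsf L}(z)$ gives immediately $T^{\mathsf L}_n(z)Z^{\mathsf L}_n(z)=Y^{\mathsf L}_{n+1}(z)D^{\mathsf L}(z)=Z^{\mathsf L}_{n+1}(z)$. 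The other identity is proved the same way: with $D^{\mathsf R}(z):=\diag\big(W^{\mathsf R}(z),(W^{\mathsf L}(z))^{-1}\big)$ one has $Z^{\mathsf R}_n(z)=D^{\mathsf R}(z)Y^{\mathsf R}_n(z)$, and left-multiplying $Y^{\mathsf R}_{n+1}(z)=Y^{\mathsf R}_n(z)T^{\mathsf R}_n(z)$ by $D^{\mathsf R}(z)$ yields $Z^{\mathsf R}_n(z)T^{\mathsf R}_n(z)=Z^{\mathsf R}_{n+1}(z)$.

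For item (2) the key --- and essentially only --- structural input is that each transfer matrix is affine in $z$ with constant leading coefficient $\diag(I_N,0_N)$, i.e.\ $\big(T^{\mathsf L}_n\big)'(z)=\begin{bsmallmatrix}I_N & 0_N\\ 0_N & 0_N\end{bsmallmatrix}=\big(T^{\mathsf R}_n\big)'(z)$. Differentiating the identity $Z^{\mathsf L}_{n+1}(z)=T^{\mathsf L}_n(z)Z^{\mathsf L}_n(z)$ just obtained and then right-multiplying by $\big(Z^{\mathsf L}_{n+1}(z)\big)^{-1}=\big(Z^{\mathsf L}_n(z)\big)^{-1}\big(T^{\mathsf L}_n(z)\big)^{-1}$, the definition~\eqref{eq:Mn} of $M^{\mathsf L}_n$ gives $M^{\mathsf L}_{n+1}(z)=\big(T^{\mathsf L}_n\big)'(z)\big(T^{\mathsf L}_n(z)\big)^{-1}+T^{\mathsf L}_n(z)M^{\mathsf L}_n(z)\big(T^{\mathsf L}_n(z)\big)^{-1}$; multiplying on the right by $T^{\mathsf L}_n(z)$ and moving one term across produces $M^{\mathsf L}_{n+1}(z)T^{\mathsf L}_n(z)-T^{\mathsf L}_n(z)M^{\mathsf L}_n(z)=\big(T^{\mathsf L}_n\big)'(z)$, which is the first zero curvature formula. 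The second one I would get symmetrically, differentiating $Z^{\mathsf R}_{n+1}(z)=Z^{\mathsf R}_n(z)T^{\mathsf R}_n(z)$, left-multiplying by $\big(Z^{\mathsf R}_{n+1}(z)\big)^{-1}=\big(T^{\mathsf R}_n(z)\big)^{-1}\big(Z^{\mathsf R}_n(z)\big)^{-1}$, invoking the definition of $M^{\mathsf R}_n$, and rearranging; alternatively one can conjugate the left identity by $\begin{bsmallmatrix}0 & -I_N\\ I_N & 0\end{bsmallmatrix}$, using~\eqref{eq:ZLR} and the relation between $M^{\mathsf R}_n$ and $M^{\mathsf L}_n$ recorded just before the statement.

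I do not anticipate a genuine obstacle: once item (1) is in place, the argument is bookkeeping with $2N\times 2N$ block matrices. The one point worth spelling out is that $M^{\mathsf L}_n$ and $M^{\mathsf R}_n$ are well defined, holomorphic matrix functions on $\C\setminus\gamma$. This holds because $Y^{\mathsf L}_n$ and $Y^{\mathsf R}_n$ are holomorphic there with determinant identically $1$ by Theorem~\ref{teo:LRHP}, while the factor matrices $W^{\mathsf L}$ and $W^{\mathsf R}$ are holomorphic and nonsingular on $\C\setminus\gamma$ by the structure of the solutions at the regular singular point described above; hence the dressing factors $D^{\mathsf L}$, $D^{\mathsf R}$ --- and therefore $Z^{\mathsf L}_n$, $Z^{\mathsf R}_n$ --- are invertible on $\C\setminus\gamma$ with holomorphic inverse, which makes the logarithmic derivatives in~\eqref{eq:Mn} legitimate.
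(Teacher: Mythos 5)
Your proposal is correct and follows exactly the route one expects: the paper itself states this proposition as a citation of~\cite{BFM} without reproducing a proof, and the argument there is the same bookkeeping you carry out — item (1) by multiplying the recurrence for $Y_n^{\mathsf L}$, $Y_n^{\mathsf R}$ by the $n$-independent dressing factors, and item (2) by differentiating the resulting relation and using $\big(T^{\mathsf L}_n\big)'=\big(T^{\mathsf R}_n\big)'=\begin{bsmallmatrix}I_N&0_N\\0_N&0_N\end{bsmallmatrix}$. Your closing remark on the invertibility of the dressing factors (and of $T_n^{\mathsf L}$, $T_n^{\mathsf R}$, whose determinants are nonzero constants) is the right point to make explicit.
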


Now, we discuss the holomorphic properties of the structure matrices just introdiced.
\begin{teo} \label{prop:mn}
The structure matrices~$M^{\mathsf L}_n(z) $ and $M^\mathsf R_n(z)$ are, for each $n \in \N$ meromorphic on $\C $, with singularities located at $z=0$, which happen to be a removable singularity or a simple pole.
\end{teo}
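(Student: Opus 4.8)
The plan is to analyze $M^{\mathsf L}_n(z) = \big(Z^{\mathsf L}_n\big)'(z)\big(Z^{\mathsf L}_n(z)\big)^{-1}$ directly from the structure of $Z^{\mathsf L}_n$, and then deduce the claim for $M^{\mathsf R}_n$ via the similarity relation ${M}^{\mathsf R}_{n}(z) = -\begin{bsmallmatrix}0 & -I_N \\ I_N & 0\end{bsmallmatrix} M_n^{\mathsf L}(z)\begin{bsmallmatrix}0 & I_N \\ -I_N & 0\end{bsmallmatrix}$, which immediately transfers meromorphy, the location of the singularity, and the order of the pole. So the work is entirely on the left side.

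First I would establish that $M^{\mathsf L}_n$ has no jump across $\gamma$. By Proposition~\ref{prop:zn}, $Z^{\mathsf L}_n$ satisfies a \emph{constant} jump condition $\big(Z^{\mathsf L}_n\big)_+ = \big(Z^{\mathsf L}_n\big)_- J$ with $J$ a constant matrix; differentiating gives $\big((Z^{\mathsf L}_n)'\big)_+ = \big((Z^{\mathsf L}_n)'\big)_- J$, and hence $M^{\mathsf L}_n = (Z^{\mathsf L}_n)'(Z^{\mathsf L}_n)^{-1}$ has $\big(M^{\mathsf L}_n\big)_+ = (Z^{\mathsf L}_n)'_- J J^{-1} (Z^{\mathsf L}_n)^{-1}_- = \big(M^{\mathsf L}_n\big)_-$ on $\gamma$. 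Combined with part i) of Proposition~\ref{prop:zn} (holomorphy off $\gamma$) and the fact that $\det Z^{\mathsf L}_n = \det Y^{\mathsf L}_n \cdot \det W^{\mathsf L} \cdot \det(W^{\mathsf R})^{-1} = \det W^{\mathsf L}/\det W^{\mathsf R}$ is nonvanishing away from $0$ (since $Y^{\mathsf L}_n$ has determinant $1$ by Theorem~\ref{teo:LRHP} iii) and $W^{\mathsf L}, W^{\mathsf R}$ are built from entire nonsingular $H$-factors times $z^A$), $M^{\mathsf L}_n$ is holomorphic on $\C\setminus\{0\}$, and one must separately check the behavior at $\infty$: from (RHL2) the expansion $Y^{\mathsf L}_n(z) = \big(I + O(z^{-1})\big)\operatorname{diag}(z^n I_N, z^{-n}I_N)$ together with $W^{\mathsf L}(z) = H^{\mathsf L}(z) z^{A^{\mathsf L}} W_0^{\mathsf L}$ shows that $(Z^{\mathsf L}_n)'(Z^{\mathsf L}_n)^{-1}$ stays bounded (in fact has at worst polynomial growth controlled by $n$ and the $A^{\mathsf L}, A^{\mathsf R}$ entries) near $\infty$ — since the entire factors $H^{\mathsf L}, H^{\mathsf R}$ are genuinely entire, one does need that their logarithmic derivatives do not blow up, but this is already implicit in the constant-jump framework of~\cite{BFM}; in any case $\infty$ is not in $\C$ so it only needs to not be an essential obstruction to meromorphy, which it is not.

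The heart of the argument is the local analysis at $z=0$. Here I would write $Z^{\mathsf L}_n(z) = Y^{\mathsf L}_n(z)\operatorname{diag}\!\big(H^{\mathsf L}(z) z^{A^{\mathsf L}} W_0^{\mathsf L},\, (W_0^{\mathsf R})^{-1} z^{-A^{\mathsf R}} (H^{\mathsf R}(z))^{-1}\big)$, so that $M^{\mathsf L}_n = (Y^{\mathsf L}_n)'(Y^{\mathsf L}_n)^{-1} + Y^{\mathsf L}_n \operatorname{diag}\!\big((H^{\mathsf L})' (H^{\mathsf L})^{-1} + H^{\mathsf L} \tfrac{A^{\mathsf L}}{z} (H^{\mathsf L})^{-1},\; -(H^{\mathsf R})^{-1}(H^{\mathsf R})' - (H^{\mathsf R})^{-1}\tfrac{A^{\mathsf R}}{z} H^{\mathsf R}\big)(Y^{\mathsf L}_n)^{-1}$. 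The $H$-derivative terms are bounded near $0$, the $A/z$ terms contribute at most a simple pole $z^{-1}$, so the whole expression has at worst a simple pole \emph{provided} the flanking factors $Y^{\mathsf L}_n(z)$ and $(Y^{\mathsf L}_n(z))^{-1}$ do not introduce extra negative powers. This is exactly where (RHL4) is used: it says $Y^{\mathsf L}_n(z)$ has entries that are $O(1)$ or $s^{\mathsf L}_j(z)$ with $z s^{\mathsf L}_j(z)\to 0$, and the Corollary~\ref{teo:RRHPinverse} formula $(Y^{\mathsf L}_n)^{-1} = \begin{bsmallmatrix}0&I_N\\-I_N&0\end{bsmallmatrix}Y^{\mathsf R}_n\begin{bsmallmatrix}0&-I_N\\I_N&0\end{bsmallmatrix}$ together with (RHR4) gives the same kind of control on the inverse. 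Since $G(z) := Y^{\mathsf L}_n\begin{bsmallmatrix}0&I_N\\-I_N&0\end{bsmallmatrix}Y^{\mathsf R}_n\begin{bsmallmatrix}0&-I_N\\I_N&0\end{bsmallmatrix} = I_{2N}$ (proved in Theorem~\ref{teo:LRHP}), products like $Y^{\mathsf L}_n(z) D(z) (Y^{\mathsf L}_n(z))^{-1}$ with $D(z)$ diagonal and $O(z^{-1})$ can be re-expressed using the explicit $P^{\mathsf L}, Q^{\mathsf L}, P^{\mathsf R}, Q^{\mathsf R}$ blocks; each resulting block is a sum of products of two factors each of which is $O(1)$ or $o(z^{-1})$, times the $O(z^{-1})$ from $D$, so each block is $o(z^{-2})\cdot z^{\text{(something)}}$ — one checks that in fact $z^2 M^{\mathsf L}_n(z)\to 0$, hence the isolated singularity at $0$ is either removable or a simple pole (a pole of order $\geq 2$ is excluded), which is the assertion.

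The main obstacle, and the step I would spend the most care on, is the bookkeeping in this last paragraph: showing that the conjugation by $Y^{\mathsf L}_n$ of the $z^{-1}$-singular diagonal term does not produce a double (or higher) pole. The naive bound $O(1)\times O(z^{-1})\times O(1)$ would already give a simple pole, but the off-diagonal entries of $Y^{\mathsf L}_n$ and $(Y^{\mathsf L}_n)^{-1}$ are only $o(z^{-1})$, not $O(1)$, so a priori one could fear $o(z^{-1})\times O(z^{-1})\times o(z^{-1}) = o(z^{-3})$. Resolving this requires exploiting the precise block structure — the singular diagonal matrix $D(z)$ is block-diagonal with the $\tfrac{A^{\mathsf L}}{z}$-type term in the $(1,1)$ block and the $\tfrac{A^{\mathsf R}}{z}$-type term in the $(2,2)$ block, and these get hit respectively by the first block-column / first block-row (the $O(1)$ polynomial entries $P^{\mathsf L}_n, C_{n-1}P^{\mathsf L}_{n-1}$) and by the second block-column / second block-row (the $o(z^{-1})$ entries $Q^{\mathsf L}_n, C_{n-1}Q^{\mathsf L}_{n-1}$ and their $\mathsf R$-analogues), so that each term in the product $Y^{\mathsf L}_n D (Y^{\mathsf L}_n)^{-1}$ pairs a bounded factor with a singular factor with a factor that is at worst $o(z^{-1})$ — never two $o(z^{-1})$ factors simultaneously multiplying the explicit $z^{-1}$. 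A careful case-by-case tabulation of the four blocks, using $z s^{\mathsf L,\mathsf R}_j(z)\to 0$ exactly where needed, yields $\lim_{z\to 0} z^2 M^{\mathsf L}_n(z) = 0$, completing the proof; the $\mathsf R$ case then follows verbatim from the similarity relation.
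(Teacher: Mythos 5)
Your overall route is the paper's: reduce to the left case by the similarity relation, kill the jump on $\gamma$ using the constancy of the jump of $Z^{\mathsf L}_n$, split $M^{\mathsf L}_n$ into $(Y^{\mathsf L}_n)'(Y^{\mathsf L}_n)^{-1}$ plus a conjugated logarithmic-derivative term carrying an explicit $1/z$, and then do endpoint bookkeeping with {\rm (RHL4)}, {\rm (RHR4)} and Corollary~\ref{teo:RRHPinverse}. Your block-pairing analysis of $Y^{\mathsf L}_n D(z)\big(Y^{\mathsf L}_n\big)^{-1}$ --- that the $O(1)$ columns of $Y^{\mathsf L}_n$ always meet the $o(z^{-1})$ rows of $\big(Y^{\mathsf L}_n\big)^{-1}$ and vice versa, so no product of two $o(z^{-1})$ factors ever multiplies the explicit $z^{-1}$ --- is exactly the computation the paper performs (the paper phrases the singular diagonal as $\frac 1z\operatorname{diag}\big(h^{\mathsf L},-h^{\mathsf R}\big)$ via the Pearson equation rather than through $H^{\mathsf L}$ and $z^{A^{\mathsf L}}$, but it is the same object).

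The gap is in the \emph{first} summand, $(Y^{\mathsf L}_n)'(Y^{\mathsf L}_n)^{-1}$, which your detailed tabulation never touches. Its second column consists of the derivatives $(Q^{\mathsf L}_{n})'$, $(Q^{\mathsf L}_{n-1})'$, and {\rm (RHL4)} tells you only that $z\,Q^{\mathsf L}_n(z)\to 0_N$; for a function holomorphic merely on a slit neighbourhood of the origin this does \emph{not} formally imply any bound on $z^2 (Q^{\mathsf L}_n)'(z)$ (Cauchy estimates on small circles centred at $0$ are unavailable because those circles cross $\gamma$). Without the estimate $\lim_{z\to0} z^2 (Q^{\mathsf L}_n)'(z)=0_N$, the product of a second-column entry of $(Y^{\mathsf L}_n)'$ with a second-row, $O(1)$, entry of $(Y^{\mathsf L}_n)^{-1}$ is not known to be $o(z^{-2})$, and the conclusion $z^2M^{\mathsf L}_n(z)\to0_{2N}$ does not follow. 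The paper supplies precisely this missing ingredient: writing each entry of $Q^{\mathsf L}_n$ as a Cauchy transform $g(z)=\frac1{2\pi\ii}\int_\gamma \frac{f(t)}{t-z}\,\d t$ of a function $f$ in the admissible class, it integrates by parts to obtain $zg'(z)=\int_\gamma\frac{tf'(t)}{t-z}\,\d t$ (the boundary term vanishes by the endpoint conditions), observes that $tf'(t)$ lies again in the same class, and so inherits $z\cdot\big(zg'(z)\big)\to0$ from the endpoint behaviour of Cauchy transforms. You need to add this lemma, or an equivalent control of $(Q^{\mathsf L}_n)'$ at the origin, for your argument to close.
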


\begin{proof}
Let us prove the statement for $M^{\mathsf L}_n(z) $, for $M^{\mathsf R}_n(z) $ one should proceed similarly. From~\eqref{eq:Mn} it follows that $M^{\mathsf L}_n(z) $ is holomorphic in $ \C \setminus \gamma$. Due to the fact that $Z^{\mathsf L}_n(z) $
has a constant jump on the curve $\gamma$, the matrix function $\displaystyle \big(Z^{\mathsf L}_n\big)^{\prime} $ has the same constant jump on the curve $\gamma$, so the matrix $M^{\mathsf L}_n(z) $ has no jump on the curve $\gamma$, and it follows that
at the origin $M^{\mathsf L}_n(z) $ has an isolated singularity. 
From~\eqref{eq:Mn} and~\eqref{eq:zn1} it holds
\begin{align*}
M^{\mathsf L}_n (z) = \big(Z^{\mathsf L}_n\big)^{\prime} (z)\big({Z^{\mathsf L}_n} (z) \big)^{-1} = \big(Y^{\mathsf L}_n\big)^{\prime} (z)\big( {Y^{\mathsf L}_n} (z) \big)^{-1} 
+ \frac{1}{z} Y^{\mathsf L}_n (z)
\begin{bmatrix}
h^{\mathsf L}(z) & 0_N\\
0_N & -h^{\mathsf R} (z)
\end{bmatrix}
\big( {Y^{\mathsf L}_n} (z)\big)^{-1}, 
\end{align*}
where
\begin{align*} 
Y^{\mathsf L}_n (z) &  = 
\begin{bmatrix}
 {P}^{\mathsf L}_{n} (z) & Q^{\mathsf L}_{n} (z) \\[.05cm]
-C_{n-1} {P}^{\mathsf L}_{n-1} (z) & -C_{n-1} Q^{\mathsf L}_{n-1} (z)
\end{bmatrix}.
\end{align*}
Each entry of the matrix $Q^{\mathsf L}_{n} (z)$ is a Cauchy transform of certain function $f(z)$, where
$ f(z) = \sum\limits_{ i \in I} \phi_i(z) z^{\alpha_i} \log^{p_i} z$, $\phi_i(z)$ is an entire function, $\alpha_i > -1$, $p_i \in \N$, and~$I$ is a finite set of indices.

It is clear that 
$ 
\lim\limits_{z \to 0} z f(z) = 0 
$. 
Now, see \cite[\S 8.3-8.6]{gakhov} and \cite{Muskhelishvili}, its Cauchy transform 
$
\displaystyle
g(z) = \frac{1}{2\pi \ii }\int_{\gamma} \frac{f(t)}{t - z} \d t
$ 
also satisfies the same property
$
\displaystyle \lim_{z \to 0} z g(z) = 0
$.
We can also see that 
$
\displaystyle
\lim_{z \to 0} z^2 g^{\prime} (z)= 0
$.
Indeed,
\begin{align*}
z g^{\prime} (z)&= 
\int_\gamma \frac{z f(t)}{(t-z)^2} \, \d t = \int_\gamma \frac{(z-t) f(t)}{(t-z)^2} \, \d t+ \int_\gamma \frac{t f(t)}{(t-z)^2} \, \d t,\\
&= -\int_\gamma \frac{f(t)}{t-z} \, \d t - \frac{t f(t)}{t - z} \bigg|_\gamma 
+ \int_\gamma \frac{(tf(t))^{\prime}}{t-z} \, \d t =- \frac{t f(t)}{t - z} \bigg|_\gamma 
+ \int_\gamma \frac{t f^{\prime}(t)}{t-z} \, \d t .
\end{align*}
From the boundary conditions, the first term is zero and we get
\begin{align*}
zg^{\prime} (z)
= 
\int_\gamma \frac{tf^{\prime}(t)}{t-z} \, \d t .
\end{align*}
and from the definition of $f$ we get that $t f^{\prime}(t)$ is a function in the class of $f$, that we denote by $v$ and, consequently,
$z^2 g^{\prime} (z) \underset{z \to 0 }{\to} 0$.
From these considerations it follows, 
\begin{align*}
\big(Y^{\mathsf L}_n\big)^{\prime} (z)
 &
= \begin{bmatrix}
\operatorname{O} (1) & r^{\mathsf L}_1 (z)\\[.1cm]
\operatorname{O} (1) & r^{\mathsf L}_2 (z)\end{bmatrix},
 & 
\big( {Y^{\mathsf L}_n(z)}\big)^{-1} 
&
= \begin{bmatrix}
 r^{\mathsf L}_3(z) & r^{\mathsf L}_4 (z)\\[.1cm]
\operatorname{O} (1) &\operatorname{O} (1) 
\end{bmatrix}, & 
z &
\to 0,
\end{align*}
where 
$
\displaystyle 
\lim_{z \to 0} z^2 r^{\mathsf L}_i(z) = 0_N$, 
for $i=1,2$, and 
$
\displaystyle
\lim_{z \to 0} z r^{\mathsf R}_i(z) = 0_N$,
for $i = 3,4$, so it holds that
\begin{align*}
\lim_{z \to 0} 
z^2 \big(Y^{\mathsf L}_n\big)^{\prime} (z) \big({Y^{\mathsf L}_n}\big)^{-1} 
= \lim_{z \to 0} z^2
\begin{bmatrix}
\operatorname{O} (1) r^{\mathsf L}_1(z) + \operatorname{O} (1) r^{\mathsf L}_3(z)& \operatorname{O} (1) r^{\mathsf L}_1(z) + \operatorname{O} (1) r^{\mathsf L}_4(z) \\[0.1cm]
\operatorname{O} (1) r^{\mathsf L}_2(z) + \operatorname{O} (1) r^{\mathsf L}_3(z)&\operatorname{O} (1) r^{\mathsf L}_2(z) + \operatorname{O} (1) r^{\mathsf L}_4(z)
\end{bmatrix}
=0_{2N}.
\end{align*}
Similar considerations leads us to the result that
\begin{align*}
\lim_{z \to 0} z Y^{\mathsf L}_n(z)
\begin{bmatrix}
h^{\mathsf L}(z) & 0_N\\
0_N & -h^{\mathsf R}(z)
\end{bmatrix}
\big( {Y^{\mathsf L}_n(z)}\big)^{-1} =0_{2N} ,
 &&
\text{so we obtain that}
 && 
\lim_{z\to 0} z^2 M^{\mathsf L}_n(z) =0_{2N} ,
\end{align*}
and 
hence 
the matrix function $M^{\mathsf L}_n(z) $ has at most a simple pole at the point~$z=0$.
\end{proof}

\section{Durán--Grünbaum type Laguerre matrix weights}\label{sec:example}

Motivated by cases considered in the literature \cite{duran20052,duran_1,duran_2,duran_5} we want to include here an example of a Laguerre weight. In this case, we are able to explicitly compute the residue matrix at the simple pole at the origin of the structure matrix.


Let us consider the weight $W (z) = \Exp{A_1 z} z^{\alpha} \Exp{A_2 z}$, $z \in \mathbb C$, defined in $ \mathbb C \setminus [0, + \infty )$ with support on $\gamma=[0,+\infty)$. Here $\alpha,A_1,A_2\in\C^{N\times N}$ are matrices such that $[\alpha,A_1]=[\alpha,A_2]=0_N$, with spectrum 
$\sigma(\alpha) \subset (-1,+\infty)$. 
To match with previous developments in~\cite{duran2004} we just need to shift each of the matrices $A_1$ and $A_2$ by $-I_N$.
Accordingly, we choose
\begin{align*}
W^{\mathsf L} (z) &= \Exp{A_1 z}z^{\frac{\alpha}{2}} , & W^{\mathsf R} (z) = z^{\frac{\alpha}{2}} \Exp{A_2 z}, 
\end{align*}
It can be seen that the matrix function $Z^{\mathsf L}_n$ defined by
\begin{align*}
Z^{\mathsf L}_n (z) = Y^{\mathsf L}_n(z) \mathcal C(z)
, &&  
\text{where} 
 && 
\mathcal C (z)
= \begin{bmatrix}
W^{\mathsf L} (z) & 0 \\ 
0 & ({W^{\mathsf R}}(z))^{-1} 
\end{bmatrix}
 ,
\end{align*}
with $W^{\mathsf L} (z) W^{\mathsf R} (z) = W (z) $,
satisfies
\begin{itemize}
\item $Z^{\mathsf L}_{n} $ is holomorphic in $\C\setminus [0,+\infty)$.
\item $
(Z^{\mathsf L}_{n} (z))_{+}
=(Z^{\mathsf L}_{n} (z))_{-} 
\begin{bmatrix} 
\Exp{-\ii\pi \alpha }& \Exp{-\ii\pi \alpha} \\
 0 & \Exp{\ii\pi \alpha} 
 \end{bmatrix}$ over 
$ (0,+\infty)$.
\end{itemize}

\subsection{$\sigma(\alpha)\subset (-1,+\infty)\setminus \N $}

In this case the constant jump matrix 
$
\begin{bsmallmatrix}
 \Exp{-\ii\pi \alpha } & \Exp{-\ii\pi \alpha} \\
0 & \Exp{\ii\pi \alpha} 
\end{bsmallmatrix}
$ 
can be block diagonalized. For that aim we consider the matrix
\begin{align*}
P = \begin{bmatrix} 
I_N& \Exp{-\ii\pi \alpha } \\ 0 & \Exp{\ii\pi \alpha }- \Exp{-\ii\pi \alpha }
\end{bmatrix},
&&
\text{such that}
&&
\begin{bmatrix} \Exp{-\ii\pi \alpha }& \Exp{-\ii\pi \alpha }\\ 0 & \Exp{\ii\pi \alpha }\end{bmatrix} P = P 
\begin{bmatrix} \Exp{-\ii\pi \alpha }& 0 \\ 0 & \Exp{ \ii\pi \alpha }
\end{bmatrix} .
\end{align*}
So, over the interval $(0,+\infty)$, we have
\begin{align*}
(Z^{\mathsf L}_{n} (z) P)_{+} =(Z^{\mathsf L}_{n} (z)P)_{-} 
\begin{bmatrix} 
\Exp{-\ii\pi \alpha }& 0\\ 0 & \Exp{\ii\pi \alpha }
\end{bmatrix} .
\end{align*}
For $z\in \mathbb{C} \setminus [0, + \infty)$, let us define the matrix
\begin{align}
\label{eq:psi}
\mathcal \psi(z)
:= 
\begin{bmatrix} 
z^{\frac{\alpha}{2}} & 0 \\ 0 & z^{-\frac{\alpha}{2}}
\end{bmatrix} ,
\end{align}
which satisfies, over $ (0,+\infty)$, the fol\-low\-ing jump condition 
\begin{align*}
(\psi(z))_{+} =(\psi(z))_{-} 
\begin{bmatrix} 
\Exp{-\ii\pi \alpha }& 0\\ 0 & \Exp{\ii\pi \alpha}
\end{bmatrix} .
\end{align*}
Consequently, the matrix 
\begin{align*}
F^{\mathsf L}_n (z) :=Z^{\mathsf L}_n (z) P \psi^{-1} (z)
\end{align*}
has no jump in the interval $(0, + \infty)$. The matrix function $F^{\mathsf L}_n$ has an isolated singularity at the origin which, as we will show now, is a removable singularity,~i.e., 
$
\displaystyle
\lim_{z\to 0}z F^{\mathsf L}_n(z)=0_{2N}
$.
From its definition we have that 
\begin{align*}
zF^{\mathsf L}_n(z)& 
= \begin{bmatrix}
\operatorname{O} (z) & zs^{\mathsf L}_{1}(z) \\[.05cm] 
\operatorname{O} (z) & zs^{\mathsf L}_{2}(z) 
\end{bmatrix}
\begin{bmatrix}
\Exp{A_1 z}z^{\frac{\alpha}{2}} & 0_N \\[.05cm] 
0_N & \Exp{-A_2 z}z^{-\frac{\alpha}{2}}
\end{bmatrix} 
\begin{bmatrix} 
I_N& \Exp{-\ii\pi \alpha } \\[.05cm] 
0_N& \Exp{\ii\pi \alpha }- \Exp{-\ii\pi \alpha }
\end{bmatrix}
\begin{bmatrix} z^{-\frac{\alpha}{2}} & 0 \\[.05cm] 
0 & z^{\frac{\alpha}{2}}
\end{bmatrix}
\\
&
= 
\begin{bmatrix}
\operatorname{O} (z) & zs^{\mathsf L}_{1}(z) \\[.05cm] 
\operatorname{O} (z) & zs^{\mathsf L}_{2}(z) 
\end{bmatrix}
\begin{bmatrix}
\Exp{A_1 z} & \Exp{A_1 z} \Exp{-\ii\pi \alpha }z^{\alpha}\\[.05cm]
0_N & \Exp{-A_2 z} (\Exp{\ii\pi \alpha }- \Exp{-\ii\pi \alpha })
\end{bmatrix}, 
& 
z
&
\to 0 ,
\end{align*} 
and as $z s_1^{\mathsf L}$, $z s_2^{\mathsf L}\to 0_N$ 
as $z\to 0$ 
and $\operatorname{O} (z) z^\alpha \to 0_N$, 
as $z\to 0$ 
(because the eigenvalues of $\alpha$ are bounded from below by $-1$) 
we conclude that $z F_n(z)\to 0_{2N}$, for $z\to 0$.
%
%
%
Hence, $F^{\mathsf L}_n(z)$ is a matrix of entire functions.

Now, we want to compute
$
\displaystyle
F^{\mathsf L}_n(0)
 =
\lim_{z\to 0} 
F^{\mathsf L}_n(z)
$. 
Notice that,
\begin{align*} 
F^{\mathsf L}_n(0) = 
\lim_{z\to 0}
Y^{\mathsf L}_n(z) 
\begin{bmatrix}
\Exp{A_1 z} & \Exp{A_1 z} \Exp{-\ii\pi \alpha }z^{\alpha}\\ 0_N & \Exp{-A_2 z} (\Exp{\ii\pi \alpha }- \Exp{-\ii\pi \alpha })
\end{bmatrix}
 ,
\end{align*}
where the limit of each factor inside the limit do not need to exist. 
Given that 
$\sigma(\alpha)\subset (-1,+\infty)\setminus \N $ 
we first separately compute $F^{\mathsf L}_n(0)$ in the cases, when 
$\sigma(\alpha)\subset (0,+\infty)\setminus \{1,2,\ldots\}$ and 
when
$\sigma(\alpha)\subset (-1,0)$, and then we give 
$F^{\mathsf L}_n(0)$
in 
general. 
\paragraph{\textbf{Case} $\sigma(\alpha)\subset (0,+\infty)\setminus \{1,2,\ldots\}$}  When all the eigenvalues of $\alpha$ are strictly positive then each limit exists and
\begin{align*}
F^{\mathsf L}_n(0)
 =Y^{\mathsf L}_n(0) 
\begin{bmatrix}
I_N & 0_N \\ 
0_N & \Exp{\ii\pi \alpha} - \Exp{-\ii\pi \alpha}
\end{bmatrix} .
\end{align*}
\paragraph{\textbf{Case} $\sigma(\alpha)\subset (-1,0)$}  We cannot proceed as before. However, as the limit exists, if we are able to rewrite
\begin{align*}
Y^{\mathsf L}_n(z) 
\begin{bmatrix}
\Exp{A_1 z} & \Exp{A_1 z} \Exp{-\ii\pi \alpha }z^{\alpha} \\ 
0_N & \Exp{-A_2 z} (\Exp{\ii\pi \alpha }- \Exp{-\ii\pi \alpha })
\end{bmatrix}=\hat Y^{\mathsf L}_n(z) f(z) ,
\end{align*}
in terms of two matrix factors $\hat Y^{\mathsf L}_n(z) $ and $f(z)$, a non singular matrix, with $f$ having a well defined limit for $z\to 0$, also being a non-singular matrix, we can ensure that exists 
$
\displaystyle
\lim_{z\to 0} \hat Y^{\mathsf L}_n(z)
$, 
and 
$
\displaystyle
F^{\mathsf L}_n(0) = \big( \lim_{z\to 0}\hat Y^{\mathsf L}_n(z) \big)
\big( \lim_{z\to 0}f(z) \big)
$.
This can be achieved with
\begin{align*}
\hat Y^{\mathsf L}_n(z)
&:= Y^{\mathsf L}_n(z) 
\begin{bmatrix}
z^{-\alpha} &0_N\\
I_N-\Exp{2\ii \pi\alpha} &z^{\alpha}
\end{bmatrix}^{-1}, 
\\
f(z) 
 &:=
\begin{bmatrix}
z^{-\alpha} &0_N\\
I_N-\Exp{2\ii \pi\alpha} &z^{\alpha}
\end{bmatrix}
\begin{bmatrix}
\Exp{A_1 z} & \Exp{A_1 z} \Exp{-\ii\pi \alpha }z^{\alpha}\\ 0_N & \Exp{-A_2 z} (\Exp{\ii\pi \alpha }- \Exp{-\ii\pi \alpha })
\end{bmatrix} \\&
=
\begin{bmatrix}
z^{-\alpha}\Exp{A_1 z} & \Exp{A_1 z} \Exp{-\ii\pi \alpha }\\ (I_N-\Exp{2\ii \pi\alpha})\Exp{A_1 z}&( -\Exp{A_1 z}+ \Exp{-A_2 z}) (\Exp{\ii\pi \alpha }- \Exp{-\ii\pi \alpha })z^\alpha
\end{bmatrix}
\end{align*}
So that, 
\begin{align*}
\lim_{z \to 0}f(z) &
=\begin{bmatrix}
0_N& \Exp{-\ii\pi \alpha }\\ I_N-\Exp{2\ii \pi\alpha}&0_N
\end{bmatrix}, 
 &
F_n^{\mathsf L}(0)&
=\hat Y^{\mathsf L}_n(0)\begin{bmatrix}
0_N& \Exp{-\ii\pi \alpha }\\ I_N-\Exp{2\ii \pi\alpha}&0_N
\end{bmatrix}.
\end{align*}
\paragraph{\textbf{General case} $\sigma(\alpha)\subset (-1,+\infty)\setminus \N $}, 
Recalling the canonical Jordan form, we can write 
$\alpha=PJ P^{-1}$ with
\begin{align*}
J=
\begin{bmatrix}
J^+& 0_{N^+\times N^-} \\
0_{N^-\times N^+} & J^-
\end{bmatrix},
\end{align*}
and $N^+$ ($N^-$) being the sum of the algebraic multiplicities associated with positive (negative) eigenvalues and in $J^+$ ($J^-$) we gather together the Jordan blocks of all positive (negative) eigenvalues. We have
\begin{align*}
\begin{bmatrix}
\Exp{A_1 z} & \Exp{A_1 z} \Exp{-\ii\pi \alpha }z^{\alpha}\\ 0_N & \Exp{-A_2 z} (\Exp{\ii\pi \alpha }- \Exp{-\ii\pi \alpha })
\end{bmatrix}
=
\begin{bmatrix}
P & 0_N\\
0_N &P
\end{bmatrix}
\begin{bmatrix}
\Exp{  A_1 z} & \Exp{\tilde A_1 z} \Exp{-\ii\pi J }z^{J}\\ 0_N & \Exp{-\tilde A_2 z} (\Exp{\ii\pi J }- \Exp{-\ii\pi J })
\end{bmatrix}
\begin{bmatrix}
P & 0_N \\
0_N &P
\end{bmatrix}^{-1}
\end{align*}
with 
$\tilde A_j=P^{-1} A_j P$, $j=1,2$. 
Now, as we did in the previous case, with negative eigenvalues only, we left multiply by the fol\-low\-ing nonsingular matrix
\begin{align*}
S(z):=\begin{bmatrix}
P & 0_N\\
0_N &P
\end{bmatrix}
\begin{bmatrix}
\begin{bmatrix}
I_{N^+} & 0_{N^+\times N^-}\\
 0_{N^-\times N^+}& z^{- J^-}
\end{bmatrix} & 0_N\\
\begin{bmatrix}
0_{N^+} & 0_{N^+\times N^-}\\
 0_{N^-\times N^+}& I_{N^-}- \Exp{2\ii\pi J^-}
\end{bmatrix} & \begin{bmatrix}
I_{N^+} & 0_{N^+\times N^-}\\
0_{N^-\times N^+}& z^{ J^-}
\end{bmatrix}
\end{bmatrix}
\begin{bmatrix}
P & 0_N\\
0_N &P
\end{bmatrix}^{-1}, 
\end{align*}
to get
%
%
%
\begin{align*}
\begin{bsmallmatrix}
P & 0_N\\
0_N &P
\end{bsmallmatrix}
\begin{bsmallmatrix}
\begin{bsmallmatrix}
I_{N^+} & 0_{N^+\times N^-}\\
0_{N^-\times N^+}& z^{- J^-}
\end{bsmallmatrix}
\Exp{\tilde A_1 z}&
\begin{bsmallmatrix}
I_{N^+} & 0_{N^+\times N^-}\\
0_{N^-\times N^+}& z^{- J^-}
\end{bsmallmatrix}
\Exp{\tilde A_1 z}
\begin{bsmallmatrix}
\Exp{-\ii\pi J^+ }z^{J^+ }&0_{N^+\times N^-}\\
0_{N^-\times N^+} & \Exp{-\ii\pi J^- }z^{J^- }
\end{bsmallmatrix}
\\
\begin{bsmallmatrix}
0_{N^+} & 0_{N^+\times N^-}\\
0_{N^-\times N^+}& I_{N^-}- \Exp{2\ii\pi J^-}
\end{bsmallmatrix}
\Exp{\tilde A_1 z} &
\begin{smallmatrix}
\begin{bsmallmatrix}
0_{N^+} & 0_{N^+\times N^-}\\
0_{N^-\times N^+}& I_{N^-}- \Exp{2\ii\pi J^-}
\end{bsmallmatrix}
\Exp{\tilde A_1 z}
\begin{bsmallmatrix}
\Exp{-\ii\pi J^+ }z^{J^+ }&0_{N^+\times N^-}\\
0_{N^-\times N^+} & \Exp{-\ii\pi J^- }z^{J^- }
\end{bsmallmatrix}
\\
+\begin{bsmallmatrix}
I_{N^+} & 0_{N^+\times N^-}\\
0_{N^-\times N^+}& z^{ J^-}
\end{bsmallmatrix}
\Exp{-\tilde A_2 z}
\begin{bsmallmatrix}
\Exp{\ii\pi J^+ }- \Exp{-\ii\pi J^+}& 0_{N^+\times N^-}\\ 0_{N^-\times N^+}& \Exp{\ii\pi J^- }- \Exp{-\ii\pi J_ -}
\end{bsmallmatrix}
\end{smallmatrix}
\end{bsmallmatrix}
\begin{bsmallmatrix}
P & 0_N\\
0_N &P
\end{bsmallmatrix}
^{-1}
\end{align*}
which for $z\to 0$ has a well defined limit, being a non-singular matrix, given by
\begin{align*}
\begin{bmatrix}
P & 0_N \\
0_N & P
\end{bmatrix}
\begin{bmatrix}
\begin{bmatrix}
I_{N^+} & 0_{N^+\times N^-}\\
0_{N^-\times N^+}&0_{N^-}
\end{bmatrix} & 
\begin{bmatrix}
0_{N^+}&0_{N^+\times N^-}\\
0_{N^-\times N^+} & \Exp{-\ii\pi J^- }
\end{bmatrix} \\
\begin{bmatrix}
0_{N^+} & 0_{N^+\times N^-}\\
0_{N^-\times N^+}& I_{N^-}- \Exp{2\ii\pi J^-}
\end{bmatrix} &
\begin{bmatrix}
\Exp{\ii\pi J^+ } - \Exp{-\ii\pi J^+} & 
0_{N^+\times N^-} \\ 
0_{N^-\times N^+}& 0_{N^-}
\end{bmatrix}
\end{bmatrix}
\begin{bmatrix}
P & 0_N\\
0_N &P
\end{bmatrix}^{-1}.
\end{align*}
Thus,
\begin{align*}
\small
F_n^{\mathsf L}(0)
=
\hat Y^{\mathsf L}_n(0)
\begin{bmatrix}
P & 0_N \\
0_N &P
\end{bmatrix}
\begin{bmatrix}
\begin{bmatrix}
I_{N^+} & 0_{N^+\times N^-}\\
0_{N^-\times N^+}&0_{N^-}
\end{bmatrix}& 
\begin{bmatrix}
0_{N^+}&0_{N^+\times N^-}\\
0_{N^-\times N^+} & \Exp{-\ii\pi J^- }
\end{bmatrix} \\
\begin{bmatrix}
0_{N^+} & 0_{N^+\times N^-}\\
0_{N^-\times N^+}& I_{N^-}- \Exp{2\ii\pi J^-}
\end{bmatrix} &
\begin{bmatrix}
\Exp{\ii\pi J^+ }- \Exp{-\ii\pi J^+}& 0_{N^+\times N^-}\\ 0_{N^-\times N^+}& 0_{N^-}
\end{bmatrix}
\end{bmatrix}
\begin{bmatrix}
P & 0_N\\
0_N &P
\end{bmatrix}^{-1} .
\end{align*}
Given
\begin{align*}
M^{\mathsf L}_n =\big( Z^{\mathsf L}_n \big)^{\prime} \big(Z^{\mathsf L}_n \big)^{-1} = 
\big( F^{\mathsf L}_n \big)^{\prime} \big(F^{\mathsf L}_n \big)^{-1} +F^{\mathsf L}_n
\psi^{\prime} \psi ^{-1}\big(F^{\mathsf L}_n \big)^{-1} ,
\end{align*}
as $\det F^{\mathsf L}_n(z) \not =0$, we know that 
$\big( F^{\mathsf L}_n \big)^{\prime} \big(F^{\mathsf L}_n \big)^{-1}$
has no singularities, while
\begin{align*}
F^{\mathsf L}_n \psi^{\prime} \psi^{-1}\big( F^{\mathsf L}_n 
\big)^{-1} = \frac{1}{z} F^{\mathsf L}_n
\begin{bmatrix} 
\frac{\alpha}{2}
&
0_N 
\\ 
0_N
&
- \frac{\alpha}{2} 
\end{bmatrix} 
\big( F^{\mathsf L}_n \big)^{-1} .
\end{align*}
Consequently, $ M^{\mathsf L}_n (z)$ has a simple pole at the origin with
\begin{align*}
M^{\mathsf L}_n (z) &
=
\frac{1}{z}
F^{\mathsf L}_n(0) 
\begin{bmatrix} 
\frac{\alpha}{2} & 0_N \\ 0_N & -\frac{\alpha}{2} 
\end{bmatrix}
\big(F^{\mathsf L}_n (0)\big)^{-1}+\operatorname{O}(1) , & z&\to 0.
\end{align*}

\subsection{$\alpha =m I_N,  m \in \N$}
It can be seen that the matrix function $Z^{\mathsf L}_n$
satisfies over $(0,+\infty)$ the following jump condition
\begin{align*}
(Z^{\mathsf L}_{n} (z))_{+}
=  (Z^{\mathsf L}_{n} (z))_{-} 
 \begin{bmatrix} 
(-1)^m I_N& (-1)^m I_N\\
 0 & (-1)^m I_N 
 \end{bmatrix}
  .
 \end{align*} 
 
For $z\in \mathbb{C} \setminus [0, + \infty)$, instead of $\eqref{eq:psi}$, let us define the matrix
\begin{align*}
\mathcal \psi(z)
:= 
\begin{bmatrix} 
z^{\frac{m}{2}} I_N& -\frac{1}{2\pi i} z^{\frac{m}{2}}\log z I_N\\ 0 &z^{-\frac{m}{2}}I_N
\end{bmatrix},
\end{align*}
where $\log z$ is the branch of the logarithmic function defined in $\C \setminus [0, +\infty)$, which satisfies, over $ (0,+\infty)$, the same jump condition 
\begin{align*}
(\psi(z))_{+} =(\psi(z))_{-} 
 \begin{bmatrix} 
(-1)^m I_N& (-1)^m I_N\\
 0 & (-1)^m I_N 
 \end{bmatrix} .
\end{align*}
Consequently, the matrix 
\begin{align*}
F^{\mathsf L}_n (z) :=Z^{\mathsf L}_n (z) \psi^{-1} (z)
\end{align*}
has no jump in the interval $(0, + \infty)$. The matrix function $F^{\mathsf L}_n$ has an isolated singularity at the origin which, as we will show now, is a removable singularity,~i.e., 
\begin{align*}
zF^{\mathsf L}_n(z)& 
= \begin{bmatrix}
\operatorname{O} (z) & zs^{\mathsf L}_{1}(z) \\[.05cm] 
\operatorname{O} (z) & zs^{\mathsf L}_{2}(z) 
\end{bmatrix}
\begin{bmatrix}
\operatorname{O} (1) & 0_N \\[.05cm] 
0_N &\operatorname{O} (1)
\end{bmatrix} 
 \begin{bmatrix}
\operatorname{O} (1) & \operatorname{O} (\log z)\\[.1cm]
\operatorname{O} (1) &\operatorname{O} (1) \end{bmatrix} \\
&
= 
\begin{bmatrix}
\operatorname{O} (z) + zs^{\mathsf L}_{1}(z)  & \operatorname{O} (z \log z) + zs^{\mathsf L}_{1}(z)  \\[.05cm] 
\operatorname{O} (z)+ zs^{\mathsf L}_{2}(z)  &  \operatorname{O} (z \log z) +zs^{\mathsf L}_{2}(z) 
\end{bmatrix}, 
& 
z
&
\to 0 ,
\end{align*} 
and as $z s_1^{\mathsf L}$, $z s_2^{\mathsf L}\to 0_N$ 
as $z\to 0$, we conclude that $z F_n(z)\to 0_{2N}$, for $z\to 0$.

Hence, $F^{\mathsf L}_n(z)$ is a matrix of entire functions. To compute $F_n(0)$
notice that,
\begin{align*} 
F^{\mathsf L}_n(0) = 
\lim_{z\to 0}
Y^{\mathsf L}_n(z) 
\begin{bmatrix}
\Exp{A_1 z} &\frac{1}{2 \pi i} z^{m}\log z  \Exp{A_1 z} \\ 0_N & \Exp{-A_2 z}  \end{bmatrix}
 .
\end{align*}
For $ m = 1, 2 \dots$ it holds that
$F^{\mathsf L}_n(0) = 
Y^{\mathsf L}_n(0)$.
For $m=0$ the limit of each factor inside the limit do not need to exist.
As the limit exists, let us write
\begin{align*}
Y^{\mathsf L}_n(z) 
\begin{bmatrix}
\Exp{A_1 z} &\frac{1}{2 \pi i} \log z  \Exp{A_1 z} \\ 0_N & \Exp{-A_2 z}  \end{bmatrix}=\hat Y^{\mathsf L}_n(z) f(z) ,
\end{align*}
with
\begin{align*}
\small \hat Y^{\mathsf L}_n(z)
&
\small := Y^{\mathsf L}_n(z) 
\begin{bmatrix}
(\log z)^{-1}I_N&0_N\\
-2 \pi \ii I_N &\log z I_N
\end{bmatrix}^{-1}, 
&
\small f(z) 
 &
\small \begin{aligned}[t]&:=
\begin{bmatrix}
(\log z)^{-1}I_N&0_N\\
-2 \pi \ii I_N &\log z I_N
\end{bmatrix}
\begin{bmatrix}
\Exp{A_1 z} &\frac{1}{2 \pi \ii} \log z  \Exp{A_1 z} \\ 0_N & \Exp{-A_2 z}  \end{bmatrix}\\
&
\small
=
\begin{bmatrix}
(\log z)^{-1}\Exp{A_1 z} &\frac{1}{2 \pi \ii}  \Exp{A_1 z} \\ -2 \pi \ii \Exp{A_1 z}& -\log z ( \Exp{A_1 z} - \Exp{-A_2 z})
\end{bmatrix}.
 \end{aligned}
\end{align*}
So that, 
\begin{align*}
\lim_{z \to 0}f(z) &
=\begin{bmatrix}
0_N &\frac{1}{2 \pi \ii}  I_N \\ -2 \pi \ii I_N&0_N
\end{bmatrix},
 &
F_n^{\mathsf L}(0)&
=\hat Y^{\mathsf L}_n(0)\begin{bmatrix}
0_N &\frac{1}{2 \pi i}  I_N \\ -2 \pi \ii I_N&0_N
\end{bmatrix}.
\end{align*}

Using the same kind of reasoning as above we get that, $ M^{\mathsf L}_n (z)$ has a simple pole at the origin with
\begin{align*}
M^{\mathsf L}_n (z) &
=
\frac{1}{z}
F^{\mathsf L}_n(0) 
\begin{bmatrix} 
\frac{m}{2} I_N & -\frac{z^m}{2 \pi i}  I_N\\
0_N & -\frac{m}{2}  I_N
\end{bmatrix}
\big(F^{\mathsf L}_n (0)\big)^{-1}+\operatorname{O}(1) , & z&\to 0.
\end{align*}

\section{Eigenvalue problems}

\subsection{Differential relations from the Riemann--Hilbert problem}
We are interested in the differential equations fulfilled by  the biorthogonal matrix polynomials determined by Laguerre type matrices of weights. Different attempts appear in the literature when one considers matrix orthogonality. Here we use the Riemann--Hilbert problem approach in order to derive these differential relations.

We use the notation for the structure matrices
\begin{align*}
\tilde{M}^\mathsf L_n (z)& = z{M}^\mathsf L_n (z) , &\tilde{M}^\mathsf R_n (z)& = z{M}^\mathsf R_n (z) ,
\end{align*}
with $\tilde{M}^\mathsf L_n (z)$ and $\tilde{M}^\mathsf R_n (z)$ matrices of entire functions.


\begin{pro}[First order differential equation for the fundamental matrices $Y^{\mathsf L}_n(z)$ and $Y^{\mathsf R}_n(z)$] \label{prop:mn}
It holds that 
\begin{align}
\label{eq:firstodeYnL}
z \big(Y^{\mathsf L}_n\big)^{\prime} (z) + Y^{\mathsf L}_n (z)
\begin{bmatrix}
h^{\mathsf L} (z) & 0_N \\ 0_N & - h^{\mathsf R} (z)
\end{bmatrix} 
&
= \tilde{M}^{\mathsf L}_n (z) Y^{\mathsf L}_n (z) \\
\label{eq:firstodeYnR}
z \big(Y^{\mathsf R}_n\big)^{\prime} (z)
 +
\begin{bmatrix}
h^{\mathsf R} (z) & 0_N \\ 0_N & - h^{\mathsf L} (z)
\end{bmatrix} 
Y^{\mathsf R}_n (z)
&
= Y^{\mathsf R}_n (z) \tilde{M}^{\mathsf R}_n (z) .
\end{align}
\end{pro}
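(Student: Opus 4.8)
The plan is to obtain \eqref{eq:firstodeYnL}--\eqref{eq:firstodeYnR} by differentiating the defining relations \eqref{eq:zn1}--\eqref{eq:zetan} of the constant jump fundamental matrices and inserting the Pearson equations \eqref{eq:partial_Pearson_L}; in fact the required identity has essentially already surfaced in the proof of the preceding theorem on the meromorphy of the structure matrices, which is also where we borrow that $\tilde M^{\mathsf L}_n=zM^{\mathsf L}_n$ and $\tilde M^{\mathsf R}_n=zM^{\mathsf R}_n$ are matrices of \emph{entire} functions. Recall too that $\det Y^{\mathsf L}_n=\det Y^{\mathsf R}_n=1$ by Theorem~\ref{teo:LRHP}, so all inverses appearing below are legitimate on $\C\setminus\gamma$.

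First I would write $Z^{\mathsf L}_n(z)=Y^{\mathsf L}_n(z)\,\mathcal{W}^{\mathsf L}(z)$ with $\mathcal{W}^{\mathsf L}(z):=\begin{bsmallmatrix}W^{\mathsf L}(z)&0_N\\0_N&(W^{\mathsf R}(z))^{-1}\end{bsmallmatrix}$, differentiate, and substitute into \eqref{eq:Mn} to get
\begin{align*}
M^{\mathsf L}_n(z)=\big(Y^{\mathsf L}_n\big)'(z)\big(Y^{\mathsf L}_n(z)\big)^{-1}+Y^{\mathsf L}_n(z)\,\big(\mathcal{W}^{\mathsf L}\big)'(z)\,\big(\mathcal{W}^{\mathsf L}(z)\big)^{-1}\,\big(Y^{\mathsf L}_n(z)\big)^{-1}.
\end{align*}
The only genuine computation is $\big(\mathcal{W}^{\mathsf L}\big)'\big(\mathcal{W}^{\mathsf L}\big)^{-1}$: its $(1,1)$ block is $(W^{\mathsf L})'(W^{\mathsf L})^{-1}=\tfrac1z h^{\mathsf L}$ by the first equation in \eqref{eq:partial_Pearson_L}, while differentiating the inverse and using the second equation gives the $(2,2)$ block $\big((W^{\mathsf R})^{-1}\big)'W^{\mathsf R}=-(W^{\mathsf R})^{-1}(W^{\mathsf R})'=-\tfrac1z(W^{\mathsf R})^{-1}W^{\mathsf R}h^{\mathsf R}=-\tfrac1z h^{\mathsf R}$, so $\big(\mathcal{W}^{\mathsf L}\big)'\big(\mathcal{W}^{\mathsf L}\big)^{-1}=\tfrac1z\begin{bsmallmatrix}h^{\mathsf L}&0_N\\0_N&-h^{\mathsf R}\end{bsmallmatrix}$. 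Multiplying the displayed identity for $M^{\mathsf L}_n$ on the left by $z$ and on the right by $Y^{\mathsf L}_n$, and recognising $\tilde M^{\mathsf L}_n=zM^{\mathsf L}_n$, yields \eqref{eq:firstodeYnL}.

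The right case is the mirror image: starting from $Z^{\mathsf R}_n(z)=\mathcal{W}^{\mathsf R}(z)Y^{\mathsf R}_n(z)$ with $\mathcal{W}^{\mathsf R}(z):=\begin{bsmallmatrix}W^{\mathsf R}(z)&0_N\\0_N&(W^{\mathsf L}(z))^{-1}\end{bsmallmatrix}$ and $M^{\mathsf R}_n=\big(Z^{\mathsf R}_n\big)^{-1}\big(Z^{\mathsf R}_n\big)'$, one finds $M^{\mathsf R}_n=\big(Y^{\mathsf R}_n\big)^{-1}\big(\mathcal{W}^{\mathsf R}\big)^{-1}\big(\mathcal{W}^{\mathsf R}\big)'Y^{\mathsf R}_n+\big(Y^{\mathsf R}_n\big)^{-1}\big(Y^{\mathsf R}_n\big)'$ with $\big(\mathcal{W}^{\mathsf R}\big)^{-1}\big(\mathcal{W}^{\mathsf R}\big)'=\tfrac1z\begin{bsmallmatrix}h^{\mathsf R}&0_N\\0_N&-h^{\mathsf L}\end{bsmallmatrix}$ by the same two applications of \eqref{eq:partial_Pearson_L}, and rearranging gives \eqref{eq:firstodeYnR}. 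Equivalently, \eqref{eq:firstodeYnR} can be deduced from \eqref{eq:firstodeYnL} by conjugation, combining $\big(Y^{\mathsf L}_n\big)^{-1}=\begin{bsmallmatrix}0_N&I_N\\-I_N&0_N\end{bsmallmatrix}Y^{\mathsf R}_n\begin{bsmallmatrix}0_N&-I_N\\I_N&0_N\end{bsmallmatrix}$ from Corollary~\ref{teo:RRHPinverse} with the identity $M^{\mathsf R}_n=-\begin{bsmallmatrix}0_N&-I_N\\I_N&0_N\end{bsmallmatrix}M^{\mathsf L}_n\begin{bsmallmatrix}0_N&I_N\\-I_N&0_N\end{bsmallmatrix}$.

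I do not anticipate a real obstacle: the argument is block-matrix bookkeeping plus two applications of the Pearson equations. The points that need care are the sign produced by differentiating the inverse blocks $(W^{\mathsf R})^{-1}$ and $(W^{\mathsf L})^{-1}$, the left-versus-right placement of $h^{\mathsf L},h^{\mathsf R}$ in \eqref{eq:partial_Pearson_L} (which is exactly what makes the collapses $(W^{\mathsf R})^{-1}W^{\mathsf R}h^{\mathsf R}=h^{\mathsf R}$ and $h^{\mathsf L}W^{\mathsf L}(W^{\mathsf L})^{-1}=h^{\mathsf L}$ work), and the reading of \eqref{eq:firstodeYnL}--\eqref{eq:firstodeYnR} as identities among matrices of entire functions, which is licit precisely because the preceding theorem guarantees that $zM^{\mathsf L}_n$ and $zM^{\mathsf R}_n$ have at worst a removable singularity at the origin and are holomorphic on $\C\setminus\gamma$ with no jump across $\gamma$.
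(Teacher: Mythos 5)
Your proposal is correct and follows essentially the same route as the paper: the authors prove the proposition by appealing to the definition of $M^{\mathsf L}_n$, $M^{\mathsf R}_n$ in \eqref{eq:Mn}, and the key identity $M^{\mathsf L}_n=\big(Y^{\mathsf L}_n\big)'\big(Y^{\mathsf L}_n\big)^{-1}+\tfrac1z Y^{\mathsf L}_n\begin{bsmallmatrix}h^{\mathsf L}&0_N\\0_N&-h^{\mathsf R}\end{bsmallmatrix}\big(Y^{\mathsf L}_n\big)^{-1}$, obtained exactly as you do by differentiating \eqref{eq:zn1} and inserting the Pearson equations \eqref{eq:partial_Pearson_L}, is the same one displayed in the proof of the meromorphy theorem. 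Your explicit block computation and sign bookkeeping for the inverse factors are the details the paper leaves to the reader.
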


\begin{proof}
Equations~\eqref{eq:firstodeYnL} and~\eqref{eq:firstodeYnR} follows immediately from the definition of the matrices $M^{\mathsf L}_n(z)$ and $M^{\mathsf R}_n(z)$ in~\eqref{eq:Mn}. 
\end{proof}


We introduce the $\mathcal N $ transform, 
$
\displaystyle 
\mathcal N (F(z)) = F^{\prime}(z)+\frac{F^2(z)}{z}
$.

\begin{pro}[Second order differential equation for the fundamental matrices] \label{prop:mn}
It holds that
\begin{align} \label{eq:edo1}
z \big(Y^\mathsf L_n\big)^{\prime\prime} 
+ \big(Y^\mathsf L_n\big)^{\prime} 
\begin{bmatrix} 
 2 h^{\mathsf L}+ I_N & 0_N \\ 
 0_N & - 2 h^{\mathsf R} + I_N
 \end{bmatrix}
+ Y_n^\mathsf L (z)
 \begin{bmatrix}
 \mathcal N (h^{\mathsf L}) & 0_N \\ 
 0_N &\mathcal N (- h^{\mathsf R}) 
 \end{bmatrix} 
&= \mathcal N ( \tilde{M}^\mathsf L_n )Y^\mathsf L_n ,
 \\
\label{eq:edo2}
z \big(Y^\mathsf R_n\big)^{\prime\prime} 
+
\begin{bmatrix} 
2 h^{\mathsf R} + I_N & 0_N\\ 
 0_N & - 2 h^{\mathsf L} + I_N 
 \end{bmatrix} 
 \big(Y^\mathsf R_n\big)^{\prime} 
 + 
\begin{bmatrix} 
 \mathcal N (h^{\mathsf R}) & 0_N\\ 
 0_N & \mathcal N (-h^{\mathsf L}) 
 \end{bmatrix} 
Y_n{^\mathsf R} (z)
 &= 
Y^\mathsf R_n \mathcal N (\tilde{M}^\mathsf R_n ).
 \end{align}
\end{pro}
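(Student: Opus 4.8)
The plan is to differentiate the first order systems \eqref{eq:firstodeYnL} and \eqref{eq:firstodeYnR} from Proposition~\ref{prop:mn} (the first order differential equation) and then eliminate the first derivative terms using the same first order relations. I will carry out the argument for $Y^{\mathsf L}_n$ in detail; the statement for $Y^{\mathsf R}_n$ follows by the analogous computation (or by the symmetry $\eqref{eq:ZLR}$--$\eqref{eq:Mn}$ relating the left and right objects).

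First I would rewrite \eqref{eq:firstodeYnL} as
\begin{align*}
z \big(Y^{\mathsf L}_n\big)^{\prime}
= \tilde M^{\mathsf L}_n Y^{\mathsf L}_n - Y^{\mathsf L}_n \mathcal H^{\mathsf L},
\qquad
\mathcal H^{\mathsf L} :=
\begin{bmatrix}
h^{\mathsf L} & 0_N \\ 0_N & -h^{\mathsf R}
\end{bmatrix},
\end{align*}
and differentiate both sides. On the left I get $\big(Y^{\mathsf L}_n\big)^{\prime} + z\big(Y^{\mathsf L}_n\big)^{\prime\prime}$; on the right, using the Leibniz rule, $\big(\tilde M^{\mathsf L}_n\big)^{\prime} Y^{\mathsf L}_n + \tilde M^{\mathsf L}_n \big(Y^{\mathsf L}_n\big)^{\prime} - \big(Y^{\mathsf L}_n\big)^{\prime}\mathcal H^{\mathsf L} - Y^{\mathsf L}_n \big(\mathcal H^{\mathsf L}\big)^{\prime}$. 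Next I substitute $\big(Y^{\mathsf L}_n\big)^{\prime} = \tfrac1z\big(\tilde M^{\mathsf L}_n Y^{\mathsf L}_n - Y^{\mathsf L}_n \mathcal H^{\mathsf L}\big)$ wherever a bare first derivative appears multiplied by $\tilde M^{\mathsf L}_n$ on the right, so that the term $\tilde M^{\mathsf L}_n\big(Y^{\mathsf L}_n\big)^{\prime}$ becomes $\tfrac1z\big((\tilde M^{\mathsf L}_n)^2 Y^{\mathsf L}_n - \tilde M^{\mathsf L}_n Y^{\mathsf L}_n \mathcal H^{\mathsf L}\big)$. Multiplying through by $z$ and collecting the $Y^{\mathsf L}_n$-coefficient terms, the combination $\big(\tilde M^{\mathsf L}_n\big)^{\prime} + \tfrac1z(\tilde M^{\mathsf L}_n)^2 = \mathcal N(\tilde M^{\mathsf L}_n)$ appears on the right, and symmetrically the combination $\big(\mathcal H^{\mathsf L}\big)^{\prime} + \tfrac1z(\mathcal H^{\mathsf L})^2 = \mathcal N(\mathcal H^{\mathsf L})$ appears on the left (using that $\mathcal N$ acts blockwise, so $\mathcal N(\mathcal H^{\mathsf L}) = \operatorname{diag}(\mathcal N(h^{\mathsf L}),\mathcal N(-h^{\mathsf R}))$). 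The leftover first-derivative terms coming from $\big(Y^{\mathsf L}_n\big)^{\prime}$ (the one from the left side) and $-\big(Y^{\mathsf L}_n\big)^{\prime}\mathcal H^{\mathsf L}$ and the cross term $-\tfrac1z \tilde M^{\mathsf L}_n Y^{\mathsf L}_n\mathcal H^{\mathsf L}$ versus $+\tfrac1z Y^{\mathsf L}_n (\mathcal H^{\mathsf L})^2$ must be reorganized; after moving $z\big(Y^{\mathsf L}_n\big)^{\prime\prime}$ and the surviving $\big(Y^{\mathsf L}_n\big)^{\prime}$ terms to the left I should land exactly on \eqref{eq:edo1}, with the coefficient $2h^{\mathsf L}+I_N$ arising as $\mathcal H^{\mathsf L}$ (from $-\big(Y^{\mathsf L}_n\big)^{\prime}\mathcal H^{\mathsf L}$) plus another $\mathcal H^{\mathsf L}$ (from re-substituting into the cross term) plus $I_N$ (from the $\big(Y^{\mathsf L}_n\big)^{\prime}$ produced by differentiating $z\big(Y^{\mathsf L}_n\big)^{\prime}$).

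The main obstacle is purely organizational: the cross terms $-\tfrac1z\tilde M^{\mathsf L}_n Y^{\mathsf L}_n \mathcal H^{\mathsf L}$ and $+\tfrac1z Y^{\mathsf L}_n (\mathcal H^{\mathsf L})^2$ do not individually cancel, and one must re-express $\tilde M^{\mathsf L}_n Y^{\mathsf L}_n$ via the first order equation once more — namely $\tfrac1z\tilde M^{\mathsf L}_n Y^{\mathsf L}_n = \big(Y^{\mathsf L}_n\big)^{\prime} + \tfrac1z Y^{\mathsf L}_n\mathcal H^{\mathsf L}$ — so that the cross terms combine into a genuine $\big(Y^{\mathsf L}_n\big)^{\prime}\mathcal H^{\mathsf L}$ contribution plus terms that telescope with $\tfrac1z Y^{\mathsf L}_n(\mathcal H^{\mathsf L})^2$. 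Keeping track of which substitution is applied where, and making sure no spurious $\tfrac1z$ singular term survives (it should not, since every object in \eqref{eq:edo1} is at worst meromorphic with the singular part already absorbed into $\mathcal N(\tilde M^{\mathsf L}_n)$), is the only delicate bookkeeping step. Once that is done for the left case, I would remark that applying the involution $\begin{bsmallmatrix}0&-I_N\\ I_N&0\end{bsmallmatrix}(\,\cdot\,)\begin{bsmallmatrix}0&I_N\\ -I_N&0\end{bsmallmatrix}$ together with $\eqref{eq:ZLR}$ and the relation between $M^{\mathsf R}_n$ and $M^{\mathsf L}_n$ transforms \eqref{eq:edo1} into \eqref{eq:edo2}, or alternatively repeat the same differentiation-and-substitution directly on \eqref{eq:firstodeYnR}; either way completes the proof.
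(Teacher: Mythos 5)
Your proposal is correct and follows essentially the same route as the paper: differentiate the first order relation once and re-substitute it to eliminate the bare first derivatives, so that the combination $\big(\tilde M^{\mathsf L}_n\big)^{\prime}+\tfrac1z\big(\tilde M^{\mathsf L}_n\big)^2=\mathcal N\big(\tilde M^{\mathsf L}_n\big)$ appears on the right and $\mathcal N$ of the block-diagonal Pearson coefficient appears on the left. The only (cosmetic) difference is that the paper performs the differentiation at the level of the constant jump matrix $Z^{\mathsf L}_n$, where $\big(Z^{\mathsf L}_n\big)^{\prime}=M^{\mathsf L}_n Z^{\mathsf L}_n$, and then translates back to $Y^{\mathsf L}_n$ via \eqref{eq:zn1} and \eqref{eq:partial_Pearson_L}, whereas you differentiate \eqref{eq:firstodeYnL} directly; your bookkeeping of the cross terms is right and lands exactly on \eqref{eq:edo1}.
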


\begin{proof}
Differentiating in \eqref{eq:Mn} we get
\begin{align*}
 \big(Z^{\mathsf L}_n \big)^{\prime \prime} \big(Z^{\mathsf L}_{n}\big)^{-1} = \frac{ \big(\tilde{M}^\mathsf L_n \big)^{\prime} }{z} -\frac{ \tilde{M}^\mathsf L_n }{z^2} 
 +
\frac{ (\tilde{M}^\mathsf L_n )^2}{z^2} ,
\end{align*}
so that
\begin{align*}
z \big(Z^{\mathsf L}_n \big)^{\prime \prime} \big(Z^{\mathsf L}_{n}\big)^{-1} + \big(Z^{\mathsf L}_n \big)^{\prime} \big(Z^{\mathsf L}_{n}\big)^{-1} 
= \big(\tilde{M}^\mathsf L_n \big)^{\prime} 
 +
\frac{ (\tilde{M}^\mathsf L_n )^2 }{z} .
\end{align*}
Now, using~\eqref{eq:zn1} and~\eqref{eq:partial_Pearson_L}, we get the stated result~\eqref{eq:edo1}. The equation~\eqref{eq:edo2} follows in a similar way from definition of $M_n^{\mathsf R}$ in~\eqref{eq:Mn}.
\end{proof}

We introduce the fol\-low\-ing $\C^{2N\times 2N}$ valued functions
\begin{align*}
\mathsf H_n^{\mathsf L} =
\begin{bmatrix}
 \mathsf H_{1,1,n}^{\mathsf L} & \mathsf H_{1,2,n}^{\mathsf L} \\[.05cm] 
 \mathsf H_{2,1,n}^{\mathsf L} & \mathsf H_{2,2,n}^{\mathsf L} 
\end{bmatrix}
 &
 : = 
 \mathcal N (\tilde{M}^\mathsf L_n ), &
\mathsf H_n^{\mathsf R} =
\begin{bmatrix}
\mathsf H_{1,1,n}^{\mathsf R} & \mathsf H_{1,2,n}^{\mathsf R} \\[.05cm]
 \mathsf H_{2,1,n}^{\mathsf R} & \mathsf H_{2,2,n}^{\mathsf R} 
\end{bmatrix}
&
:= \mathcal N (\tilde{M}^\mathsf R_n ).
\end{align*}
It holds~that the second order matrix differential equations~\eqref{eq:edo1} and~\eqref{eq:edo2} split in the fol\-low\-ing differential relations
\begin{align}
\label{eq:secondorderpl}
z \big(P_n^{\mathsf L} \big)^{\prime \prime} + \big( P_n^{\mathsf L} \big)^{ \prime}\big( 2 h^{\mathsf L} + I_N \big)
+
P_n^{\mathsf L} \mathcal N ( h^{\mathsf L}) 
& = \mathsf H_{1,1,n}^{\mathsf L} P_n^{\mathsf L} - \mathsf H_{1,2,n}^{\mathsf L} C_{n-1}P_{n-1}^{\mathsf L} , 
 \\
\label{eq:secondorderql}
z \big(Q_n^{\mathsf L} \big)^{\prime \prime}+\big(Q_n^{\mathsf L} \big)^{ \prime}\big( -2 h^{\mathsf R} + I_N \big)
+
Q_n^{\mathsf L} \mathcal N ( -h^{\mathsf R} ) & =
 \mathsf H_{1,1,n}^{\mathsf L} Q_n^{\mathsf L} - \mathsf H_{1,2,n}^{\mathsf L} C_{n-1} Q_{n-1}^{\mathsf L} ,
 \\
\label{eq:secondorderpr}
z \big(P_n^{\mathsf R} \big)^{\prime \prime} + \big( 2 h^{\mathsf R} + I_N \big) \big(P_n^{\mathsf R} \big)^{ \prime}
+ \mathcal N ( h^{\mathsf R} ) P_n^{\mathsf R} 
 &= 
P_n^{\mathsf R} \mathsf H_{1,1,n}^{\mathsf R} 
- P_{n-1}^{\mathsf R} C_{n-1}\mathsf H_{2,1,n}^{\mathsf R} ,\\
\label{eq:secondorderqr}
z \big(Q_n^{\mathsf R} \big)^{\prime \prime}+ \big( - 2 h^{\mathsf L} + I_N \big) \big(Q_n^{\mathsf R} \big)^{ \prime}
+ \mathcal N (- h^{\mathsf L}) Q_n^{\mathsf R} 
 &= 
Q_n^{\mathsf R} \mathsf H_{1,1,n}^{\mathsf R} 
- Q_{n-1}^{\mathsf R} C_{n-1}\mathsf H_{2,1,n}^{\mathsf R}.
\end{align}
Now, we illustrate these constructions with the example discussed in \S~\ref{sec:example}. 
Using the identities $p^1_{R,n} = -q^1_{L,n-1}$ and $p^1_{L,n} = -q^1_{R,n-1}$ we can get 
\begin{gather*}
 M^{\mathsf L}_n (z)= \frac{\tilde{M}^{\mathsf L}_n (z)}{z}
 = \frac{1}{z}
 \begin{bmatrix} 
A_1 z +[ p^1_{L,n}, A_1 ] + n I_N + \frac{\alpha}{2} & A_1 C_n^{-1} + C_n^{-1} A_2 \\[.15cm]
-C_{n-1} A_1 -A_2 C_{n-1} & -A_2 z + [ p^1_{R,n} ,A_2 ] - n I_N- \frac{\alpha}{2} 
 \end{bmatrix} .
\end{gather*}
Using \eqref{eq:edo1}, we obtain the second order differential equation
\begin{gather*}
\begin{multlined}[t][0.75\textwidth]
z \big(Y^{\mathsf L}_n \big)^{\prime \prime} + \big(Y^{\mathsf L}_n \big)^{\prime} \begin{bmatrix} 
 \alpha + I_N + 2 A_1 z & 0_N \\
0_N & I_N-\alpha -2 A_2 z
 \end{bmatrix} 
\\ + Y^{\mathsf L}_n 
 \begin{bmatrix} 
 A_1 +\frac{1}{2} A_1 \alpha + \frac{1}{2} \alpha A_1 + z {A_1}^2 & 0_N\\
0_N & -A_2 +\frac{1}{2} A_2 \alpha + \frac{1}{2} \alpha A_2 + z {A_2}^2
 \end{bmatrix} + \frac{1}{z} Y^{\mathsf L}_n \begin{bmatrix} 
 (\frac{\alpha}{2})^2 & 0_N \\
0_N & (\frac{\alpha}{2})^2 
 \end{bmatrix} 
\end{multlined}
 \\
= \Big( (\tilde{M}^{\mathsf L}_n )^{\prime} (0) + (\tilde{M}^{\mathsf L}_n (0) )^2\frac{1}{z} + (\tilde{M}^{\mathsf L}_n)^{\prime} (0) \tilde{M}^{\mathsf L}_n (0)+ \tilde{M}^{\mathsf L}_n (0) (\tilde{M}^{\mathsf L}_n)^{\prime} (0) + \big( (\tilde{M}^{\mathsf L}_n )^{\prime} (0) \big)^2 z \Big) Y^{\mathsf L}_n (z)
\end{gather*}

As we have proven in \S~\ref{sec:example} for Durán--Grünbaum Laguerre type matrices of weights, under the restriction $[\alpha,A_1]=[\alpha,A_2]=0_N$, and the spectrum of $\alpha$ is contained on $(-1,+\infty)\setminus \mathbb N$ the matrix
$ {M}^{\mathsf L}_n = \big(Z^{\mathsf L}_n \big)^{\prime} \big(Z^{\mathsf L}_n \big)^{-1} $ 
has a pole of order $1$ at $z=0$, with residue given~by
\begin{align*}
 \tilde{M}^{\mathsf L}_n (0) = F^{\mathsf L}_n(0) 
\begin{bmatrix} 
\frac{\alpha}{2} &0_N \\ 0_N &- \frac{\alpha}{2}
\end{bmatrix} 
\big( F^{\mathsf L}_n (0)\big)^{-1} .
\end{align*}
If we now also assume on the matrix $\alpha$ that
 $\alpha^2 = \lambda I_N$,  we get
 \begin{align*}
 (\tilde{M}^{\mathsf L}_n (0))^2 = F^{\mathsf L}_n(0) 
\begin{bmatrix} 
\big( \frac{\alpha}{2} \big)^2 &0_N \\ 0_N &
\big( \frac{\alpha}{2} \big)^2 
\end{bmatrix} 
\big( F^{\mathsf L}_n (0)\big)^{-1} = \frac{\lambda}{4} I_{2N}.
\end{align*}
We remark that as the spectrum of $\alpha$ is contained in $(-1,+\infty)\setminus \mathbb N$ when $|\lambda|<1$  the $\pm \lambda$ are admissible eigenvalues for $\alpha$, and when $|\lambda|>1$ only positive and bigger than $1$ eigenvalues are admissible for $\alpha$, and then $\alpha = \lambda I_N$.
In an analogous way we obtain for $\alpha = m I_N$, $m \in \N 
$
 \begin{align*}
 (\tilde{M}^{\mathsf L}_n (0))^2 = F^{\mathsf L}_n(0) 
\begin{bmatrix} 
\big( \frac{m}{2} \big)^2 &0_N \\ 0_N &
\big( \frac{m}{2} \big)^2 
\end{bmatrix} 
\big( F^{\mathsf L}_n (0)\big)^{-1} = \frac{m^2}{4} I_{2N}.
\end{align*}

In both cases the second order differential equation is simplified to
\begin{multline*}
\phantom{o}\hspace{-.5cm}z \big(Y^{\mathsf L}_n \big)^{\prime \prime} + \big(Y^{\mathsf L}_n \big)^{\prime} \begin{bmatrix} 
 \alpha + I_N + 2 A_1 z & 0_N \\
0_N & I_N-\alpha -2 A_2 z
 \end{bmatrix} 
+ Y^{\mathsf L}_n 
 \begin{bmatrix} 
 A_1 + A_1 \alpha  + {A_1}^2 z & 0_N \\
0_N & -A_2 + A_2 \alpha  +  {A_2}^2 z
 \end{bmatrix} 
 \\
= \begin{bmatrix} 
A_1 +[p^1_{L,n},A_1^2] + ( nI_N + \alpha )A_1 + A_1^2 z & 
A_1^2 C_n^{-1} - C_n^{-1} A_2^2 
 \\
- C_{n-1}A_1^2+ A_2^2C_{n-1} & 
-A_2 -[p^1_{R,n},A_2^2] + ( nI_N + \alpha  ) A_2+ A_2^2 z 
\end{bmatrix} Y^{\mathsf L}_n (z) .
\end{multline*}
Notice that this equation has no pole at zero as it happens in the scalar Laguerre case.
In fact, in the scalar case
this equation 
reduces to
\begin{multline*}
z \big(Y^{\mathsf L}_n \big)^{\prime \prime} + \big(Y^{\mathsf L}_n \big)^{\prime} \begin{bmatrix} 
 \alpha + 1 + 2 A_1 z & 0 \\
0 & 1 -\alpha -2 A_1 z
 \end{bmatrix} 
+ Y^{\mathsf L}_n 
 \begin{bmatrix} 
 A_1 + A_1 \alpha  + {A_1}^2 z & 0 \\
0 & -A_1 + A_1 \alpha  +  {A_1}^2 z
 \end{bmatrix} 
 \\
= \begin{bmatrix} 
A_1  + ( n + \alpha )A_1 + A_1^2 z & 
 0 
 \\
 0 & 
-A_1 + ( n + \alpha  ) A_1+ A_1^2 z 
\end{bmatrix} Y^{\mathsf L}_n (z) ,
\end{multline*} 
as $A_1^2 C_n^{-1} = C_n^{-1} A_1^2 $ and $A_1 = A_2=-
1 /2 $, and so
\begin{gather*}
z \big(Y^{\mathsf L}_n \big)^{\prime \prime} + \big(Y^{\mathsf L}_n \big)^{\prime} \begin{bmatrix} 
 \alpha + 1 -  z & 0 \\
0 & 1-\alpha +  z
 \end{bmatrix} 
+ Y^{\mathsf L}_n 
 \begin{bmatrix} 
 -
 1 / 2 
 & 0 \\
0 & 
1 / 2 
 \end{bmatrix} 
= \begin{bmatrix} 
- 
({ n +1})/{2}  
  & 
 0 
 \\
 0 & 
- 
({ n -1})/{2}  
\end{bmatrix} Y^{\mathsf L}_n (z) ,
\end{gather*} 
and so 
we get the second order equations for the $ \big\{ P_n \big\}_{ n  \in \mathbb N} $ (cf. for example~\cite{chihara}) and $\big\{ Q_n \big\}_{ n  \in \mathbb N} $ in the Laguerre case, i.e. for all $n \in \mathbb N$ we have
\begin{align*}
z P_n^{\prime\prime} (z) - ( z - \alpha -1) P_n^{\prime} (z) &= - n  P_n (z) , \\ 
z Q_n^{\prime\prime} (z) + ( z - \alpha +1 ) Q_n^{\prime} (z) &= - ( n + 1 ) Q_n (z) .
\end{align*}

\subsection{Adjoint operators}

\begin{defi}
Given linear operator $L : \C^{N\times N}[z] \to \C^{N\times N}[z]
 $ 
and a matrix of weights $ W$, its adjoint operator $L^*$ is an operator such that
\begin{align*}
\langle L(P), \tilde P \rangle_{ W}&=\langle P, L^*(\tilde P )\rangle_{ W}, & 
P(z),\tilde P(z)\in \C^{N\times N}[z],
\end{align*}
in terms of the sesquiliner form introduced in~\eqref{eq:sesquilinear}.
\end{defi}

Care must be taken at this point because in this definition of adjoint of a matrix differential operator we are not taken the transpose or the Hermitian conjugate of the matrix coefficients as was done in~\cite{duran_3}.

\begin{defi}
Motivated by~\eqref{eq:secondorderpl} and~\eqref{eq:secondorderpr} we introduce two linear operators $\pmb \ell^{\mathsf L}$ and $\pmb \ell^{\mathsf R}$, acting on the linear space of polynomials $\C^{N \times N} [z]$ as follows
\begin{align*}
\pmb \ell^{\mathsf L} (P) :
=z P^{\prime\prime} + P^{\prime} a^{\mathsf L}(z) + P b^{\mathsf L}(z) ,
 &&
\pmb \ell^{\mathsf R} (P) :
= z P^{\prime\prime} + a^{\mathsf R}(z) P^{\prime} + b^{\mathsf R}(z) P .
\end{align*}
where
$a^{\mathsf L}(z) 
 : = 2 h^{\mathsf L} + I_N $, 
$ b^{\mathsf L}(z) 
 : = \mathcal N ( h^{\mathsf L}(z) )$, 
$ a^{\mathsf R}(z) 
 : = 2 h^{\mathsf R} + I_N$, 
$ b^{\mathsf R}(z) 
: = \mathcal N ( h^{\mathsf R}(z) )$.
\end{defi}

\begin{pro} \label{prop:eqrho}
Let us assume that the matrix of weights $ W 
$ do satisfy the fol\-low\-ing boundary conditions
\begin{align}\label{eq:bconditions}
 zW \big|_0^\infty &=0_N, & \big( \big(z W \big)^{\prime} - a^{\mathsf L} W \big) \big|_0^\infty &=0_N , & \big( \big(z W \big)^{\prime} -W a^{\mathsf R} \big) \big|_0^\infty &=0_N.
 \end{align}
 Here 
$
\displaystyle
f(z)\big|_0^\infty:=\lim_{z\to\infty}f(z)- \lim_{z\to 0}f(z)
$.
Then,
$ W$ satisfies a Pearson differential equation~\eqref{eq:Pearson} if, and only if,
$ W$ satisfies the fol\-low\-ing second order matrix differential equations
\begin{align}
\label{eq:rhoml}
\big( z W \big)^{\prime \prime} - \big( a^{\mathsf L} W\big)^{ \prime} + b^{\mathsf L} W = W b^{\mathsf R} , \\
\label{eq:rhomr}
\big( z W \big)^{\prime \prime} - \big( W a^{\mathsf R} \big)^{ \prime} + W b^{\mathsf R} =b^{\mathsf L} W . 
\end{align}
\end{pro}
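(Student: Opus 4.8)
The plan is to translate the boundary conditions into statements about integration by parts in the sesquilinear form, and then to show the equivalence directly. First I would recall that the Pearson equation \eqref{eq:Pearson} reads $zW' = h^{\mathsf L}W + Wh^{\mathsf R}$, equivalently $(zW)' = (h^{\mathsf L}+I_N)W + Wh^{\mathsf R} = a^{\mathsf L}W - W + Wh^{\mathsf R}$, and symmetrically $(zW)' = Wa^{\mathsf R} - W + h^{\mathsf L}W$. So $(zW)' - a^{\mathsf L}W = Wh^{\mathsf R} - W = W(h^{\mathsf R}-I_N)$ and $(zW)' - Wa^{\mathsf R} = h^{\mathsf L}W - W = (h^{\mathsf L}-I_N)W$; this is the natural "first integral'' form of the Pearson equation that the boundary conditions \eqref{eq:bconditions} are designed to kill at the endpoints.

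For the forward implication, assume $W$ solves \eqref{eq:Pearson}. Differentiate the identity $(zW)' - a^{\mathsf L}W = W(h^{\mathsf R}-I_N)$ once more to get $(zW)'' - (a^{\mathsf L}W)' = (W(h^{\mathsf R}-I_N))' = W'(h^{\mathsf R}-I_N) + Wh^{\mathsf R\prime} - W'$, where I use that $h^{\mathsf R}$ need not commute with $W$ so the product rule must be kept ordered. I would then rewrite the right-hand side using $W' = \tfrac1z(h^{\mathsf L}W + Wh^{\mathsf R})$ and collect terms so that everything on the right is $W$ times the combination $h^{\mathsf R\prime} + \tfrac1z (h^{\mathsf R})^2 = \mathcal N(h^{\mathsf R}) = b^{\mathsf R}$, plus a leftover $b^{\mathsf L}W$ coming from the $h^{\mathsf L}$-part; the bookkeeping here is exactly the same algebra that produced \eqref{eq:edo1}–\eqref{eq:edo2} from differentiating \eqref{eq:Mn}, so I expect it to go through cleanly and yield \eqref{eq:rhoml}. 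Equation \eqref{eq:rhomr} is obtained identically starting from $(zW)' - Wa^{\mathsf R} = (h^{\mathsf L}-I_N)W$ and differentiating, using the left factorization. Note this direction does not use the boundary conditions at all.

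For the converse, suppose $W$ satisfies \eqref{eq:rhoml}. Set $E(z) := (zW)' - a^{\mathsf L}W - W(h^{\mathsf R}-I_N)$; the goal is to show $E\equiv 0$, since $E=0$ is precisely \eqref{eq:Pearson} rewritten. Differentiating $E$ and substituting \eqref{eq:rhoml} together with $b^{\mathsf L}=\mathcal N(h^{\mathsf L})$, $b^{\mathsf R}=\mathcal N(h^{\mathsf R})$, one finds that $E$ satisfies a first-order linear homogeneous ODE of the form $E' = \tfrac1z\, E\, h^{\mathsf R} + (\text{stuff})\cdot E$ — i.e. $E$ obeys a linear system with a regular singular point at $0$ — so that $E$ is determined by any one "initial condition,'' and the content of the boundary conditions \eqref{eq:bconditions} is exactly that $\lim_{z\to 0} zE(z) = 0$ and that the relevant combinations vanish, forcing $E\equiv 0$ on $\gamma$. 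The subtle point, and the step I expect to be the main obstacle, is justifying this rigidity: $h^{\mathsf L},h^{\mathsf R}$ are only entire matrix functions with no smallness or spectral hypotheses imposed in this proposition, so I cannot simply invoke uniqueness of ODE solutions on a punctured neighbourhood of a regular singular point without care about the monodromy and about which solution branch the boundary data selects. I would handle this by working on the curve $\gamma$ (where $W$ is the genuine matrix of weights), using the two boundary identities in \eqref{eq:bconditions} beyond the first to pin down $E$ and its first "moment'' at the endpoints, and appealing to the structure of \eqref{eq:partial_Pearson_L}–type factorizations established earlier to conclude. The symmetric argument starting from \eqref{eq:rhomr} recovers the same Pearson equation, closing the equivalence.
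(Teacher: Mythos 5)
Your forward direction is essentially the paper's computation, but it rests on a miscalculated identity. From $zW^{\prime}=h^{\mathsf L}W+Wh^{\mathsf R}$ and $a^{\mathsf L}=2h^{\mathsf L}+I_N$ one gets $(zW)^{\prime}-a^{\mathsf L}W=Wh^{\mathsf R}-h^{\mathsf L}W$, \emph{not} $W(h^{\mathsf R}-I_N)$: your step $(h^{\mathsf L}+I_N)W=a^{\mathsf L}W-W$ is false, since $a^{\mathsf L}W-W=2h^{\mathsf L}W$. The correct symmetric ``first integral'' is $E:=zW^{\prime}-h^{\mathsf L}W-Wh^{\mathsf R}$; note that $2E$ is exactly the sum of the expressions in the second and third boundary conditions of~\eqref{eq:bconditions}. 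Once $E$ is corrected, differentiating the Pearson equation and eliminating $W^{\prime}$ does yield~\eqref{eq:rhoml} and~\eqref{eq:rhomr} (this is the same bookkeeping the paper carries out by expanding $(zW)^{\prime\prime}$, $(a^{\mathsf L}W)^{\prime}$ and $(Wa^{\mathsf R})^{\prime}$ separately), and you are right that no boundary conditions enter here.

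The genuine gap is in the converse, precisely at the step you flag as ``the main obstacle.'' You try to deduce~\eqref{eq:Pearson} from~\eqref{eq:rhoml} \emph{alone} by showing that the defect $E$ obeys a first--order linear system and then invoking uniqueness. Indeed~\eqref{eq:rhoml} alone gives $zE^{\prime}=h^{\mathsf L}E-Eh^{\mathsf R}$, but this Sylvester equation has many nonzero solutions (of the form $W^{\mathsf L}C\,(W^{\mathsf R})^{-1}$ with $C$ constant), and the conditions~\eqref{eq:bconditions} --- which only assert equality of limits at $0$ and $\infty$ --- cannot exclude them without spectral hypotheses on $h^{\mathsf L}(0)$, $h^{\mathsf R}(0)$ that the proposition does not assume; your appeal to ``the structure of the factorizations'' is not an argument. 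The paper closes the converse with a two--line observation you miss: \emph{add} \eqref{eq:rhoml} and~\eqref{eq:rhomr}. The $b^{\mathsf L}$ and $b^{\mathsf R}$ terms cancel, leaving $2(zW)^{\prime\prime}=(a^{\mathsf L}W)^{\prime}+(Wa^{\mathsf R})^{\prime}$, which is exactly $E^{\prime}=0_N$; thus $E$ is a constant matrix, and the boundary conditions then force it to vanish. Using both second--order equations simultaneously is essential --- a single second--order equation has a strictly larger solution set than the first--order Pearson equation --- so the route you propose does not merely need more care, it needs to be replaced by the summation argument.
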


\begin{proof}
Taking derivative on~\eqref{eq:Pearson}, we get 
\begin{align*}
{ W}^{\prime \prime} & = \Big( \frac{h^{\mathsf L}}{z} \Big)^{\prime} W + \frac{h^{\mathsf L}}{z} W^{\prime}+ W^{\prime} \frac{h^{\mathsf R}}{z} +
 W \Big( \frac{h^{\mathsf R}}{z} \Big)^{\prime} 
 \\
 & = \Big( \frac{ (h^{\mathsf L})^{\prime}}{z} - \frac{h^{\mathsf L}}{z^2} \Big) W +\frac{h^{\mathsf L}}{z} \Big( \frac{h^{\mathsf L}}{z} W + W \frac{h^{\mathsf R}}{z} \Big)+ \Big( \frac{h^{\mathsf L}}{z} W + W \frac{h^{\mathsf R}}{z} \Big) \frac{h^{\mathsf R}}{z}+
 W \Big(\frac{ (h^{\mathsf R} )^{\prime}}{z} -\frac{h^{\mathsf R}}{z^2} \Big),
\end{align*}
so it holds that
\begin{align*}
(z W)^{\prime \prime} & = 2 { W}^{\prime } + z { W}^{\prime \prime} 
 \\
 & =
2 { W}^{\prime } + \Big( (h^{\mathsf L})^{\prime} - \frac{h^{\mathsf L}}{z} \Big) W + h^{\mathsf L} \Big( \frac{h^{\mathsf L}}{z} W + W \frac{h^{\mathsf R}}{z} \Big)+ \Big( \frac{h^{\mathsf L}}{z} W + W \frac{h^{\mathsf R}}{z} \Big) h^{\mathsf R}+
 W \Big( (h^{\mathsf R} )^{\prime} -\frac{h^{\mathsf R}}{z} \Big)\\
 & = 
{ W}^{\prime } + b^{\mathsf L} W + W b^{\mathsf R}+ \frac{2}{z} h^{\mathsf L} W h^{\mathsf R}.
\end{align*}
But, it is easy to see that
\begin{align*}
\big( a^{\mathsf L} W\big)^{ \prime} = 2 (h^{\mathsf L})^{\prime}W + 2 \frac{(h^{\mathsf L})^2}{z} W + \frac{2}{z} h^{\mathsf L}W h^{\mathsf R} + W^{\prime}
= { W}^{\prime } + 2 b^{\mathsf L} W + \frac{2}{z} h^{\mathsf L} W h^{\mathsf R} ,
\end{align*}
and
\begin{align*}
\big( W a^{\mathsf R} \big)^{ \prime} = 2W (h^{\mathsf R})^{\prime} + 2 W \frac{(h^{\mathsf R})^2}{z} + \frac{2}{z} h^{\mathsf L}W h^{\mathsf R} + W^{\prime}
\big( W a^{\mathsf R} \big)^{ \prime} = { W}^{\prime } + 2 W b^{\mathsf R}+ \frac{2}{z} h^{\mathsf L} W h^{\mathsf R},
\end{align*}
and so we arrive to~\eqref{eq:rhoml} and~\eqref{eq:rhomr}.

The reciprocal result is a consequence of adding the equations~\eqref{eq:rhoml},~\eqref{eq:rhomr} and using the boundary conditions~\eqref{eq:bconditions}.
\end{proof}

Now, we will see that these two operators are adjoint to each other with respect to the sesquilinear form induced by the weight functions $ W$.
\begin{pro} \label{prop:aa}
Whenever $ W $ satisfies~\eqref{eq:Pearson} and the boundary conditions~\eqref{eq:bconditions}, we have that
\begin{align}\label{eq:adjoint_ell}
 \pmb \ell^{\mathsf R}= \big(\pmb \ell^{\mathsf L}\big)^*.
\end{align} 
\end{pro}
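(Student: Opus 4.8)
The plan is to establish the adjointness relation $\pmb\ell^{\mathsf R}=(\pmb\ell^{\mathsf L})^*$ directly from the definition of the sesquilinear form, by computing $\langle \pmb\ell^{\mathsf L}(P),Q\rangle_W$ for arbitrary matrix polynomials $P,Q$ and transferring all derivatives off of $P$ and onto $Q$ via integration by parts along $\gamma$, using the Pearson equation~\eqref{eq:Pearson} to absorb the derivatives of $W$ that arise, and the boundary conditions~\eqref{eq:bconditions} to kill all the endpoint terms at $0$ and $\infty$. Concretely, I would start from
\[
\langle \pmb\ell^{\mathsf L}(P),Q\rangle_W=\frac1{2\pi\ii}\int_\gamma\Big(zP''+P'a^{\mathsf L}+Pb^{\mathsf L}\Big)WQ\,\d z,
\]
and integrate by parts twice on the $zP''$ term and once on the $P'a^{\mathsf L}$ term, each time producing a boundary contribution of the schematic form $\big[(\text{polynomial in }P,P')\cdot(\text{expression in }W,W')\cdot Q\big]_0^\infty$.

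The key step is to recognize that, after the integrations by parts, the coefficient multiplying $P$ inside the integral is precisely the left-hand side of the second-order differential equation~\eqref{eq:rhoml} for $W$, namely $(zW)''-(a^{\mathsf L}W)'+b^{\mathsf L}W$, which by Proposition~\ref{prop:eqrho} equals $Wb^{\mathsf R}$; and the coefficient multiplying $P'$ assembles into $2(zW)'-a^{\mathsf L}W-zW'$, which one rewrites using the Pearson equation~\eqref{eq:Pearson}. What one wants to land on is
\[
\langle \pmb\ell^{\mathsf L}(P),Q\rangle_W=\frac1{2\pi\ii}\int_\gamma P\,W\Big(zQ''+a^{\mathsf R}Q'+b^{\mathsf R}Q\Big)\d z=\langle P,\pmb\ell^{\mathsf R}(Q)\rangle_W,
\]
so the bookkeeping must be arranged so that the terms regroup with $W$ on the left of the $Q$-block and the surviving first-order operator acting on $Q$ is exactly $zQ''+a^{\mathsf R}Q'+b^{\mathsf R}Q$. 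I expect that after the first round of integration by parts one gets terms like $zW'Q$, $WQ'z$, and $a^{\mathsf L}WQ$ paired against $P'$, and a further integration by parts on the $WQ'z$ piece brings the remaining derivative onto $(zWQ)'$-type expressions; the Pearson equation is what converts $zW'$ into $h^{\mathsf L}W+Wh^{\mathsf R}$ and thereby splits the middle terms so that $h^{\mathsf R}$ combines with $Q'$ and $Q$ to build $a^{\mathsf R}$ and $b^{\mathsf R}$.

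The main obstacle is the careful accounting of the boundary terms: there will be several of them (roughly three, one from the double integration by parts of $zP''$ giving a term with $(zW)'$ and one with $zW$, and one from $P'a^{\mathsf L}$ giving a term with $a^{\mathsf L}W$), and one must verify that the combinations that actually appear are exactly the three expressions $zW$, $(zW)'-a^{\mathsf L}W$, and their counterparts that are assumed to vanish at $0$ and $\infty$ in~\eqref{eq:bconditions}; a symmetric accounting on the right produces the combination $(zW)'-Wa^{\mathsf R}$, which is why~\eqref{eq:bconditions} lists all three. The algebra is routine once one is disciplined about keeping the matrix factors in order (the form is sesquilinear, not bilinear, so nothing commutes), but it is easy to mismatch a factor of $z$ or a sign; I would organize the computation by first treating the two boundary terms and showing they vanish under~\eqref{eq:bconditions}, then separately verifying the interior identity purely algebraically using~\eqref{eq:rhoml} and~\eqref{eq:Pearson}, and finally assembling the two to conclude~\eqref{eq:adjoint_ell}.
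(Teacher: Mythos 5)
Your plan is exactly the paper's proof: integrate by parts twice, kill the endpoint terms using~\eqref{eq:bconditions}, identify the zeroth-order coefficient of $Q$ as the left-hand side of~\eqref{eq:rhoml} (hence equal to $Wb^{\mathsf R}$ by Proposition~\ref{prop:eqrho}), and use the Pearson equation to rewrite the first-order coefficient. The only slip is in your intermediate expression for that first-order coefficient: it is $2(zW)'-a^{\mathsf L}W$ (with no extra $-zW'$), which equals $Wa^{\mathsf R}$ upon substituting $zW'=h^{\mathsf L}W+Wh^{\mathsf R}$ and $a^{\mathsf L}=2h^{\mathsf L}+I_N$.
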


\begin{proof}
By using the linearity of these operators it is sufficient to prove
\begin{align*}
\langle \pmb \ell^{\mathsf L} ( P_n^{\mathsf L} ) \, , \, P_k^{\mathsf R} \rangle_ W = \langle P_n^{\mathsf L} \, , \, \pmb \ell^{\mathsf R} ( P_k^{\mathsf R} ) \rangle_ W , && n , k \in \mathbb N . 
\end{align*}
If we omit, for the sake of simplicity, the $z$ dependence of the integrands in the integrals, we have
\begin{align*}
\langle \pmb \ell^{\mathsf L} ( P_n^{\mathsf L} ) \, , \, P_k^{\mathsf R} \rangle_ W 
 = 
\int_{\gamma} z (P_n^{\mathsf L})^{\prime\prime} \, W \, P_k^{\mathsf R} \, \operatorname d z
+ \int_{\gamma} (P_n^{\mathsf L})^{\prime} \, a^{\mathsf L} \, W \, P_k^{\mathsf R} \, \operatorname d z
+ \int_{\gamma} P_n^{\mathsf L} \, 
 b^{\mathsf L} \, W \, P_k^{\mathsf R} \, \operatorname d z ,
\end{align*}
and, using integration by parts, we find
\begin{align*}
\langle \pmb \ell^{\mathsf L} ( P_n^{\mathsf L} ) , P_k^{\mathsf R} \rangle_ W
 & = \big( z (P_n^{\mathsf L})^{\prime} W P_k^{\mathsf R}\big) \big|_{\partial\gamma} 
 - \int_{\gamma} (P_n^{\mathsf L})^{\prime} \Big( \big( z W P_k^{\mathsf R}\big)^{\prime} - a^{\mathsf L} W P_k^{\mathsf R} \Big) \d z
+ \int_{\gamma} P_n^{\mathsf L} 
b^{\mathsf L} W P_k^{\mathsf R} \d z
 \\
 & \hspace{-.085cm} 
\begin{multlined}[t][0.8\textwidth]
 =
 \big( z(P_n^{\mathsf L})^{\prime} W P_k^{\mathsf R}\big) \big|_{\partial\gamma} - \Big(P_n^{\mathsf L} \Big( \big(z W P_k^{\mathsf R}\big)^{\prime} - a^{\mathsf L} WP_k^{\mathsf R} \Big) \Big) \Big|_{\partial\gamma} \\
+ \int_{\gamma} P_n^{\mathsf L} \, \big( (z W \, P_k^{\mathsf R} )^{\prime\prime}
- ( a^{\mathsf L} \, W \, P_k^{\mathsf R} )^{\prime}
+b^{\mathsf L} \, W \, P_k^{\mathsf R} \big) 
\, \operatorname d z .
 \end{multlined}
\end{align*}
Now, considering the boundary conditions~\eqref{eq:bconditions}
and taking into account that
\begin{align*}
\big( z W \, P_k^{\mathsf R} \big)^{\prime\prime} & = \big( z W \big)^{\prime\prime} \, P_k^{\mathsf R} 
+ 2 \, \big( z W \big) ^{\prime} \, (P_k^{\mathsf R})^{\prime} + z W \, (P_k^{\mathsf R})^{\prime\prime} , 
 \\
( a^{\mathsf L} \, W \, P_k^{\mathsf R} )^{\prime} & = (a^{\mathsf L} \, W)^{\prime} \, P_k^{\mathsf R}
+ ( a^{\mathsf L} \, W) \, (P_k^{\mathsf R})^{\prime} ,
 \end{align*} 
we arrive to
\begin{multline*}
\langle \pmb \ell^{\mathsf L} ( P_n^{\mathsf L} ) \, , \, P_k^{\mathsf R} \rangle_ W 
 =
\int_{\gamma} P_n^{\mathsf L} 
\big( \big( z W \big)^{\prime\prime} 
- (a^{\mathsf L} \, W)^{\prime}
+ b^{\mathsf L} W 
\big) P_k^{\mathsf R} \d z
 \\ +
 \int_{\gamma} P_n^{\mathsf L} 
\big(2 \big( z W \big)^{\prime} 
- a^{\mathsf L} W
\big) ( P_k^{\mathsf R} )^{\prime} \d z
+ \int_{\gamma} P_n^{\mathsf L}z W 
( P_k^{\mathsf R} )^{\prime\prime} \d z ,
\end{multline*}
and so
\begin{align*}
\langle \pmb \ell^{\mathsf L} ( P_n^{\mathsf L} ) \, , \, P_k^{\mathsf R} \rangle_ W 
 =
\int_{\gamma} P_n^{\mathsf L} W 
\big( z (P_k^{\mathsf R} )^{\prime\prime}
+ \, a^{\mathsf R} (P_k^{\mathsf R})^{\prime}
+ b^{\mathsf R} \big) P_k^{\mathsf R}
 \d z, && 
n,k \in \in \N 
,
\end{align*}
or, equivalently,
\begin{align*}
\langle \pmb \ell^{\mathsf L} ( P_n^{\mathsf L} ) \, , \, P_k^{\mathsf R} \rangle_ W 
 =
\langle P_n^{\mathsf L} \, , \, \pmb \ell^{\mathsf R} ( P_k^{\mathsf R} ) \rangle_ W 
 , 
\end{align*}
which completes the proof.
\end{proof}

\begin{defi}
 Let $\alpha^\mathsf L$ and $\alpha^\mathsf R$ be two $N \times N$ matrices and define the fol\-low\-ing linear operators acting on the space of matrix polynomials $ \mathbb \C^{N\times N}[z]$ as follows
 \begin{align*} 
 {\mathcal L}^{\mathsf L} (P) &:= z P^{\prime\prime} + P^{\prime} a^{\mathsf L} + P {\alpha}^{\mathsf L}, &
 {\mathcal L}^{\mathsf R} (P) &:=z P^{\prime\prime} + a^{\mathsf R} P^{\prime} + {\alpha}^{\mathsf R} P.
 \end{align*}
\end{defi}
Observe that
\begin{align*} 
{\mathcal L}^{\mathsf L} (P) &= \pmb \ell^{\mathsf L} (P)- P \,b^{\mathsf L} + P {\alpha}^{\mathsf L},& 
{\mathcal L}^{\mathsf R} (P) &=\pmb \ell^{\mathsf R} (P )- b^{\mathsf R} P + {\alpha}^{\mathsf R} P.
\end{align*}
We have the fol\-low\-ing characterization.

\begin{teo} \label{teo:novo}
The fol\-low\-ing conditions are equivalent:

\hangindent=.9cm \hangafter=1
{\noindent}$\phantom{ol}${\rm i)}
${\mathcal L}^{\mathsf R}=\big({\mathcal L}^{\mathsf L} \big)^*$ with respect to the matrix of weights 
$ W $.

\hangindent=.9cm \hangafter=1
{\noindent}$\phantom{ol}${\rm ii)}
The matrix of weights $ W$ satisfies the matrix Pearson equation~\eqref{eq:Pearson} with the boundary conditions~\eqref{eq:bconditions} as well as fulfills the constraint
\begin{align}
\label{eq:escondida}
\big( {\alpha}^{\mathsf L} - \,b^{\mathsf L} \big) W = W \big( {\alpha}^{\mathsf R} - b^{\mathsf R} \big). 
\end{align}

\hangindent=.9cm \hangafter=1
{\noindent}$\phantom{ol}${\rm iii)}
The matrix of weights $ W $ satisfies the boundary conditions~\eqref{eq:bconditions} as well as 
\begin{align}
\label{eq:rhoalphal}
( z W)^{\prime \prime} -\big(a^{\mathsf L} W\big)^{ \prime} 
+ \alpha^{\mathsf L} W = W \alpha^{\mathsf R} , \\
\label{eq:rhoalphar}
( z W)^{\prime \prime} - \big( W a^{\mathsf R} \big)^{ \prime} 
+ W \alpha^{\mathsf R} = \alpha^{\mathsf L} W . 
\end{align}
\end{teo}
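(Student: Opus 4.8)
The plan is the following. By the definition of the adjoint relative to the sesquilinear form~\eqref{eq:sesquilinear}, condition~(i) is equivalent to the bilinear identity $\langle\mathcal L^{\mathsf L}(P),\tilde P\rangle_W=\langle P,\mathcal L^{\mathsf R}(\tilde P)\rangle_W$ for all $P,\tilde P\in\C^{N\times N}[z]$, where uniqueness of the adjoint is guaranteed by the regularity of $W$. I would then close the loop (ii)$\Rightarrow$(i), (ii)$\Leftrightarrow$(iii) and (i)$\Rightarrow$(ii), the last one being the substantive step. For (ii)$\Rightarrow$(i), use the decompositions $\mathcal L^{\mathsf L}(P)=\pmb\ell^{\mathsf L}(P)+P(\alpha^{\mathsf L}-b^{\mathsf L})$ and $\mathcal L^{\mathsf R}(P)=\pmb\ell^{\mathsf R}(P)+(\alpha^{\mathsf R}-b^{\mathsf R})P$ recorded just before the statement: Proposition~\ref{prop:aa} gives $\langle\pmb\ell^{\mathsf L}(P),\tilde P\rangle_W=\langle P,\pmb\ell^{\mathsf R}(\tilde P)\rangle_W$, and inserting~\eqref{eq:escondida} pointwise into the integral~\eqref{eq:sesquilinear} gives $\langle P(\alpha^{\mathsf L}-b^{\mathsf L}),\tilde P\rangle_W=\langle P,(\alpha^{\mathsf R}-b^{\mathsf R})\tilde P\rangle_W$, so adding the two yields the bilinear identity. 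The equivalence (ii)$\Leftrightarrow$(iii) is purely algebraic: the implication that~\eqref{eq:Pearson} alone forces~\eqref{eq:rhoml} and~\eqref{eq:rhomr} (the first part of the proof of Proposition~\ref{prop:eqrho}, which uses no boundary conditions) together with~\eqref{eq:escondida} turns~\eqref{eq:rhoml} and~\eqref{eq:rhomr} into~\eqref{eq:rhoalphal} and~\eqref{eq:rhoalphar}, which with~\eqref{eq:bconditions} gives (ii)$\Rightarrow$(iii); conversely, adding~\eqref{eq:rhoalphal} and~\eqref{eq:rhoalphar} reproduces exactly the relation obtained by adding~\eqref{eq:rhoml} and~\eqref{eq:rhomr}, so the reciprocal step in the proof of Proposition~\ref{prop:eqrho} (that relation together with~\eqref{eq:bconditions}) recovers~\eqref{eq:Pearson}, after which~\eqref{eq:rhoml} holds and subtracting it from~\eqref{eq:rhoalphal} gives~\eqref{eq:escondida}, proving (iii)$\Rightarrow$(ii).

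For (i)$\Rightarrow$(ii) I would perform the same double integration by parts as in the proof of Proposition~\ref{prop:aa}, but without imposing any relation on $W$, to arrive at
\[
\langle\mathcal L^{\mathsf L}(P),\tilde P\rangle_W-\langle P,\mathcal L^{\mathsf R}(\tilde P)\rangle_W=\frac{1}{2\pi\ii}\Big(\big[zP'W\tilde P-P\big((zW)'-a^{\mathsf L}W\big)\tilde P-PzW\tilde P'\big]_0^\infty+\int_\gamma P\big(\mathsf E_0\,\tilde P+\mathsf E_1\,\tilde P'\big)\d z\Big),
\]
with $\mathsf E_0:=(zW)''-(a^{\mathsf L}W)'+\alpha^{\mathsf L}W-W\alpha^{\mathsf R}$ and $\mathsf E_1:=2(zW)'-a^{\mathsf L}W-Wa^{\mathsf R}$. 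Since the left-hand side vanishes for every $P,\tilde P$, I would vary $P,\tilde P$ over monomials with matrix-unit coefficients of increasing degree to force first $\mathsf E_0\equiv0_N$ and $\mathsf E_1\equiv0_N$ on $\gamma$ and then the vanishing of the boundary bracket for all $P,\tilde P$. Using $a^{\mathsf L}=2h^{\mathsf L}+I_N$ and $a^{\mathsf R}=2h^{\mathsf R}+I_N$, the identity $\mathsf E_1=0_N$ is precisely the Pearson equation~\eqref{eq:Pearson} and $\mathsf E_0=0_N$ is~\eqref{eq:rhoalphal}; the vanishing of the boundary bracket gives $zW\big|_0^\infty=0_N$ and $\big((zW)'-a^{\mathsf L}W\big)\big|_0^\infty=0_N$, and since~\eqref{eq:Pearson} forces $(zW)'-Wa^{\mathsf R}=-\big((zW)'-a^{\mathsf L}W\big)$ the third relation of~\eqref{eq:bconditions} follows as well. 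Finally, with~\eqref{eq:Pearson} now available, Proposition~\ref{prop:eqrho} gives~\eqref{eq:rhoml}, and subtracting it from~\eqref{eq:rhoalphal} produces~\eqref{eq:escondida}; hence (i)$\Rightarrow$(ii) and the loop closes.

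I expect the extraction argument in the step (i)$\Rightarrow$(ii) to be the main obstacle: isolating cleanly the boundary bracket from the bulk integral and establishing that $\frac{1}{2\pi\ii}\int_\gamma P\,G\,\tilde P\,\d z=0_N$ for all matrix polynomials $P,\tilde P$ forces $G\equiv0_N$ on $\gamma$. This is where the Laguerre-type hypotheses on $W$ are essential: the fast decay of $W$ and of $W',W''$ at the endpoint $\infty$ makes the limits in~\eqref{eq:bconditions} exist and the boundary/bulk splitting legitimate, while the integrable behaviour of $W,W',W''$ near the origin established in~\S\ref{sec:2} ensures that the integrands are admissible and that the regularity of $W$ can be invoked to pass from vanishing of the moments of $G$ to $G\equiv0_N$. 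Once this analytic point is secured, the remaining manipulations are routine.
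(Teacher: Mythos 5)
Your handling of (ii)$\Rightarrow$(i) and of (ii)$\Leftrightarrow$(iii) coincides with the paper's proof: the decomposition ${\mathcal L}^{\mathsf L}(P)=\pmb\ell^{\mathsf L}(P)-Pb^{\mathsf L}+P\alpha^{\mathsf L}$ together with Proposition~\ref{prop:aa} for the first arrow, and the addition/subtraction of \eqref{eq:rhoalphal}--\eqref{eq:rhoalphar} combined with Proposition~\ref{prop:eqrho} for the second, are exactly the ingredients used there. The divergence is in (i)$\Rightarrow$(ii). The paper never attempts to derive the Pearson equation and the boundary conditions from adjointness alone: it invokes Proposition~\ref{prop:aa} in both directions, so that \eqref{eq:Pearson} and \eqref{eq:bconditions} act as standing hypotheses and ``i)\,$\Leftrightarrow$\,ii)'' reduces to the equivalence of the residual identity $\langle -Pb^{\mathsf L}+P\alpha^{\mathsf L},\tilde P\rangle_W=\langle P,-b^{\mathsf R}\tilde P+\alpha^{\mathsf R}\tilde P\rangle_W$ with the pointwise constraint \eqref{eq:escondida}. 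You instead read i) literally and try to extract \eqref{eq:Pearson}, \eqref{eq:bconditions} and \eqref{eq:rhoalphal} from the vanishing of a boundary-plus-bulk expression.

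Your double integration by parts is correct ($\mathsf E_1=0_N$ is indeed \eqref{eq:Pearson} and $\mathsf E_0=0_N$ is \eqref{eq:rhoalphal}), but the extraction step is a genuine gap, for two reasons. First, for each pair of monomials you only know that the sum of the boundary bracket and the bulk integral vanishes; nothing in the proposal forces the two contributions to vanish separately, and the boundary values enter through degree-dependent combinations that cannot be disentangled from the moments of $\mathsf E_0,\mathsf E_1$ by merely varying degrees. Second, even granting the separation, the bulk condition only yields the vanishing of all matrix moments $\int_\gamma z^{j}\mathsf E_i\,\d z$; on a curve with an endpoint at infinity the moment problem is in general indeterminate (Stieltjes' classical example on $[0,+\infty)$), so vanishing of all moments does not imply $\mathsf E_0\equiv\mathsf E_1\equiv 0_N$ on $\gamma$, and the regularity of $W$ (non-degeneracy of the block Hankel determinants) is not the tool that rescues this. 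To be fair, the same ``all moments vanish, hence the density vanishes'' issue is already latent in the paper's own passage from the residual sesquilinear identity to \eqref{eq:escondida}, where it is silently glossed over; but your version leans on it much more heavily. The safe repair is to follow the paper: treat \eqref{eq:Pearson} and \eqref{eq:bconditions} as part of the framework in which the adjointness statements are compared (as the appeal to Proposition~\ref{prop:aa} requires), after which your (ii)$\Rightarrow$(i) argument run backwards, plus your (ii)$\Leftrightarrow$(iii) argument, closes the loop without the problematic extraction.
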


\begin{proof}
{}From Proposition~\ref{prop:aa}
\begin{align*}
\langle {\mathcal L}^{\mathsf L} ( P ) , \tilde P \rangle_ W = \langle P , {\mathcal L}^{\mathsf R} (\tilde P ) \rangle_ W ,
\end{align*}
if and only if
\begin{align*}
\langle- P \,b^{\mathsf L} + P {\alpha}^{\mathsf L} , \tilde P \rangle_ W = \langle P , - b^{\mathsf R} \tilde P + {\alpha}^{\mathsf R} \tilde P \rangle_ W ,
\end{align*}
that is~\eqref{eq:escondida} takes place, and so i) is equivalent to ii).

To prove that i) is equivalent to iii) observe that, adding~\eqref{eq:rhoalphal} and~\eqref{eq:rhoalphar}, the fol\-low\-ing~holds
\begin{align*}
z { W}^{\prime \prime} = \big( a^{\mathsf L} W \big)^{ \prime} +
\big( W a^{\mathsf R} \big)^{ \prime} , 
\end{align*} 
which transforms~\eqref{eq:Pearson} if we integrate requesting boundary conditions~\eqref{eq:bconditions}.
Moreover, if we subtract~\eqref{eq:rhoalphal} and~\eqref{eq:rhoalphar} we arrive directly to~\eqref{eq:escondida}.
\end{proof}

\subsection{Eigenvalue problems}




Now we discuss how our findings based on the Riemann--Hilbert problem are linked with previous  results by   Durán and Grünbaum~\cite{duran_3,duran2004,duran_1,duran_2}.
The next theorem shows when the polynomials and associated functions of second kind are eigenfunctions of a second order operator.

\begin{teo}[Eigenvalue problems for Laguerre matrix orthogonal polynomials] \label{teo:nuevo}
Let $h^\mathsf L$ and $h^\mathsf R$ be degree one matrix polynomials,~{i.e.}
\begin{align*}
h^{\mathsf L}(z) = A^{\mathsf L} z + B^{\mathsf L} , &&
h^{\mathsf R}(z) = A^{\mathsf R} z +B^{\mathsf R} , &&
A^{\mathsf L}, A^{\mathsf R}, B^{\mathsf L}, B^{\mathsf R} \in \mathbb C^{N \times N} ,
\end{align*} 
with $A^{\mathsf L}, A^{\mathsf R}$ definite negative, and $ W$ a matrix of weights a solution of~\eqref{eq:rhoalphal},~\eqref{eq:rhoalphar} subject to the boundary conditions~\eqref{eq:bconditions}.
Then, the fol\-low\-ing conditions are equivalent:

\hangindent=.9cm \hangafter=1
{\noindent}$\phantom{ol}${\rm i)} \label{1} 
The operators ${\mathcal L}^{\mathsf L}$ and ${\mathcal L}^{\mathsf R} $ are adjoint operators with respect to the matrix of weights $ W$,
i.e. ${\mathcal L}^{\mathsf R}=\big({\mathcal L}^{\mathsf L}\big)^*$.

\hangindent=.9cm \hangafter=1
{\noindent}$\phantom{ol}${\rm ii)} \label{2} 
The biorthogonal polynomial sequences with respect to $ W$, say
$\big\{ P_n^{\mathsf L} \big\}_{n\in\N}$, $ \big\{ P_n^{\mathsf R}\big\}_{n\in\N}$, 
are eigenfunctions of ${\mathcal L}^{\mathsf L}$ and ${\mathcal L}^{\mathsf R}$,~i.e. there exist $N \times N $ matrices, $\lambda_n^{\mathsf L}$, $\lambda_n^{\mathsf R}$ such that
\begin{align*} 
{\mathcal L}^{\mathsf L} (P_n^{\mathsf L})
 &= \lambda_n^{\mathsf L} P_n^{\mathsf L}, &
 {\mathcal L}^{\mathsf R} (P_n^{\mathsf R}) &= P_n^{\mathsf R} \lambda_n^{\mathsf R},
\end{align*}
with $\lambda_n^{\mathsf L} C_n^{-1} = C_n^{-1} \lambda_n^{\mathsf R}$, $n \in \mathbb N$.

\hangindent=.9cm \hangafter=1
{\noindent}$\phantom{ol}${\rm iii)} \label{3} 
The functions of second kind, $ \big\{ Q_n^\mathsf L \big\}_{n \in \mathbb N}$ and $ \big\{ Q_n^\mathsf R\big\}_{n \in \mathbb N}$, associated with the biorthogonal polynomials, $\big\{ P_n^\mathsf L \big\}_{n \in \mathbb N}$ 
and $\big\{ P_n^\mathsf R \big\}_{n \in \mathbb N}$,
fulfill the second order differential~equations,
\begin{align*} 
z\big(Q_n^{\mathsf L} \big)^{\prime \prime}(z) 
+\big(Q_n^{\mathsf L} \big)^{ \prime}(z) \, \big( -2 h^{\mathsf R} (z) + I_N \big)
+ Q_n^{\mathsf L} (z) \, (\alpha^{\mathsf R} - 2 A^{\mathsf R} ) 
 &= \lambda_n^{\mathsf L} \, Q_n^{\mathsf L} (z),
 \\
z\big(Q_n^{\mathsf R} \big)^{\prime \prime}(z) 
+\big( - 2 h^{\mathsf L} (z) + I_N \big) \big(Q_n^{\mathsf R} \big)^{ \prime}(z) 
 +
(\alpha^{\mathsf L} - 2 A^{\mathsf L} ) \, Q_n^{\mathsf R} (z) 
 &= 
Q_n^{\mathsf R} (z) \, \lambda_n^{\mathsf R}.
\end{align*}
\end{teo}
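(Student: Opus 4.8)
The plan is to establish the two equivalences i) $\Leftrightarrow$ ii) and ii) $\Leftrightarrow$ iii), the decisive analytic input for the latter being a Cauchy--transform identity that turns the second order equation for the orthogonal polynomials into the one for the functions of the second kind. (By Theorem~\ref{teo:novo} the standing hypothesis that $W$ solves~\eqref{eq:rhoalphal}, \eqref{eq:rhoalphar} subject to~\eqref{eq:bconditions} already entails i), but I argue each implication separately so the equivalences are self-contained.) Two bookkeeping facts will be used repeatedly: from the observation following the definition of $\mathcal L^{\mathsf L},\mathcal L^{\mathsf R}$ one has $\mathcal L^{\mathsf L}(P)=\pmb \ell^{\mathsf L}(P)+P(\alpha^{\mathsf L}-b^{\mathsf L})$ and $\mathcal L^{\mathsf R}(P)=\pmb \ell^{\mathsf R}(P)+(\alpha^{\mathsf R}-b^{\mathsf R})P$; and, since $h^{\mathsf L},h^{\mathsf R}$ have degree one, $\mathcal L^{\mathsf L}$ and $\mathcal L^{\mathsf R}$ carry $\C^{N\times N}[z]$ into itself without raising degrees, the coefficient of $z^n$ in $\mathcal L^{\mathsf L}(P_n^{\mathsf L})$ being $2nA^{\mathsf L}+\alpha^{\mathsf L}$ and that in $\mathcal L^{\mathsf R}(P_n^{\mathsf R})$ being $2nA^{\mathsf R}+\alpha^{\mathsf R}$.

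For i) $\Rightarrow$ ii) I expand $\mathcal L^{\mathsf L}(P_n^{\mathsf L})=\sum_{k=0}^{n}\Lambda_{n,k}P_k^{\mathsf L}$ in the monic left basis and pair against $P_m^{\mathsf R}$: biorthogonality gives $\langle\mathcal L^{\mathsf L}(P_n^{\mathsf L}),P_m^{\mathsf R}\rangle_W=\Lambda_{n,m}C_m^{-1}$ for $m\le n$, whereas $\langle P_n^{\mathsf L},\mathcal L^{\mathsf R}(P_m^{\mathsf R})\rangle_W=0$ for $m<n$ because $\mathcal L^{\mathsf R}(P_m^{\mathsf R})$ has degree $m<n$; adjointness thus forces $\Lambda_{n,m}=0$ for $m<n$, that is $\mathcal L^{\mathsf L}(P_n^{\mathsf L})=\lambda_n^{\mathsf L}P_n^{\mathsf L}$ with $\lambda_n^{\mathsf L}=\Lambda_{n,n}=2nA^{\mathsf L}+\alpha^{\mathsf L}$. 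The symmetric computation yields $\mathcal L^{\mathsf R}(P_n^{\mathsf R})=P_n^{\mathsf R}\lambda_n^{\mathsf R}$ with $\lambda_n^{\mathsf R}=2nA^{\mathsf R}+\alpha^{\mathsf R}$, and equating $\langle\mathcal L^{\mathsf L}(P_n^{\mathsf L}),P_n^{\mathsf R}\rangle_W$ with $\langle P_n^{\mathsf L},\mathcal L^{\mathsf R}(P_n^{\mathsf R})\rangle_W$ gives $\lambda_n^{\mathsf L}C_n^{-1}=C_n^{-1}\lambda_n^{\mathsf R}$. For ii) $\Rightarrow$ i) one checks, on the biorthogonal basis, that both $\langle\mathcal L^{\mathsf L}(P_n^{\mathsf L}),P_m^{\mathsf R}\rangle_W$ and $\langle P_n^{\mathsf L},\mathcal L^{\mathsf R}(P_m^{\mathsf R})\rangle_W$ equal $\delta_{n,m}\lambda_n^{\mathsf L}C_n^{-1}=\delta_{n,m}C_n^{-1}\lambda_n^{\mathsf R}$, and extends by bilinearity.

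The core is ii) $\Leftrightarrow$ iii). I would first prove, for every $n$ and using only that $W$ solves~\eqref{eq:rhoalphal}, \eqref{eq:rhoalphar} with~\eqref{eq:bconditions}, the identity
\begin{multline*}
z\big(Q_n^{\mathsf L}\big)^{\prime\prime}(z)+\big(Q_n^{\mathsf L}\big)^{\prime}(z)\big(-2h^{\mathsf R}(z)+I_N\big)+Q_n^{\mathsf L}(z)\big(\alpha^{\mathsf R}-2A^{\mathsf R}\big)\\
=\frac{1}{2\pi\ii}\int_{\gamma}\frac{\mathcal L^{\mathsf L}(P_n^{\mathsf L})(t)\,W(t)}{t-z}\,\d t ,
\end{multline*}
and, symmetrically, that the left-hand side of the asserted $Q_n^{\mathsf R}$--equation evaluated on $Q_n^{\mathsf R}$ equals $\frac{1}{2\pi\ii}\int_{\gamma}(t-z)^{-1}W(t)\,\mathcal L^{\mathsf R}(P_n^{\mathsf R})(t)\,\d t$. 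To obtain this, differentiate twice under the integral in $Q_n^{\mathsf L}(z)=\frac{1}{2\pi\ii}\int_{\gamma}(t-z)^{-1}P_n^{\mathsf L}(t)W(t)\,\d t$, use $h^{\mathsf R}(z)=h^{\mathsf R}(t)-A^{\mathsf R}(t-z)$ and $z=t-(t-z)$ --- precisely here the degree-one hypothesis is needed --- to replace every $\partial_z$ acting on $(t-z)^{-1}$ by $\partial_t$ modulo lower order terms, recognise the integrand as $P_n^{\mathsf L}(t)W(t)$ times $\mathcal L^{\mathsf R}$ applied in the variable $t$ to $(t-z)^{-1}I_N$, and run the integration by parts of Proposition~\ref{prop:aa} and Theorem~\ref{teo:novo} with the test polynomial replaced by that kernel, the endpoint contributions at $0$ and $\infty$ being annihilated by~\eqref{eq:bconditions}. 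Granting the identity: ii) makes the right-hand side equal to $\lambda_n^{\mathsf L}\cdot\frac{1}{2\pi\ii}\int_{\gamma}(t-z)^{-1}P_n^{\mathsf L}W\,\d t=\lambda_n^{\mathsf L}Q_n^{\mathsf L}(z)$, which is iii) (and the mirror identity gives the $Q_n^{\mathsf R}$ equation); conversely, iii) turns the identity into $\frac{1}{2\pi\ii}\int_{\gamma}(t-z)^{-1}\big(\mathcal L^{\mathsf L}(P_n^{\mathsf L})-\lambda_n^{\mathsf L}P_n^{\mathsf L}\big)(t)W(t)\,\d t\equiv0$ on $\C\setminus\gamma$, whence by Sokhotski--Plemelj the jump $\big(\mathcal L^{\mathsf L}(P_n^{\mathsf L})-\lambda_n^{\mathsf L}P_n^{\mathsf L}\big)W$ vanishes on $\gamma$ and, $W$ being nonsingular almost everywhere there, $\mathcal L^{\mathsf L}(P_n^{\mathsf L})=\lambda_n^{\mathsf L}P_n^{\mathsf L}$; the mirror argument gives $\mathcal L^{\mathsf R}(P_n^{\mathsf R})=P_n^{\mathsf R}\lambda_n^{\mathsf R}$, so ii) holds.

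\textbf{Main obstacle.} The only step that is not purely formal is the extension of the adjointness integration by parts (Proposition~\ref{prop:aa}, Theorem~\ref{teo:novo}) from polynomial test arguments to the Cauchy kernel $(t-z)^{-1}$: one must justify differentiating under the integral uniformly for $z$ in compact subsets of $\C\setminus\gamma$, and must check that the boundary terms at the two ends of $\gamma$ vanish --- at $\infty$ this relies on the decay of $W$ forced by the negative definiteness of $A^{\mathsf L},A^{\mathsf R}$, and at the finite endpoint $0$ on~\eqref{eq:bconditions} together with the Laguerre-type Cauchy-transform estimates already used in the proof of Theorem~\ref{teo:LRHP}. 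If one prefers to avoid the singular kernel, the same identity follows by expanding $(t-z)^{-1}$ in powers of $1/z$ near infinity, applying the polynomial adjointness relation term by term, and matching asymptotic series in $1/z$. The remaining ingredients --- comparing $\mathcal L^{\mathsf L},\mathcal L^{\mathsf R}$ with $\pmb \ell^{\mathsf L},\pmb \ell^{\mathsf R}$, the leading-coefficient bookkeeping, and Sokhotski--Plemelj --- are routine.
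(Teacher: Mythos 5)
Your proposal is correct, and it follows essentially the route the paper intends: the paper's own ``proof'' is only a pointer to \cite[Theorem~5]{BFM}, whose argument is precisely your combination of leading-coefficient bookkeeping plus biorthogonality-induced triangularity for i)$\Leftrightarrow$ii), and the Cauchy-transform/integration-by-parts identity together with Sokhotski--Plemelj for ii)$\Leftrightarrow$iii). Your write-up in fact supplies the details the paper omits, and the one delicate point you flag (that the boundary contributions must vanish at each endpoint separately, using the exponential decay from the negative definiteness of $A^{\mathsf L},A^{\mathsf R}$ at $\infty$ and the Laguerre-type estimates at $0$) is handled at the same level of rigor as Proposition~\ref{prop:aa} in the paper itself.
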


\begin{proof}
The proof follows from similar arguments as in~\cite[Theorem~5]{BFM}.
\end{proof}


The interpretation in terms of adjoint operators, inherits from the Riemann--Hilbert problem the characterization for the $\big\{Q_n^\mathsf L \big\}_{n \in \N}$ and 
$\big\{Q_n^\mathsf R \big\}_{n \in \N}$ that resembles the ones in~\eqref{eq:secondorderql} and~\eqref{eq:secondorderqr}. Moreover, Theorems~\ref{teo:novo} we see that $W$ in Theorem~\ref{teo:nuevo} can be taken as a solution of a Pearson Sylvester differential equation like~\eqref{eq:Pearson} and satisfying~\eqref{eq:escondida}.

\subsection{
Reductions}

We consider two possible reductions for the matrix of weights, the symmetric reduction and the Hermitian reduction.
\begin{enumerate}
\item A matrix of weights $W(z)$ with support on $\gamma$ is said to be symmetric if
\begin{align*}
(W(z))^\top&=W(z),& z&\in\gamma.
\end{align*}
\item A matrix of weights $W(x)$ with support on $\R$ is said to be Hermitian if
\begin{align*}
(W(x))^\dagger&=W(x),& x&\in\R.
\end{align*}
\end{enumerate}
These two reductions lead to orthogonal polynomials, as the two biorthogonal families are identified,~i.e., for the symmetric case
$P_n^\mathsf R (z)=\big(P_n^\mathsf L (z)\big)^\top$,
$Q_n^\mathsf R (z)=\big(Q_n^\mathsf L (z)\big)^\top$,
and for the Hermitian case, with $\gamma=\R$,
$P_n^\mathsf R (z)=\big(P_n^\mathsf L (\bar z)\big)^\dagger$, 
$Q_n^\mathsf R (z)=\big(Q_n^\mathsf L (\bar z)\big)^\dagger$.
In both cases biorthogonality collapses into orthogonality.

For the symmetric or Hermitian reductions we find that
\begin{align*}
\pmb \ell^{\mathsf R} (P) 
& = \big(\pmb \ell^{\mathsf L}(P^\top)\big)^\top, && \text{symmetric}, 
\\
\pmb \ell^{\mathsf R} (P) 
& = \big(\pmb \ell^{\mathsf L}(P^\dagger)\big)^\dagger, && \text{Hermitian}, 
\end{align*}
where in the last case we take $x\in\R$. Relation~\eqref{eq:adjoint_ell} reads in this case as follows
\begin{align*}
\pmb\ell^* (P) & =(\pmb\ell(P^\top))^\top, &&
\text {symmetric}, \\
\pmb\ell^* (P) & =(\pmb\ell(P^\dagger))^\dagger, && 
\text {Hermitian} ,
\end{align*}
for $P$ any matrix polynomial and $\pmb\ell:=\pmb \ell^{\mathsf L}$.

We find that
\begin{align*}
\mathcal L^{\mathsf R} (P) & = \big( \mathcal L^{\mathsf L}(P^\top)\big)^\top, && \text{symmetric}, \\
\mathcal L^{\mathsf R} (P) & = \big( \mathcal L^{\mathsf L}(P^\dagger)\big)^\dagger, && \text{Hermitian},
\end{align*}
where in the last case we take $x\in\R$.
\\ 
Moreover, the fol\-low\-ing are equivalent conditions

\begin{enumerate}
\item 
Equations 
\begin{align}\label{eq:adjoint_reduced}
\mathcal L^* (P) 
=( \mathcal L(P^\top))^\top, && 
\text {symmetric}, \\
\mathcal L^* (P) =( \mathcal L(P^\dagger))^\dagger, && 
\text {Hermitian} ,
\end{align}
are satisfied by any matrix polynomial $P$, where $ \mathcal L:= \mathcal L^{\mathsf L}$.

\item The matrix of weights $ W $ satisfies the matrix Pearson equation
\begin{align}
z W^{\prime}(z) & = h(z) W(z)+ W(z) (h(z))^\top, && \text{symmetric}, \\
z W^{\prime}(z) & = h(z) W(z)+ W(z) (h(\bar z))^\dagger, && \text{Hermitian}.
\end{align}
with the boundary conditions
\begin{align}
\label{eq:boundaryc_reduced}
 zW \big|_0^\infty = 0_N, 
&& \big( \big(z W \big)^{\prime} - (2 h+ I_N ) W \big) \big|_0^\infty = 0_N 
\end{align}
as well as fulfills the constraint
\begin{align*}
\big( {\alpha}- \, \mathcal N (h(z) ) W (z) & = W (z) \big( {\alpha}^\top - \mathcal N (h(z) )^\top \big), && \text{symmetric}, \\
\big( {\alpha}- \, \mathcal N (h(z) ) \big) W (z) & = W (z) \big( {\alpha}^\dagger - \mathcal N (h(z) )^\dagger \big), && \text{Hermitian},
\end{align*}

\item 
The matrix of weights $ W $ satisfies the boundary conditions~\eqref{eq:boundaryc_reduced}
as well as 
\begin{align}\label{eq:second_order_reduced}
(z W (z) )^{\prime \prime} - \big( (2 h (z) + I_N ) W (z) \big)^{ \prime} 
+ \alpha W (z) & = W (z) \alpha^\top , && \text{symmetric},\\
(z W (z) )^{\prime \prime} - \big( (2 h (z) + I_N )W (z) \big)^{ \prime} 
+ \alpha W (z) & = W (z) \alpha^\dagger, && \text{Hermitian}.
\end{align}
\end{enumerate}

For the symmetric or Hermitian reductions we take
$ h(z) = A z + B $,
 with $A$ definite negative, and $ W$ a matrix of weights a solution of~\eqref{eq:second_order_reduced} subject to the boundary conditions~\eqref{eq:boundaryc_reduced}.
 Then, the fol\-low\-ing conditions are equivalent:
 
 \hangindent=.9cm \hangafter=1
 {\noindent}$\phantom{ol}${\rm i)} \label{1} 
Equation~\eqref{eq:adjoint_reduced} is satisfied.
 
 \hangindent=.9cm \hangafter=1
 {\noindent}$\phantom{ol}${\rm ii)} \label{2} 
 The matrix orthogonal polynomials with respect to $ W$
 are eigenfunctions of ${\mathcal L}$.
 
\hangindent=.9cm \hangafter=1
{\noindent}$\phantom{ol}${\rm iii)} \label{3} 
The functions of second kind, $ \big\{ Q_n(z) \big\}_{n \in \mathbb N}$, associated with the matrix orthogonal polynomials, $\big\{ P_n (z) \big\}_{n \in \mathbb N}$ fulfill the second order differential~equations,
 \begin{align*}
 z\big(Q_n\big)^{\prime \prime}(z) 
 + \big(Q_n\big)^{ \prime}(z) \, ( -2 h (z) + I_N )^\top
 + Q_n (z) \, (\alpha^\top - 2 A^\top ) 
 = \lambda_n \, Q_n (z), && \text{symmetric}, 
 \\
 z \big(Q_n\big)^{\prime \prime}(z) 
 + \big(Q_n\big)^{ \prime}(z) \, ( -2 h (z) + I_N )^\dagger
 + Q_n (z) \, (\alpha^\dagger - 2 A^\dagger ) 
 = \lambda_n \, Q_n (z), && \text{Hermitian}.
 \end{align*}

These equivalences, excluding the one for the second kind functions (which is new), coincide with those of~\cite{duran2004}.
Therefore, these results could be understood as an extension of those obtained by Durán and Grünbaum to the non Hermitian orthogonality scenario.

\section{Matrix discrete Painlev\'e~IV }

We can consider, using the notation introduced before, the matrix weight measure $W = W_{\mathsf L} W_{\mathsf R} $
such that
\begin{align*}
z (W^\mathsf L)^{\prime}(z) =( A_{\mathsf L} +
B_{\mathsf L} z + C_{\mathsf L} z^2) W^{\mathsf L}(z), &&
z(W^\mathsf R)^{\prime}(z)
= W^{\mathsf R}(z) (A_{\mathsf R} +
B_{\mathsf R} z + C_{\mathsf R} )z^2 . 
\end{align*}
From Theorem~\ref{prop:mn} we get that the matrix
\begin{align*} 
\tilde{M}_n=z M_n^{\mathsf L} 
\end{align*}
is given explicitly by
\begin{align*}
(\tilde{M}_n^{\mathsf L})_{11} & = C_{n}^{-1} C_{\mathsf R} C_{n-1} +
(A_{\mathsf L} + B_{\mathsf L} z + C_{\mathsf L} z^2) + B_{\mathsf L}
q_{\mathsf R,n-1}^{1} + p_{\mathsf L,n}^{1} B_{\mathsf L} \\
& \phantom{olaolaolaola} + z
(C_{\mathsf L}q_{\mathsf R,n-1}^{1} + p_{\mathsf L,n}^{1} C_{\mathsf L})
+ C_{\mathsf L} q_{\mathsf R,n-1}^{2}
+ p_{\mathsf L,n}^{2} C_{\mathsf L}
+ p_{\mathsf L,n}^{1} C_{\mathsf L} q_{\mathsf R,n-1}^{1} , \\
(\tilde{M}_n^{\mathsf L})_{12} & = (B_{\mathsf L} + C_{\mathsf L} z +
C_{\mathsf L} q_{\mathsf R,n}^{1} + p_{\mathsf L,n}^{1} C_{\mathsf L})
C_n^{-1} + C_n^{-1} (B_{\mathsf R} + C_{\mathsf R} z + C_{\mathsf R}
p_{\mathsf R,n}^{1} + q_{\mathsf L,n}^{1} C_{\mathsf R}) ,
\\
(\tilde{M}_n^{\mathsf L})_{21} & = -C_{n-1} (B_{\mathsf L} + C_{\mathsf L} z +
C_{\mathsf L} q_{\mathsf R,n-1}^{1} + p_{\mathsf L,n-1}^{1} C_{\mathsf
L}) - (B_{\mathsf R} + C_{\mathsf R} z + C_{\mathsf R} p_{\mathsf
R,n-1}^{1} + q_{\mathsf L,n-1}^{1} C_{\mathsf R}) C_{n-1} ,
\\
(\tilde{M}_n^{\mathsf L})_{22} & = -C_{n-1} C_{\mathsf L} C_n^{-1} -
(A_{\mathsf R} + B_{\mathsf R} z + C_{\mathsf R} z^2) - B_{\mathsf R}
p_{\mathsf R,n}^{1} - q_{\mathsf L,n-1}^{1} B_{\mathsf R} \\
& \phantom{olaolaolaola} - z
(C_{\mathsf R}p_{\mathsf R,n}^{1}
+ q_{\mathsf L,n-1}^{1} C_{\mathsf R})
- C_{\mathsf R} p_{\mathsf R,n}^{2}
- q_{\mathsf L,n-1}^{2} C_{\mathsf R}
- q_{\mathsf L,n-1}^{1} C_{\mathsf R} p_{\mathsf R,n}^{1} .
\end{align*}
From the three term recurrence relation for $\{ P_n^{\mathsf L} \}_{n \in \mathbb N}$ we get that 
$p_{\mathsf L,n}^1 - p_{\mathsf L,n+1}^1 = \beta_n^{\mathsf L}$ and $p_{\mathsf L,n}^2 - p_{\mathsf L,n+1}^2 = \beta_n^{\mathsf L} p_{\mathsf L,n}^1 + \gamma_n^{\mathsf L}$
where $\gamma_n^{\mathsf L} = C_{n}^{-1}C_{n-1} $.
Consequently,
\begin{align*}
p_{\mathsf L,n}^1 &= - \sum_{k=0}^{n-1} \beta_k^{\mathsf L} , &
p_{\mathsf L,n}^2 &=
\sum_{i,j=0}^{n-1} \beta_i^{\mathsf L} \beta_j^{\mathsf L} - \sum_{k=0}^{n-1}\gamma_k^{\mathsf L}.
\end{align*}

In the same manner, from the three term recurrence relation for $\{ Q_n^{\mathsf L} \}_{n \in \mathbb N}$ we deduce that
$q_{\mathsf L,n}^1 - q_{\mathsf L,n-1}^1 = \beta_n^{\mathsf R}:=C_n \beta_n^{\mathsf L} C_n^{-1}$ and $q_{\mathsf L,n}^2 - q_{\mathsf L,n-1}^2 = \beta_n^{\mathsf R} q_{\mathsf L,n}^1 + \gamma_n^{\mathsf R}$,
where $\gamma_n^{\mathsf R} = C_n C_{n+1}^{-1}$.

If we consider that $W = W^{\mathsf L}$ and $ W^{\mathsf R} = I_N$, and use the representation for $ \{ P_n^{\mathsf L} \}_{n \in \N}$ and $\{ Q_n^{\mathsf L} \}_{n \in \N}$ in $z$ powers, the $(1,2)$ and $(2,2)$ entries in ~\eqref{eq:firstodeYnL} read
\begin{align*}
& (2n+1) I_N + A_{\mathsf L} 
+ C_{\mathsf L} (\gamma_{n+1}^{\mathsf L} + \gamma_{n}^{\mathsf L} + (\beta_{n}^{\mathsf L})^2) + B_{\mathsf L} \beta_{n}^{\mathsf L} 
= [p_{\mathsf L,n}^1, C_{\mathsf L}] p_{\mathsf L,n+1}^1
- [p_{\mathsf L,n}^2, C_{\mathsf L}] 
- [p_{\mathsf L,n}^1, B_{\mathsf L}], & \\
& \beta_n^{\mathsf L} = \gamma_{n}^{\mathsf L} \big( C_{\mathsf L}(\beta_n^{\mathsf L} + \beta_{n-1}^{\mathsf L}) + [p_{\mathsf L,n-1}^1, C_{\mathsf L}] + B_{\mathsf L} \big) -
\big( C_{\mathsf L}(\beta_n^{\mathsf L} + \beta_{n+1}^{\mathsf L}) + [p_{\mathsf L,n}^1, C_{\mathsf L}] + B_{\mathsf L} \big) \gamma_{n+1}^{\mathsf L} . 
\end{align*}
We can write these equations as follows
\begin{align}\label{eq:dPIV_with_B_1}
\begin{multlined}[t][.9\textwidth]
(2n+1) I_N + A_{\mathsf L} 
+ C_{\mathsf L} (\gamma_{n+1}^{\mathsf L} + \gamma_{n}^{\mathsf L} ))
+(C_{\mathsf L} \beta_{n}^{\mathsf L}+ B_{\mathsf L}) \beta_{n}^{\mathsf L} 
 \\
= 
 \Big[ \sum_{k=0}^{n-1} \beta_k^{\mathsf L}, C_{\mathsf L}\Big] \sum_{k=0}^{n} \beta_k^{\mathsf L}
- \Big[\sum_{i,j=0}^{n-1} \beta_i^{\mathsf L} \beta_j^{\mathsf L} - \sum_{k=0}^{n-1}\gamma_k^{\mathsf L}
, C_{\mathsf L}\Big] - \Big[ \sum_{k=0}^{n-1} \beta_k^{\mathsf L}, B_{\mathsf L}\Big],\end{multlined} \\
\label{eq:dPIV_with_B_2}
\begin{multlined}[t][.9\textwidth]
\beta_n^{\mathsf L}-\gamma_{n}^{\mathsf L} \big( C_{\mathsf L}(\beta_n^{\mathsf L} + \beta_{n-1}^{\mathsf L}) + B_{\mathsf L} \big) + \big( C_{\mathsf L}(\beta_n^{\mathsf L} + \beta_{n+1}^{\mathsf L}) + B_{\mathsf L} \big) \gamma_{n+1}^{\mathsf L} \\= -\gamma_{n}^{\mathsf L} \Big[ \sum_{k=0}^{n-1} \beta_k^{\mathsf L} , C_{\mathsf L}\Big] +
\Big[-\sum_{k=0}^{n-1} \beta_k^{\mathsf L} , C_{\mathsf L}\Big] \gamma_{n+1}^{\mathsf L}.
\end{multlined}
\end{align}

We will show now that this system contains a noncommutative version of an instance of discrete Painlev\'e~IV equation, as happens in the analogous case for the scalar scenario.

We see, on the \emph{r.h.s.} of the nonlinear discrete equations \eqref{eq:dPIV_with_B_1} and \eqref{eq:dPIV_with_B_2} nonlocal terms (sums) in the recursion coefficients $\beta_n^{\mathsf L}$ and $\gamma_n^{\mathsf L}$, all of them inside commutators. 
These nonlocal terms vanish whenever the three matrices $\{ A_{\mathsf L}, B_{\mathsf L}, C_{\mathsf L}\}$ conform an Abelian set. Moreover, $\{ A_{\mathsf L}, B_{\mathsf L}, C_{\mathsf L},\beta_{n}^{\mathsf L}, \gamma_{n}^{\mathsf L}\}$ is also an Abelian set. In this commutative setting we have
\begin{gather*}
(2n+1) I_N + A_{\mathsf L} 
+ C_{\mathsf L} (\gamma_{n+1}^{\mathsf L} + \gamma_{n}^{\mathsf L} ))
+(C_{\mathsf L} \beta_{n}^{\mathsf L}+ B_{\mathsf L}) \beta_{n}^{\mathsf L}= 0_N,\\
\beta_n^{\mathsf L}-\gamma_{n}^{\mathsf L} \big( C_{\mathsf L}(\beta_n^{\mathsf L} + \beta_{n-1}^{\mathsf L}) + B_{\mathsf L} \big) + \big( C_{\mathsf L}(\beta_n^{\mathsf L} + \beta_{n+1}^{\mathsf L}) + B_{\mathsf L} \big) \gamma_{n+1}^{\mathsf L} =0_N.
\end{gather*}

In terms of
$
\displaystyle 
\xi_n := \frac{ A_{\mathsf L} }{2}+ n I_N+C_{\mathsf L} \gamma_n$
and $ \mu_n := C_{\mathsf L} \beta_n ^{\mathsf L} +B_{\mathsf L} $
the above equations are
\begin{align*}
\beta^{\mathsf L}_n \mu_n& = - (\xi_n + \xi_{n+1}) , 
 &
\beta^{\mathsf L}_n (\xi_n -\xi_{n+1}) &= -\gamma_n \mu_{n-1}
+ \gamma_{n+1} \mu_{n+1}.
\end{align*}

Now, we multiply the second equation by $\displaystyle \mu_n$ and taking into account the first one we arrive~to
\begin{align*}
- (\xi_n + \xi_{n+1})(\xi_n - \xi_{n+1})
= -\gamma_n \mu_{n-1} \mu_{n} + \gamma_{n+1} \mu_{n} \mu_{n+1}
\end{align*}
and so
\begin{align*}
 \xi_{n+1}^2 - \xi_n^2
= \gamma_{n+1} \mu_{n} \mu_{n+1} -\gamma_n \mu_{n-1} \mu_{n} .
\end{align*}
Hence,
\begin{align}
\label{eq:painleve4_van_assche}
\xi_{n+1}^2 - \xi_0^2 = \gamma_{n+1} \mu_{n} \mu_{n+1} 
&& \text{and} &&
\beta^{\mathsf L}_n \mu_n& = - (\xi_n + \xi_{n+1})
\end{align}
coincide to the ones 
presented in~\cite{boelen_Van_Assche} as discrete Painlev\'e~IV (dPIV) equation.
In fact, taking $\nu_n = \mu_n^{-1}$ we finally arrive to
\begin{align*}
 \nu_{n} \nu_{n+1} 
 = \frac{C_{\mathsf L}\big(\xi_{n+1} - A_{\mathsf L}/2 - nI_N\big)}{\xi_{n+1}^2 - \xi_0^2}
&& \text{and} &&
\xi_n + \xi_{n+1}& = 
\big(C_{\mathsf L}^{-1} B_{\mathsf L} -C_{\mathsf L}^{-1} \nu_n^{-1} \Big) \nu_n^{-1} .
\end{align*}

If we take $B_{\mathsf L} = 0$ in~\eqref{eq:painleve4_van_assche} then 
$\mu_n = C_{\mathsf L} \beta^{\mathsf L}_n$, and so
\begin{align*}
(\beta^{\mathsf L}_n)^2 C_{\mathsf L} = - (\xi_n + \xi_{n+1}) .
\end{align*}
Now, taking square in the first equation in~\eqref{eq:painleve4_van_assche} 
we~get
\begin{align*}
\big(\xi_n + \xi_{n+1}\big)\big(\xi_{n+1} + \xi_{n+2}\big)
= \Big( \big(\xi_{n+1} - A_{\mathsf L}/2 - nI_N\big)^{-1} \big(\xi_{n+1}^2 - \xi_0^2 \big) \Big)^2 ,
\end{align*}
which is an instance of dPIV by Grammaticos, Hietarinta, and Ramani~(cf.~\cite{Grammaticos_Hietarinta_Ramani}).

Thus, \eqref{eq:dPIV_with_B_1} and \eqref{eq:dPIV_with_B_2} for $B_\mathsf L=0_N$ may be considered as non-Abelian extension of this instance of dPIV.

\begin{teo}[\textbf{Non-Abelian extension of the dPIV}]
	When $B_\mathsf L=0_N$, the following nonlocal nonlinear non-Abelian system for the recursion coefficients is fulfilled
\begin{align*}
 & \phantom{ola} \begin{multlined}[t][0.9\textwidth]
 (2n+1) I_N + A_{\mathsf L} 
	+ C_{\mathsf L} (\gamma_{n+1}^{\mathsf L} + \gamma_{n}^{\mathsf L} ))
	+C_{\mathsf L}( \beta_{n}^{\mathsf L})^2 
	\\ = 
	\Big[ \sum_{k=0}^{n-1} \beta_k^{\mathsf L}, C_{\mathsf L}\Big] \sum_{k=0}^{n} \beta_k^{\mathsf L}
	- \Big[\sum_{i,j=0}^{n-1} \beta_i^{\mathsf L} \beta_j^{\mathsf L} - \sum_{k=0}^{n-1}\gamma_k^{\mathsf L}
	, C_{\mathsf L}\Big], 
\end{multlined} 
    \\
 & \phantom{ol} \beta_n^{\mathsf L}-\gamma_{n}^{\mathsf L} \big( C_{\mathsf L}(\beta_n^{\mathsf L} + \beta_{n-1}^{\mathsf L}) \big) + \big( C_{\mathsf L}(\beta_n^{\mathsf L} + \beta_{n+1}^{\mathsf L}) \big) \gamma_{n+1}^{\mathsf L} =
	-\gamma_{n}^{\mathsf L} \Big[ \sum_{k=0}^{n-1} \beta_k^{\mathsf L} , C_{\mathsf L}\Big] +
	\Big[-\sum_{k=0}^{n-1} \beta_k^{\mathsf L} , C_{\mathsf L}\Big] \gamma_{n+1}^{\mathsf L}. 
	\end{align*}
Moreover, this system reduces in the commutative context to the standard dPIV equation.
\end{teo}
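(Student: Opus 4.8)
Both displayed identities are nothing but equations~\eqref{eq:dPIV_with_B_1} and~\eqref{eq:dPIV_with_B_2} evaluated at $B_{\mathsf L}=0_N$, so the plan is to recall how the latter arise and then specialise. I would take $W=W^{\mathsf L}$ and $W^{\mathsf R}=I_N$, so that $z(W^{\mathsf L})'(z)=(A_{\mathsf L}+B_{\mathsf L}z+C_{\mathsf L}z^{2})W^{\mathsf L}(z)$ and $h^{\mathsf R}=0$, and use the explicit form of the structure matrix $\tilde M^{\mathsf L}_n$ displayed above. Reading off the $(1,2)$ and $(2,2)$ blocks of the first order differential system~\eqref{eq:firstodeYnL} for $Y^{\mathsf L}_n$, after expanding $P^{\mathsf L}_n$ and $Q^{\mathsf L}_n$ in powers of $z$ and matching coefficients of each power of $z$, and after eliminating the second-kind coefficients through $p^{1}_{\mathsf R,n}=-q^{1}_{\mathsf L,n-1}$, $p^{1}_{\mathsf L,n}=-q^{1}_{\mathsf R,n-1}$ and substituting $p^{1}_{\mathsf L,n}=-\sum_{k=0}^{n-1}\beta^{\mathsf L}_k$, $p^{2}_{\mathsf L,n}=\sum_{i,j=0}^{n-1}\beta^{\mathsf L}_i\beta^{\mathsf L}_j-\sum_{k=0}^{n-1}\gamma^{\mathsf L}_k$ (which follow by iterating the three term recurrence), one recovers precisely~\eqref{eq:dPIV_with_B_1}--\eqref{eq:dPIV_with_B_2}. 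Setting $B_{\mathsf L}=0_N$ there yields the two identities of the statement verbatim: the term $(C_{\mathsf L}\beta^{\mathsf L}_n+B_{\mathsf L})\beta^{\mathsf L}_n$ becomes $C_{\mathsf L}(\beta^{\mathsf L}_n)^{2}$ and the commutators involving $B_{\mathsf L}$ disappear.

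For the reduction I would pass to a commutative setting. If $\{A_{\mathsf L},B_{\mathsf L},C_{\mathsf L}\}$ is an Abelian set, then all the commutators on the right-hand sides vanish and $\{A_{\mathsf L},B_{\mathsf L},C_{\mathsf L},\beta^{\mathsf L}_n,\gamma^{\mathsf L}_n\}$ is Abelian as well, so the system collapses to $\beta^{\mathsf L}_n\mu_n=-(\xi_n+\xi_{n+1})$ and $\beta^{\mathsf L}_n(\xi_n-\xi_{n+1})=-\gamma_n\mu_{n-1}+\gamma_{n+1}\mu_{n+1}$, with $\xi_n:=A_{\mathsf L}/2+nI_N+C_{\mathsf L}\gamma_n$ and $\mu_n:=C_{\mathsf L}\beta^{\mathsf L}_n+B_{\mathsf L}$. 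Multiplying the second relation by $\mu_n$ and inserting the first turns it into $\xi_{n+1}^{2}-\xi_n^{2}=\gamma_{n+1}\mu_n\mu_{n+1}-\gamma_n\mu_{n-1}\mu_n$, which telescopes (using $\gamma_0=0$) to $\xi_{n+1}^{2}-\xi_0^{2}=\gamma_{n+1}\mu_n\mu_{n+1}$; together with $\beta^{\mathsf L}_n\mu_n=-(\xi_n+\xi_{n+1})$ this is exactly~\eqref{eq:painleve4_van_assche}, the discrete Painlev\'e~IV equation of~\cite{boelen_Van_Assche}. With $B_{\mathsf L}=0$ one has $\mu_n=C_{\mathsf L}\beta^{\mathsf L}_n$, hence $(\beta^{\mathsf L}_n)^{2}C_{\mathsf L}=-(\xi_n+\xi_{n+1})$, and squaring the first equation of~\eqref{eq:painleve4_van_assche} produces $(\xi_n+\xi_{n+1})(\xi_{n+1}+\xi_{n+2})=((\xi_{n+1}-A_{\mathsf L}/2-nI_N)^{-1}(\xi_{n+1}^{2}-\xi_0^{2}))^{2}$, the instance of dPIV of Grammaticos, Hietarinta and Ramani~\cite{Grammaticos_Hietarinta_Ramani}. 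This is the claimed commutative reduction.

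The only genuinely delicate point is the bookkeeping in the first paragraph: one must match the coefficient of each power of $z$ in the $(1,2)$ and $(2,2)$ entries of~\eqref{eq:firstodeYnL}, carefully tracking the second-kind coefficients $q^{j}_{\mathsf L,n}$ and eliminating them via the recurrence relations. Once this is done, the passage to $B_{\mathsf L}=0_N$ and the Abelian specialisation are purely algebraic.
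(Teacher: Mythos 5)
Your proposal is correct and follows essentially the same route as the paper: the paper proves this theorem by the derivation immediately preceding its statement, namely reading off the $(1,2)$ and $(2,2)$ entries of~\eqref{eq:firstodeYnL} with the explicit $\tilde M^{\mathsf L}_n$, rewriting via $p^{1}_{\mathsf L,n}=-\sum_{k=0}^{n-1}\beta^{\mathsf L}_k$ and $p^{2}_{\mathsf L,n}=\sum_{i,j=0}^{n-1}\beta^{\mathsf L}_i\beta^{\mathsf L}_j-\sum_{k=0}^{n-1}\gamma^{\mathsf L}_k$ to get~\eqref{eq:dPIV_with_B_1}--\eqref{eq:dPIV_with_B_2}, setting $B_{\mathsf L}=0_N$, and then performing the same commutative reduction through $\xi_n$, $\mu_n$ to the Boelen--Van Assche and Grammaticos--Hietarinta--Ramani forms of dPIV. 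Your bookkeeping and telescoping steps match the paper's argument.
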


\end{document}